\newcommand{\beq}{\begin{equation}}
\newcommand{\eeq}{\end{equation}}
\newcommand{\beqs}{\begin{equation*}}
\newcommand{\eeqs}{\end{equation*}}
\newcommand{\ba}{\begin{array}}
\newcommand{\ea}{\end{array}}
\newcommand{\beas}{\begin{eqnarray*}}
\newcommand{\eeas}{\end{eqnarray*}}
\newcommand{\bea}{\begin{eqnarray}}
\newcommand{\eea}{\end{eqnarray}}
\newcommand{\bal}{\begin{align}}
\newcommand{\eal}{\end{align}}
\newcommand{\bals}{\begin{align*}}
\newcommand{\eals}{\end{align*}}
\newcommand{\al}{\alpha}
\newcommand{\R}{\ensuremath{\mathbb R}}
\newcommand{\C}{\ensuremath{\mathbb C}}
\newcommand{\N}{\ensuremath{\mathbb N}}
\newcommand{\Z}{\ensuremath{\mathbb Z}}
\newcommand{\bigo}{\mathcal O}
\newcommand{\inprod}[1]{\langle{#1}\rangle}
\newcommand{\doubleinprod}[1]{\langle\!\langle{#1}\rangle\!\rangle}
\newcommand{\bds}{\begin{displaystyle}}
\newcommand{\eds}{\end{displaystyle}}
\newcommand{\goodt}{\mathcal T}
\def\eqdef{\stackrel{\rm def}{=}}
\def\Oeq{\stackrel{\mathcal O}{=}}
\def\simP{\stackrel{\Psi}{\sim}}
\def\dsim#1{\underset{#1}{\sim}}
\newcommand{\bvec}[1]{\mathbf{#1}}
\def\vecx{\bvec x}
\def\vece{\bvec e}
\def\vecu{\bvec u}
\def\vecv{\bvec v}
\def\vecx{\bvec x}
\def\vecw{\bvec w}
\def\veck{\bvec k}
\def\varep{\varepsilon}
\def\ddt{\frac{d}{dt}}
\newtheorem{theorem}{Theorem}[section]
\newtheorem{lemma}[theorem]{Lemma}
\newtheorem{corollary}[theorem]{Corollary}
\newtheorem{proposition}[theorem]{Proposition}
\newtheorem{definition}[theorem]{Definition}
\newtheorem{assumption}[theorem]{Assumption}
\newtheorem{condition}[theorem]{Condition}
\theoremstyle{remark}
\newtheorem{remark}[theorem]{\bf{Remark}}
\newtheorem{example}[theorem]{\bf{Example}}
\newtheorem*{notation}{\bf{Notation}}
\def\mD{\mathcal D}
\numberwithin{equation}{section}
\title{\textbf{Asymptotic expansions in a general system of decaying functions for solutions of the Navier-Stokes equations}}
\author{Dat Cao and Luan Hoang}
\date{\today}
\begin{document}
\maketitle

\begin{center}
Department of Mathematics and Statistics, Texas Tech University\\
Box 41042, Lubbock, TX 79409-1042, U.S.A.\\
Email addresses: \texttt{dat.cao@ttu.edu, luan.hoang@ttu.edu}
\end{center}

\begin{abstract}
We study the long-time dynamics of the Navier-Stokes equations in the three-dimensional periodic domains with a   body force decaying in time. We introduce appropriate systems of decaying functions and corresponding asymptotic expansions in those systems. We prove that if the force has a large-time asymptotic expansion in Gevrey-Sobolev spaces in such a general system, then any Leray-Hopf weak solution admits an asymptotic expansion of the same type. This expansion is uniquely determined by the force, and independent of the solutions. 
Various applications of the abstract results are provided which particularly include the previously obtained expansions for the solutions in case of power decay, as well as the new expansions in case of the  logarithmic and iterated logarithmic decay.\end{abstract}
\tableofcontents

\section{Introduction}\label{intro}
We study the long-time behavior of viscous, incompressible fluid flows in  space $\R^3$.
First, we recall the Navier-Stokes equations (NSE) that describe the fluid dynamics.

Let $\vecx\in \R^3$ and $t\in\R$  denote the space and time variables, respectively.
Let the (kinematic) viscosity be denoted by $\nu>0$, the  velocity vector field by $\vecu(\vecx,t)\in\R^3$, the pressure by $p(\vecx,t)\in\R$, and the body force by  $\mathbf f(\vecx,t)\in\R^3$. The NSE  are 
\begin{align}\label{nse}
\begin{split}
&\bds \frac{\partial \vecu}{\partial t}\eds  + (\vecu\cdot\nabla)\vecu -\nu\Delta \vecu = -\nabla p+\mathbf f \quad\text{on }\R^3\times(0,\infty),\\
&\textrm{div } \vecu = 0  \quad\text{on }\R^3\times(0,\infty).
\end{split}
\end{align}

The initial condition is 
\beq\label{ini}
\vecu(\vecx,0) = \vecu^0(\vecx),
\eeq 
where  $\vecu^0(\vecx)$ is a given divergence-free vector field. 
 
We avoid the unbounded domains and the boundary conditions by considering only force $\mathbf f(\vecx,t)$ and solutions $(\vecu(\vecx, t),p(\vecx, t))$ that are $L$-periodic for some $L>0$.
Hereafter, a function $g(\vecx)$ is said to be $L$-periodic if
\beqs
g(\vecx+L\vece_j)=g(\vecx)\quad \textrm{for all}\quad \vecx\in \R^3,\ j=1,2,3,\eeqs
where  $\{\vece_1,\vece_2,\vece_3\}$ is the standard basis of $\R^3$, and is said to have zero average over the domain $\Omega=(-L/2, L/2)^3$ if 
\beqs
\int_\Omega g(\vecx)d\vecx=0.
\eeqs

By using a particular Galilean transformation, see details in, e.g., \cite{HM2}, we can also assume, for all $t\ge 0$, that $\mathbf f(\vecx,t)$ and $\vecu(\vecx,t)$,  have zero averages over the domain $\Omega$.
In light of the Leray-Helmholtz decomposition, and for the sake of convenience, we also assume that $\mathbf f(\vecx,t)$ is divergence-free for all $t\ge 0$. 

By rescaling the variables $\vecx$ and $t$, we assume throughout, without loss of generality, that  $L=2\pi$ and $\nu =1$.

Throughout the paper, we use the following notation  
$$u(t)=\vecu(\cdot,t),\  f(t)=\mathbf f(\cdot,t),\  u^0=\vecu^0(\cdot),$$ 
which are function-valued.

\medskip
In the case of potential force, that is, $\mathbf f(\vecx,t)=-\nabla \phi(\vecx,t)$, for some scalar function $\phi$, it is 
Foias and Saut proved in  \cite{FS87}  that  any  non-trivial, regular solution
$u(t)$ in bounded or periodic domains admits an  asymptotic expansion (as $t\to\infty$)
\beq \label{expand}
u(t) \sim \sum_{n=1}^\infty q_n(t)e^{-nt}
\eeq
in Sobolev spaces $H^m(\Omega)^3$, for all $m\ge 0$.
The interested reader is referred to \cite{DE1968,FS84a} for early results on the solutions' asymptotic behavior,
\cite{FS83,FS84a,FS84b,FS87,FS91} for associated normalization map and invariant nonlinear manifolds, 
\cite{FHOZ1,FHOZ2,FHS1} for the corresponding Poincar\'e-Dulac normal form,
\cite{FHN1,FHN2} for their applications to analysis of helicity, statistical solutions, and decaying turbulence.
The recent paper \cite{FHS2} is a survey on the subject. 

In case of periodic domains, it was then improved in \cite{HM1} that the expansion holds in any Gevrey spaces $G_{\alpha,\sigma}$, see Section \ref{Prelim} for details. When the force $f$ is not potential, the asymptotic expansion of Leray-Hopf weak solutions is established in \cite{HM2} for an exponentially decaying force: if the force has an asymptotic expansion 
\beq\label{fexp0}
f(t)\sim \sum_{n=1}^\infty p_n(t)e^{-nt},
\eeq
then $u(t)$ has an asymptotic expansion of type \eqref{expand}.

The case of power-decaying forces is treated in \cite{CaH1}: if 
\beq\label{fpow}
f(t)\sim \sum_{n=1}^\infty \phi_n t^{-\gamma_n},
\eeq
then all Leray-Hopf weak solutions $u(t)$ amid the same expansion 
\beq\label{upow}
u(t)\sim \sum_{n=1}^\infty \xi_n t^{-\mu_n}.
\eeq
Above $\phi_n$'s and $\xi_n$'s belong to some Gevrey-Sobolev space. The meanings of the expansions \eqref{expand}, \eqref{fexp0}, \eqref{fpow}, \eqref{upow} are specified precisely in the cited papers.

The current paper aims to develop the results in \cite{CaH1} to cover a very large class of forces. For example, we will prove that if
\beqs
f(t)\sim \sum_{n=1}^\infty \phi_n (\ln t)^{-\gamma_n},\text{ or , }
f(t)\sim \sum_{n=1}^\infty \phi_n (\ln(\ln t))^{-\gamma_n},
\eeqs
then 
\beqs
u(t)\sim \sum_{n=1}^\infty \xi_n (\ln t)^{-\mu_n}, \text{ or respectively, }
u(t)\sim \sum_{n=1}^\infty \xi_n (\ln(\ln t))^{-\mu_n}. 
\eeqs

In fact, we obtain a much more general result which is described very roughly here.
Let $(\psi_n)_{n=1}^\infty$ be a sequence of time decaying functions with $\psi_{n+1}(t)$ decays to zero, as $t\to\infty$, much faster than $\psi_n(t)$. The functions $\psi_n$'s are assumed to satisfy a certain set of conditions.

Suppose  there exist $\alpha\ge 1/2$ and $\sigma\ge 0$ such that 
\beq\label{fpol}
f(t)\sim \sum_{n=1}^\infty \phi_n\psi_n(t)\quad\text{in } G_{\alpha,\sigma}.
\eeq

We will prove that any Leray-Hopf weak solution $u(t)$ will admit an expansion
\beq\label{upol}
u(t)\sim \sum_{n=1}^\infty \xi_n \psi_n(t) \quad\text{in } G_{\alpha,\sigma},
\eeq
where $\xi_n$'s are explicitly determined by $\phi_n$'s. The meaning of the expansions \eqref{fpol} and \eqref{upol} is  more sophisticated than \eqref{expand}--\eqref{upow}, thanks to their generality, and will be made clear later in the paper.

The paper is organized as follows.
Section \ref{Prelim} reviews the functional setting for the NSE which is suitable for studying the solutions' dynamics in time.
It also recalls basic inequalities for the Stokes operator and the bilinear form in the NSE.
In Section \ref{estsec}, we  establish the long-time estimates for solutions of both linearized NSE (subsection \ref{linearNSE}) and the NSE (subsection \ref{LHdecay}) with very general decaying forces. The results for the linearized NSE play a key role in  improving the long-time estimates for the solutions of the NSE. 
Having their own merits, these estimates are also crucial to the proofs in Sections \ref{expsec} and \ref{discrtsec}.
Section \ref{syssec} introduces the definitions of systems of decaying functions in time and the asymptotic expansions in those systems.
In Definition \ref{premsys}, we aim to balance between the generality, such as in Definition \ref{sys0}, and the technical requirements.
Condition \ref{sysdef} is particularly emphasized on applications to ordinary and partial differential equations with quadratic or integral power nonlinearity. Condition \ref{weaksys} is focused on functions which are larger than the exponential ones.
We state and prove elementary properties for these systems and expansions. 
In Section \ref{expsec}, we  obtain in Theorem \ref{mainthm} the expansions in Gevrey-Sobolev spaces for all Leray-Hopf solutions of the NSE, when a continuum system of decaying functions is available as the expansions' basis. The result gives precise meanings to the above expansions \eqref{fpol} and \eqref{upol}.
A version of finite sum asymptotic approximations is proved in Theorem \ref{finthm1}.
It is suitable for a force that has limited information about their long-time behavior. 
In Section \ref{discrtsec}, we study the situation when the discrete system of functions for expansions cannot be embedded directly into a continuum system as in Section \ref{expsec}. However, by using a continuum background system, we can still obtain in Theorem \ref{discretethm} the asymptotic expansions for solutions of the NSE. An asymptotic approximation result for the discrete system is similarly obtained in Theorem \ref{finthm2}.
%
Section \ref{examsec} provides many applications of the abstract results in sections \ref{expsec} and \ref{discrtsec}. 
They  consist of the recovery of the power decay case previously established in \cite{CaH1}, see subsection \ref{ss1},  as well as the new logarithmic and iterated logarithmic decay cases, see Theorem \ref{logcase} and Corollary \ref{purelog}. 
Examples \ref{sin-eg} and \ref{tan-eg} demonstrate some asymptotic  expansions with trigonometric functions.
More complicated expansions are presented in Propositions  \ref{rootcase} and \ref{productcase}, particularly, the latter one can only be achieved by using of the background systems developed in Section \ref{discrtsec}. 
%
Appendix \ref{apA} contains some criteria for a convergent series of functions to have corresponding asymptotic expansions of the types  specified in Sections \ref{syssec} and \ref{discrtsec}.

\section{Functional setting and basic facts for the NSE}\label{Prelim}

We recall the standard functional setting for the NSE, see e.g. \cite{CFbook, TemamAMSbook,TemamSIAMbook, FMRTbook}, and some basic inequalities and estimates.

Let $L^2(\Omega)$ and $H^m(\Omega)=W^{m,2}(\Omega)$, for integers $m\ge 0$, denote the standard Lebesgue and Sobolev spaces on $\Omega$.
The standard inner product and norm in $L^2(\Omega)^3$ are denoted by $\inprod{\cdot,\cdot}$ and $|\cdot|$, respectively.
(We warn that this  notation  $|\cdot|$ also denotes the Euclidean norm in $\R^n$ and $\C^n$, for any $n\in\N$, but its meaning will be clear based on the context.)

Let $\mathcal{V}$ be the set of all $2\pi$-periodic trigonometric polynomial vector fields which are divergence-free and  have zero average over $\Omega$.  
Define
$$H, \text{ resp. } V\ =\text{ closure of }\mathcal{V} \text{ in }
L^2(\Omega)^3, \text{ resp. } H^1(\Omega)^3.$$
Notice that each element of $H$ is divergence-free and has zero average over $\Omega$,
 and each element of $V$ is $2\pi$-periodic.

We use the following embeddings and identification
$$V\subset H=H'\subset V',$$ 
where each space is dense in the next one, and the embeddings are compact.

Let $\mathcal{P}$ denote the orthogonal (Leray) projection in $L^2(\Omega)^3$ onto $H$.

The Stokes operator $A$ is a bounded linear mapping from $V$ to its dual space $V'$ defined by  
\beqs
\inprod{A\vecu,\vecv}_{V',V}=
\doubleinprod{\vecu,\vecv}
\eqdef \sum_{j=1}^3 \inprod{ \frac{\partial \vecu}{\partial x_j} , \frac{\partial \vecv}{\partial x_j} }\quad \text{for all } \vecu,\vecv\in V.
\eeqs

As an unbounded operator on $H$, the operator $A$ has the domain $\mD(A)=V\cap H^2(\Omega)^3$, and, under  the current consideration of periodicity conditions, 
\beqs A\vecu = - \mathcal{P}\Delta \vecu=-\Delta \vecu\in H \quad \textrm{for all}\quad \vecu\in\mD(A).
\eeqs

The spectrum  of $A$ is known to be 
$$\sigma(A)=\{|\veck|^2: \ \veck\in\Z^3, \veck\ne \mathbf 0\},$$
and each $\lambda\in \sigma(A)$ is an eigenvalue.
Note that $\sigma(A)\subset \N$ and $1\in\sigma(A)$, hence, the additive semigroup generated by $\sigma(A)$ is $\N$.

For $n\in\sigma (A)$, we denote by  $R_n$ the orthogonal projection in $H$ on the eigenspace of $A$ corresponding to $n$,
and set $$P_n=\sum_{j\in \sigma(A),j\le n}R_j.$$ 
Note that each vector space $P_nH$ is finite dimensional.

\medskip 
For $\alpha,\sigma \in \R$ and  $\vecu=\sum_{\veck\ne \mathbf 0} 
\widehat \vecu(\veck)e^{i\veck\cdot \vecx}$, define
$$A^\alpha \vecu=\sum_{\veck\ne \mathbf 0} |\veck|^{2\alpha} \widehat \vecu(\veck)e^{i\veck\cdot 
\vecx},\quad e^{\sigma A^{1/2}} \vecu=\sum_{\veck\ne \mathbf 0} e^{\sigma 
|\veck|} \widehat \vecu(\veck)e^{i\veck\cdot 
\vecx},$$
and, hence,
$$A^\alpha e^{\sigma A^{1/2}} \vecu=e^{\sigma A^{1/2}}A^\alpha  \vecu=\sum_{\veck\ne \mathbf 0} |\veck|^{2\alpha}e^{\sigma 
|\veck|} \widehat \vecu(\veck)e^{i\veck\cdot 
\vecx}.$$

The  Gevrey spaces are defined by
\beqs
G_{\alpha,\sigma}=\mD(A^\alpha e^{\sigma A^{1/2}} )\eqdef \{ \vecu\in H: |\vecu|_{\alpha,\sigma}\eqdef |A^\alpha 
e^{\sigma A^{1/2}}\vecu|<\infty\},
\eeqs
and, in particular, when $\sigma=0$,  the domain of the fractional operator $A^\al$ is 
\beqs 
\mD(A^\alpha)=G_{\alpha,0}=\{ \vecu\in H: |A^\alpha \vecu|=|\vecu|_{\alpha,0}<\infty\}.
\eeqs

Thanks to the zero-average condition, the norm $|A^{m/2}\vecu|$ is equivalent to $\|\vecu\|_{H^m(\Omega)^3}$ on the space $\mathcal D(A^{m/2})$ for $m=0,1,2,\ldots$

Note that $\mD(A^0)=H$,  $\mD(A^{1/2})=V$, and  $\|\vecu\|\eqdef |\nabla \vecu|$ is equal to $|A^{1/2}\vecu|$ for $\vecu\in V$.
Also, the norms $|\cdot|_{\alpha,\sigma}$ are increasing in $\alpha$, $\sigma$, hence, the spaces $G_{\alpha,\sigma}$ are decreasing in $\alpha$, $\sigma$.

\medskip
Regarding the nonlinear term in the NSE, a bounded linear map $B:V\times V\to V'$ is defined by
\beqs
\inprod{B(\vecu,\vecv),\vecw}_{V',V}=b(\vecu,\vecv,\vecw)\eqdef \int_\Omega ((\vecu\cdot \nabla) \vecv)\cdot \vecw\, d\vecx, \quad \textrm{for all}\quad \vecu,\vecv,\vecw\in V.
\eeqs 
In particular,
\beqs
B(\vecu,\vecv)=\mathcal{P}((\vecu\cdot \nabla) \vecv), \quad \textrm{for all}\quad \vecu,\vecv\in\mD(A).
\eeqs

The problems  \eqref{nse} and \eqref{ini} can now be rewritten in the functional form as
\begin{align}\label{fctnse}
&\frac{du(t)}{dt} + Au(t) +B(u(t),u(t))=f(t) \quad \text{ in } V' \text{ on } (0,\infty),\\
\label{uzero} 
&u(0)=u^0\in H.
\end{align}
(We refer the reader to the books \cite{LadyFlowbook69,CFbook,TemamAMSbook,TemamSIAMbook} for more details.)


The next definition makes precise the meaning of weak solutions of \eqref{fctnse}.

\begin{definition}\label{lhdef}
Let $f\in L^2_{\rm loc}([0,\infty),H)$.
A \emph{Leray-Hopf weak solution} $u(t)$ of \eqref{fctnse} is a mapping from $[0,\infty)$ to $H$ such that 
\beqs
u\in C([0,\infty),H_{\rm w})\cap L^2_{\rm loc}([0,\infty),V),\quad u'\in L^{4/3}_{\rm loc}([0,\infty),V'),
\eeqs
and satisfies 
\beq\label{varform}
\ddt \inprod{u(t),v}+\doubleinprod{u(t),v}+b(u(t),u(t),v)=\inprod{f(t),v}
\eeq
in the distribution sense in $(0,\infty)$, for all $v\in V$, and the energy inequality
\beq\label{Lenergy}
\frac12|u(t)|^2+\int_{t_0}^t \|u(\tau)\|^2d\tau\le \frac12|u(t_0)|^2+\int_{t_0}^t \langle f(\tau),u(\tau)\rangle d\tau
\eeq
holds for $t_0=0$ and almost all $t_0\in(0,\infty)$, and all $t\ge t_0$.  
Here, $H_{\rm w}$ denotes the topological vector space $H$ with the weak topology.
 
A \emph{regular} solution is a Leray-Hopf weak solution that belongs to $C([0,\infty),V)$.

If $t\mapsto u(T+t)$ is a Leray-Hopf weak/regular solution, then we say $u$ is a Leray-Hopf weak/regular solution on $[T,\infty)$.
\end{definition}

It is well-known that a regular solution is unique among all Leray-Hopf weak solutions.

We denote by $\goodt$ the set of $t_0\in[0,\infty)$ such that \eqref{Lenergy} holds for all $t\ge t_0$. Note that $[0,\infty)\setminus \goodt$ has zero measure.

If $u(t)$ is a Leray-Hopf weak solution and $t_0\in\goodt$, then $u(t_0+t)$ is also a Leray-Hopf weak solution.
Assume additionally that there exists a regular solution $w(t)$ with $w(0)=u(t_0)$. Then by the uniqueness of $w(t)$, we have $u(t_0+t)=w(t)$ and, hence, $u(t)$ is a regular solution on $[t_0,\infty)$. 

\medskip
We assume throughout the paper that 

\begin{assumption}\label{A1} The function $f$ belongs to $L^\infty_{\rm loc}([0,\infty),H)$.
\end{assumption}

Under Assumption \ref{A1}, for any $u^0\in H$, there exists a Leray-Hopf weak solution $u(t)$ of \eqref{fctnse} and \eqref{uzero}, see e.g. \cite{FMRTbook}. 
The large-time behavior of $u(t)$ is the focus of our study.
More specific conditions on $f$ will be imposed later. 

We note that, thanks to Remark 1(e) of  \cite{FRT2010}, the Leray-Hopf weak solutions in Definition \ref{lhdef} are the same as the weak solutions defined in \cite[Chapter II, section 7]{FMRTbook}. Hence, according to inequality (A.39) in \cite[Chap. II]{FMRTbook}, we have for any such solution $u(t)$ that
\beq\label{iniener}
|u(t)|^2\le e^{-t}|u(0)|^2 + \int_0^t e^{-(t-\tau)} |f(\tau)|^2d\tau\quad \forall t>0.
\eeq

Below are some inequalities that will be needed in later estimates.
First, for any $\sigma,\alpha>0$, one has
\beq\label{mxe}
\max_{x\ge 0} (x^{\alpha}e^{-\sigma x})=d_0(\alpha,\sigma)\eqdef \Big(\frac{\alpha}{e\sigma }\Big)^{\alpha},
\eeq 
and, hence, 
\beqs\label{mx2}
e^{-\sigma x}=e^{-\sigma (x+1)}e^\sigma\le d_0(\alpha,\sigma)e^\sigma (1+x)^{-\alpha}\quad \forall x\ge 0.
\eeqs

Thanks to \eqref{mxe}, one can verify, for all $\alpha,\sigma>0$, that
\beq\label{als0}
|A^\alpha e^{-\sigma A}v| \le d_0(\alpha,\sigma) |v|\quad \forall v\in H,
\eeq
\beqs
|A^\alpha e^{-\sigma A^{1/2}}v| \le d_0(2\alpha,\sigma) |v|\quad \forall v\in H,
\eeqs 
and, consequently, 
\beq 
\label{als}
|A^\alpha v|=|(A^\alpha e^{-\sigma A^{1/2}})e^{\sigma A^{1/2}}v|\le d_0(2\alpha,\sigma) |e^{\sigma A^{1/2}}v|\quad  \forall v\in G_{0,\sigma}.
\eeq

For the bilinear mapping $B(u,v)$, it follows from its boundedness that there exists a constant $K_*>0$ such that 
\beq\label{Bweak}
\|B(u,v)\|_{V'}\le  K_* \| u\|\, \|v\|\quad\forall u,v\in V.
\eeq

The estimate of $B(u,v)$'s Gevrey norms was initiated by Foias-Temam \cite{FT-Gevrey}.
Here we recall an extended and convenient (though not sharp) version from \cite[Lemma 2.1]{HM1}.  

\medskip
\textit{There exists a constant $K>1$ such that if $\sigma\ge 0$ and $\alpha\ge 1/2$, then 
\beq\label{AalphaB} 
|B(u,v)|_{\alpha ,\sigma }\le K^\alpha  |u|_{\alpha +1/2,\sigma } \, |v|_{\alpha +1/2,\sigma}\quad \forall u,v\in G_{\alpha+1/2,\sigma}.
\eeq
}

\begin{notation}   We make clear the meaning of the ``big oh'' and ``small oh'' notation.
Let $f$ and $g$ be two non-negative functions defined on a neighborhood of infinity (in $\R$).
\begin{itemize}
 \item We write $f(t)=\mathcal O(g(t))$ (implicitly means as $t\to\infty$) if there exist $T,C>0$, such that $f(t)\le Cg(t)$ for all $t\ge T$.

 \item We say $f(t)\Oeq g(t)$, if $f(t)=\mathcal O(g(t))$ and $g(t)=\mathcal O(f(t))$.

 \item We write $f(t)=o(g(t))$ (implicitly means as $t\to\infty$)  if for any $\varep>0$, there exist $T_\varep>0$, such that $f(t)\le \varep g(t)$ for all $t\ge T_\varep$.
\end{itemize}
\end{notation}

Let $u(t)=\mathcal O(f(t))$ and $v(t)=\mathcal O(g(t))$. Then clearly
$(uv)(t)= \mathcal O(g(t)g(t))$, which we simply write as
\beq\label{O1}
\mathcal O(f(t)) \mathcal O(g(t))=\mathcal O(f(t)g(t)), \text{ and particularly, }
f(t) \mathcal O(g(t))=\mathcal O(f(t)g(t)).
\eeq

If $f(t)=\mathcal O(g(t))$, then $(u+v)(t)=\mathcal O(g(t))$, which  we write as
\beq\label{O2}
\mathcal O(f(t))+\mathcal O(g(t))=\mathcal O(g(t)).
\eeq

Note that the identities in \eqref{O1} and \eqref{O2} are only for convenience, and should be read from left to right. 

\section{Large-time estimates}\label{estsec}

This section contains long-time estimates for solutions of the linearized NSE and of the NSE with the force decaying in time.

First, we have a convenient integral estimate which will be used repeatedly later.

\begin{lemma}\label{Flem}
 Let $F$ be a continuous, decreasing function from $[0,\infty)$ to $[0,\infty)$.
For any $\sigma>0$ and $\theta\in(0,1)$, one has  
\beq \label{Fi1}
 \int_0^t e^{-\sigma (t-\tau)}F(\tau)d\tau
 \le \frac1\sigma\Big(F(0)e^{-(1-\theta)\sigma t}+F(\theta t)\Big)\quad\forall t\ge 0.
\eeq
\end{lemma}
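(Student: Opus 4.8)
The plan is to split the integral at the point $\tau = \theta t$ and estimate each piece separately, exploiting the monotonicity of $F$ on each subinterval. Concretely, write
\beqs
\int_0^t e^{-\sigma(t-\tau)}F(\tau)\,d\tau = \int_0^{\theta t} e^{-\sigma(t-\tau)}F(\tau)\,d\tau + \int_{\theta t}^t e^{-\sigma(t-\tau)}F(\tau)\,d\tau.
\eeqs

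For the first integral, on $[0,\theta t]$ we have $\tau \le \theta t$, so $F(\tau) \le F(0)$ since $F$ is decreasing; also $t - \tau \ge t - \theta t = (1-\theta)t$, so $e^{-\sigma(t-\tau)} \le e^{-(1-\theta)\sigma t}$. Pulling these bounds out and integrating the remaining factor, I would use the crude bound $\int_0^{\theta t} d\tau = \theta t$; but that produces a factor $\theta t$ rather than $1/\sigma$. The better route is to keep $e^{-\sigma(t-\tau)}$ inside: bound $F(\tau) \le F(0)$ only, then
\beqs
\int_0^{\theta t} e^{-\sigma(t-\tau)}F(\tau)\,d\tau \le F(0)\int_0^{\theta t} e^{-\sigma(t-\tau)}\,d\tau = \frac{F(0)}{\sigma}\bigl(e^{-\sigma(1-\theta)t} - e^{-\sigma t}\bigr) \le \frac{F(0)}{\sigma}e^{-(1-\theta)\sigma t}.
\eeqs
For the second integral, on $[\theta t, t]$ we have $\tau \ge \theta t$, so $F(\tau) \le F(\theta t)$; then
\beqs
\int_{\theta t}^t e^{-\sigma(t-\tau)}F(\tau)\,d\tau \le F(\theta t)\int_{\theta t}^t e^{-\sigma(t-\tau)}\,d\tau \le F(\theta t)\int_{-\infty}^t e^{-\sigma(t-\tau)}\,d\tau = \frac{F(\theta t)}{\sigma}.
\eeqs
Adding the two estimates gives exactly \eqref{Fi1}.

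There is essentially no obstacle here: this is a clean dyadic-type splitting argument, and the only thing to be careful about is arranging the bookkeeping so that the $e^{-\sigma(t-\tau)}$ kernel is integrated (yielding the $1/\sigma$) rather than the function $F$, and so that the exponent $(1-\theta)\sigma t$ comes out correctly from the first piece. Continuity of $F$ is only needed to ensure the integrals are well-defined; monotonicity (decreasing) is the essential structural hypothesis, used once on each subinterval. The edge case $t = 0$ makes both sides equal to $F(0)$ trivially (the integral is zero, the right side is $\frac1\sigma(F(0) + F(0))$, which is nonnegative), so the inequality holds there as well.
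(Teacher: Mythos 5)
Your proof is correct and follows essentially the same argument as the paper: split the integral at $\tau=\theta t$, bound $F$ by $F(0)$ on the first piece and by $F(\theta t)$ on the second, and integrate the exponential kernel to produce the factor $1/\sigma$. No issues.
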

\begin{proof}
We follow the proof of \cite[Lemma 2.2]{CaH1}. We split
\begin{align*}
\int_0^ t e^{-\sigma(t- \tau)} F(\tau) d\tau= I_1+I_2,
\end{align*}
where $I_1$ is the integral from $0$ to $\theta t$, and $I_2$ is the integral from $\theta t$ to $t$.
Using the monotonicity of $F$, we estimate 
\begin{align*}
I_1&\le F(0) \int_0^{\theta t} e^{-\sigma(t-\tau)} d\tau 
 \le  F(0) \frac{e^{-(1-\theta)\sigma t}}\sigma,\\
I_2&\le F(\theta t)\int_{\theta t}^ t e^{-\sigma(t- \tau)}  d\tau
 \le  F(\theta t) \frac{1}\sigma.
\end{align*}
Thus, we obtain \eqref{Fi1}.
 \end{proof}

\subsection{The linearized NSE}\label{linearNSE}

We establish an explicit estimate for solutions of the linearized NSE in terms of the decaying force.

\begin{theorem}\label{Fode}
Given $\alpha,\sigma\ge 0$, let $\xi\in G_{\alpha,\sigma}$, and $f$ be a function from $(0,\infty)$ to $G_{\alpha,\sigma}$ that satisfies 
  \beq\label{fF} |f(t)|_{\alpha,\sigma}\le MF(t) \quad\text{a.e. in $(0,\infty)$.}\eeq 
where $M$ is a positive constant, and $F$ is a continuous, decreasing function from $[0,\infty)$ to $[0,\infty)$.

Let $w_0\in G_{\alpha,\sigma}$. Suppose  $w\in C([0,\infty),H_{\rm w})\cap L^1_{\rm loc}([0,\infty),V) $, with $w'\in L^1_{\rm loc}([0,\infty),V')$,  is a weak solution of 
 \beqs
 w'=-Aw+\xi+f \text{ in $V'$ on $(0,\infty)$,} \quad w(0)=w_0,
  \eeqs
  i.e., it holds, for all $v\in V$, that
  \beqs
 \ddt \inprod{w,v}=-\doubleinprod{w,v}+\inprod{\xi+f,v} \text{ in the distribution sense on $(0,\infty)$.}
  \eeqs
  
  Then the following statements hold true.
\begin{enumerate}[label=\rm (\roman*)]
 \item $w(t)\in G_{\alpha+1-\varep,\sigma}$ for all $\varep\in(0,1)$ and $t>0$.
 
 \item For any numbers $a,a_0\in (0,1)$ with $a+a_0<1$ and any $\varep\in(0,1)$, there exists a positive constant $C$ depending on $a_0$, $a$, $\varep$, $M$, $F(0)$, $|\xi|_{\alpha,\sigma}$ and $|w_0|_{\alpha,\sigma}$ such that 
\beq\label{wremain3}
|w(t)-A^{-1}\xi|_{\alpha+1-\varep,\sigma}\le C\big(e^{-a_0 t}+F(a t)\big)  \quad \forall t \ge 1. 
\eeq
 
 \item Assume, in addition, that 
 \begin{itemize}
  \item There exist $k_0>0$ and $D_1>0$ such that
 \beq\label{eF}
  e^{-k_0 t}\le D_1 F(t)\quad\forall t\ge 0, \text{ and  }
 \eeq
\item For any $a\in(0,1)$, there exists $D_2=D_{2,a}>0$ such that 
\beq\label{Fa}
F(at)\le D_2 F(t)\quad \forall t\ge 0.
\eeq
 \end{itemize}
Then there exists $C>0$ such that
\beq\label{wremain2}
|w(t)-A^{-1}\xi|_{\alpha+1-\varep,\sigma}\le CF(t)  \quad \forall t \ge 1. 
\eeq
\end{enumerate}
\end{theorem}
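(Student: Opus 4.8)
The plan is to derive an integral (mild) formulation for $w$, split off the stationary part $A^{-1}\xi$, and estimate the remainder via the Gevrey smoothing of the Stokes semigroup together with Lemma \ref{Flem}. First I would establish (i): for $t>0$ the variation-of-constants formula $w(t)=e^{-tA}w_0+\int_0^t e^{-(t-\tau)A}(\xi+f(\tau))\,d\tau$ holds in $H$, and since $|A^{1-\varep}e^{-tA}|_{\mathcal L(H)}\le d_0(1-\varep,t)$ by \eqref{als0} is integrable in $\tau$ near $\tau=t$ (the exponent $1-\varep<1$), each term lands in $G_{\alpha+1-\varep,\sigma}$; applying $e^{\sigma A^{1/2}}A^\alpha$ throughout (it commutes with the semigroup) reduces everything to the scalar weights and gives (i). One subtlety worth a line: the weak solution must be shown to coincide with the mild solution, which follows by testing against eigenfunctions of $A$ and solving the resulting scalar ODEs, using $f\in L^1_{\rm loc}$.

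Next, for (ii), write $w(t)-A^{-1}\xi = e^{-tA}(w_0-A^{-1}\xi)+\int_0^t e^{-(t-\tau)A}f(\tau)\,d\tau - \int_t^\infty e^{-(t-\tau)A}\,d\tau\cdot(\text{correction})$; more cleanly, note $A^{-1}\xi = \int_0^\infty e^{-sA}\xi\,ds$, so $w(t)-A^{-1}\xi = e^{-tA}(w_0-A^{-1}\xi)+\int_0^t e^{-(t-\tau)A}f(\tau)\,d\tau$ after absorbing $\int_0^t e^{-(t-\tau)A}\xi\,d\tau = A^{-1}\xi - e^{-tA}A^{-1}\xi$. Apply $A^{\alpha+1-\varep}e^{\sigma A^{1/2}}$: the first term is bounded by $d_0(1-\varep,(1-a-a_0)t)\,e^{-(a+a_0)t}|w_0-A^{-1}\xi|_{\alpha,\sigma}$, which is $\mathcal O(e^{-a_0 t})$ on $t\ge 1$ once we split the exponent (the $d_0$ factor contributes only a polynomial in $1/t$, harmless for $t\ge 1$ against a slightly larger exponential); for the convolution term, since $1\in\sigma(A)$, we bound $|A^{\alpha+1-\varep}e^{\sigma A^{1/2}}e^{-(t-\tau)A}f(\tau)|\le d_0(1-\varep,\tfrac12(t-\tau))\,e^{-\frac12(t-\tau)}|f(\tau)|_{\alpha,\sigma}$ and then $d_0(1-\varep,\tfrac12(t-\tau))\le C_\varep (t-\tau)^{-(1-\varep)}$, so the integral is $\le C_\varep M\int_0^t (t-\tau)^{-(1-\varep)}e^{-\frac12(t-\tau)}F(\tau)\,d\tau$. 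Splitting this at $\tau=at$ exactly as in Lemma \ref{Flem} (the weight $(t-\tau)^{-(1-\varep)}e^{-\frac12(t-\tau)}$ is integrable and its mass on $[at,t]$ is $\mathcal O(1)$ while on $[0,at]$ it is $\mathcal O(e^{-\frac12(1-a)t}t^{\varep})$) yields the bound $C(e^{-a_0 t}+F(at))$, after possibly shrinking constants; the $e^{-a_0t}$ term absorbs the polynomial-times-exponential leftovers.

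Finally, (iii) is a short corollary: under \eqref{eF} we have $e^{-a_0 t}\le D_1 F(t)$ (with $a_0\le k_0$; if $a_0>k_0$ just note $e^{-a_0t}\le e^{-k_0 t}$), and under \eqref{Fa} we have $F(at)\le D_2 F(t)$, so the right-hand side of \eqref{wremain3} is $\le C(D_1+D_2)F(t)$, giving \eqref{wremain2}.

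The main obstacle is the convolution estimate in (ii): one has to handle the mild singularity $(t-\tau)^{-(1-\varep)}$ at $\tau=t$ simultaneously with the decay of $F$ at $\tau\to\infty$, and carry out the two-region split so that the near-diagonal piece produces $F(at)$ (not $F(0)$) while the far piece is genuinely exponentially small; getting the constant $C$ to depend only on the quantities listed, and making sure the $d_0$ polynomial factors never spoil the exponential rates for $t\ge 1$, requires care but no new ideas beyond Lemma \ref{Flem} and \eqref{mxe}. A secondary point is justifying that the abstract weak solution equals the mild solution under the stated low regularity, which I would dispatch by projecting onto $P_n H$ and passing to the limit.
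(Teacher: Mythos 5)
Your proposal follows essentially the same route as the paper: the (projected) variation-of-constants formula, the Gevrey smoothing estimate \eqref{als0} for the Stokes semigroup, and a two-region split of the convolution integral in the spirit of Lemma \ref{Flem}. Your identity $w(t)-A^{-1}\xi=e^{-tA}(w_0-A^{-1}\xi)+\int_0^t e^{-(t-\tau)A}f(\tau)\,d\tau$ is exactly the paper's formula \eqref{yform} with the first two terms combined, and the paper likewise justifies it by working with $P_Nw$ and passing to the limit.

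Two quantitative steps do not go through as written, though both are easily repaired. First, in the convolution estimate you hard-code the semigroup split as $e^{-(t-\tau)A}=e^{-\frac12(t-\tau)A}\,e^{-\frac12(t-\tau)A}$, so the far piece over $[0,at]$ only produces the rate $e^{-\frac12(1-a)t}$ (times $F(0)$ and harmless polynomial factors). When $a_0\in\big(\tfrac12(1-a),\,1-a\big)$ --- which is permitted under the sole hypothesis $a+a_0<1$ --- this is \emph{not} $\mathcal O(e^{-a_0 t})$ and cannot be absorbed into the claimed bound. You need to keep the split parameter free, $e^{-(t-\tau)A}=e^{-(1-\delta)(t-\tau)A}e^{-\delta(t-\tau)A}$ with $\delta(1-a)>a_0$, which is possible precisely because $a_0/(1-a)<1$; this is the role of the free parameter $\delta$ in the paper's intermediate estimate \eqref{wremain}. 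Second, in part (iii) your parenthetical handling of $e^{-a_0 t}\le D_1F(t)$ is backwards: \eqref{eF} yields this directly only when $a_0\ge k_0$ (so that $e^{-a_0t}\le e^{-k_0t}$), and choosing such an $a_0<1$ is impossible whenever $k_0\ge 1$. In general one must combine \eqref{eF} with \eqref{Fa}: $e^{-a_0 t}=e^{-k_0(a_0t/k_0)}\le D_1F(a_0t/k_0)\le D_1D_{2,a_0/k_0}F(t)$, as in the paper's computation \eqref{eafa}. With these two adjustments your argument is complete and coincides with the paper's proof.
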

 \begin{proof}
(i) This regularity result is the same as \cite[Lemma 2.4(i)]{CaH1} in which we set $f:=\xi+f$.

(ii) First, we state and prove a more technical version of \eqref{wremain3}.

\textit{For any $\varep,\delta,\theta,\theta'\in(0,1)$, there exists $C>0$ depending on $\varep$, $\delta$, $\theta$, $M$, $F(0)$, $|\xi|_{\alpha,\sigma}$ and $|w_0|_{\alpha,\sigma}$ such that 
\beq\label{wremain}
|w(t)-A^{-1}\xi|_{\alpha+1-\varep,\sigma}\le C\big(e^{-(1-\theta')\theta \delta t}+F(\theta'\theta t)\big)  \quad \forall t \ge 1. 
\eeq
}

\textit{Proof of \eqref{wremain}.} We follow the proof of \cite[Lemma 2.3]{CaH1}. 

(a) Let $N\in\sigma(A)$. 
We recall formula (2.19) of \cite{CaH1}:
\beq\label{yform}
  P_N\big (w(t)-A^{-1}\xi\big )=e^{-tA}P_N w_0-A^{-1}e^{-tA}P_N \xi+\int_0^t e^{-(t-\tau)A}P_N f(\tau)d\tau\quad\forall t\ge 0.
\eeq
(This formula was derived by using the equation of $P_N w$ and the variation of constant formula.)
  
Let $\varep\in (0,1)$. 
Applying $A^{1-\varep}$ to both sides of \eqref{yform}, and estimating the $|\cdot|_{\alpha,\sigma}$ norm of the resulting quantities,   we obtain
\beq \label{ynorm}
  \begin{aligned}
  |P_N(w(t)-A^{-1}\xi)|_{\alpha+1-\varep,\sigma}
  &\le |A^{1-\varep}e^{-tA}w_0|_{\alpha,\sigma}+|A^{-\varep}e^{-tA}\xi|_{\alpha,\sigma}\\
  &\quad +\int_0^t |e^{-(t-\tau)A}A^{1-\varep} f(\tau)|_{\alpha,\sigma}d\tau.
    \end{aligned}
\eeq  

Let $\theta,\delta\in(0,1)$ and $t\ge 1$. We estimate each term on the right-hand side of \eqref{ynorm} separately.  
  
(b) Rewriting the first term on the right-hand side of \eqref{ynorm} and applying \eqref{als0} yield
  \begin{align*}
   |A^{1-\varep}e^{-tA}w_0|_{\alpha,\sigma}
   &= |A^{1-\varep}e^{-(1-\delta)tA} (e^{-\delta t A} w_0)|_{\alpha,\sigma}
   \le \Big[ \frac{1-\varep}{e(1-\delta)t}\Big]^{1-\varep}|e^{-\delta tA} w_0|_{\alpha,\sigma}\\
   &\le \Big[ \frac{1-\varep}{e(1-\delta)}\Big]^{1-\varep} e^{-\delta t} |w_0|_{\alpha,\sigma}.
\end{align*}

The second term on the right-hand side of \eqref{ynorm} can be easily estimated by
  \beqs
   |A^{-\varep}e^{-tA}\xi|_{\alpha,\sigma}
\le  | e^{-tA} \xi|_{\alpha,\sigma}
   \le e^{-t} |\xi|_{\alpha,\sigma}. 
\eeqs

(c) Dealing with  the last integral in \eqref{ynorm}, we split it into two integrals
\beqs
\int_0^t |e^{-(t-\tau)A}A^{1-\varep}f(\tau)|_{\alpha,\sigma}d\tau=I_1+I_2,
\eeqs
where $I_1$ is the integral from $0$ to $\theta t$, and $I_2$ from $\theta t$ to $t$.

For $I_1$, we have 
\begin{align*}
 I_1&= \int_0^{\theta t} \Big|e^{-(t-\tau)(1-\delta)A}\Big( e^{-(t-\tau)\delta A} A^{1-\varep} f(\tau)\Big)\Big|_{\alpha,\sigma}d\tau\\
 &\le \int_0^{\theta t} \Big|e^{-(1-\theta)t(1-\delta) A}A^{1-\varep} \Big(e^{-(t-\tau)\delta A}f(\tau)\Big)\Big|_{\alpha,\sigma}d\tau.
\end{align*}

Utilizing \eqref{als0} and then using hypothesis \eqref{fF}, we obtain
\begin{align*}
 I_1&\le \int_0^{\theta t} \Big [\frac{1-\varep}{e(1-\theta)(1-\delta)t}\Big ]^{1-\varep}|e^{-(t-\tau)\delta A}f(\tau)|_{\alpha,\sigma}d\tau\\
  &\le \Big [\frac{1-\varep}{e(1-\theta)(1-\delta)t}\Big ]^{1-\varep} \int_0^{\theta t}  e^{-(t-\tau)\delta} MF(\tau)d\tau.\\
 &\le M\Big [\frac{1-\varep}{e(1-\theta)(1-\delta)}\Big ]^{1-\varep} \int_0^{\theta t}  e^{-\delta (\theta t-\tau)} F(\tau)d\tau.
\end{align*} 

Let $\theta'\in(0,1)$. Then by Lemma \ref{Flem},
\begin{align*}
 I_1
 &\le  M \Big [\frac{1-\varep}{e(1-\theta)(1-\delta)}\Big ]^{1-\varep} \frac1\delta \big(F(0) e^{-(1-\theta')\delta \theta t}+F(\theta'\theta t)\big).
\end{align*}

For $I_2$, we apply \eqref{als0} and use \eqref{fF} to have
\begin{align*}
I_2
&= \int_{\theta t}^t |e^{-(t-\tau)\delta A} (e^{-(t-\tau)(1-\delta)A}A^{1-\varep}f(\tau))|_{\alpha,\sigma}d\tau
\le \int_{\theta t}^t e^{-(t-\tau)\delta} |e^{-(t-\tau)(1-\delta)A}A^{1-\varep}f(\tau)|_{\alpha,\sigma}d\tau\\
&\le \int_{\theta t}^t e^{-\delta (t-\tau)} \Big[\frac{1-\varep}{e(1-\delta)(t-\tau)}\Big]^{1-\varep} |f(\tau)|_{\alpha,\sigma}d\tau 
\le   \Big[\frac{1-\varep}{e(1-\delta)}\Big]^{1-\varep} M F(\theta t) \int_{\theta t}^t \frac{e^{-\delta(t-\tau)}}{(t-\tau)^{1-\varep}}d\tau.
\end{align*}

We estimate the last integral by
\begin{align*}
\int_{\theta t}^t \frac{e^{-\delta(t-\tau)}}{(t-\tau)^{1-\varep}}d\tau&=\int_0^{(1-\theta)t} z^{\varep-1}e^{-\delta z}dz
\le \int_0^{1-\theta} z^{\varep-1}dz
+ (1-\theta)^{\varep-1}\int_{1-\theta}^{(1-\theta)t} e^{-\delta z}dz\\
&\le  \frac{(1-\theta)^\varep}{\varep}+ \frac{(1-\theta)^{\varep-1}}\delta e^{-\delta(1-\theta)}.
\end{align*}

(d) Combining the above calculations, we obtain 
\beq\label{wNrem}
|P_N\big( w(t)-A^{-1}\xi\big)|_{\alpha+1-\varep,\sigma}\le C\big(e^{-(1-\theta')\theta \delta t}+F(\theta'\theta t)\big) \quad \forall t \ge 1,
\eeq
with constant $C$ independent of $N$.
By passing $N\to\infty$ in \eqref{wNrem}, and the fact $A^{-1}\xi\in G_{\alpha+1-\varep,\sigma}$, we obtain  $w(t)\in G_{\alpha+1-\varep,\sigma}$ together with the estimate  \eqref{wremain}.

\textit{Proof of \eqref{wremain3}.} 
Take $\theta\in(a+a_0,1)$, and set $\delta=a_0/(\theta-a)$ and $\theta'=a/\theta$. Then  $\theta-a>a_0>0$, which gives $\delta,\theta'\in(0,1)$. It is also clear that
$\theta\theta'=a$ and $(1-\theta')\theta \delta=(\theta-a)\delta =a_0$.
Therefore, with these values of $\theta,\theta',\delta$, inequality \eqref{wremain3} follows \eqref{wremain}.

(iii) Note, by the monotonicity of $F$, that  the property \eqref{Fa} in fact holds true for all $a>0$, with $D_{2,a}=1$ for all $a\ge 1$.
Then we have 
\beq\label{eafa}
\begin{aligned}
e^{-a_0 t}+F(a t)
&=e^{-k_0 \cdot a_0 t/k_0}+F(a t)\\
& \le D_1 F(a_0 t/k_0) +F(at)\le (D_1 D_{2,a_0/k_0}+D_{2,a})F(t). 
\end{aligned}
\eeq

Combining \eqref{eafa} with \eqref{wremain3}, we obtain  inequality  \eqref{wremain2}.
The proof is complete.
\end{proof}

\subsection{The NSE}\label{LHdecay}

This subsection aims at establishing the large-time estimates for weak solutions of the NSE.
First, we obtain a result for small initial data and force.

\begin{theorem} \label{Ftheo}
Let $F$ be a continuous, decreasing, non-negative function on $[0,\infty)$.
Given  $\alpha\ge1/2$ and numbers $\theta_0,\theta\in (0,1)$ such that 
$\theta_0+\theta<1$.
Then there exist positive numbers $c_k=c_k(\alpha,\theta_0,\theta,F)$, for $k=0,1,2,3$, such that the following holds true.  If
\begin{align}\label{usmall}
|A^\alpha u^0|&\le c_0,\\
\label{fta}
|f(t)|_{\alpha-1/2,\sigma}&\le c_1F(t)\quad\text{a.e. in } (0,\infty)\text{ for some } \sigma\ge 0,
\end{align} 
then there exists a unique regular solution $u(t)$ of \eqref{fctnse} and \eqref{uzero}, which also belongs to  
$C([0,\infty),\mathcal D(A^\alpha))$ and satisfies, for all $t\ge 8\sigma(1-\theta)/(1-\theta-\theta_0)$,
\begin{align}\label{uest}
|u(t)|_{\alpha,\sigma}&\le c_2 (e^{-2\theta_0 t}+F^2(\theta t))^{1/2},\\
\label{intAa}
 \int_t^{t+1} |u(\tau)|_{\alpha+1/2,\sigma}^2d\tau
 &\le  c_3^2 (e^{-2\theta_0 t}+F^2(\theta t)).
 \end{align}
\end{theorem}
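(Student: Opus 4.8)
The plan is to construct $u$ by the standard small–data theory and then propagate its smallness in $G_{\alpha,\sigma}$ through a Gevrey–Sobolev energy inequality whose dissipation rate is tuned so that the two exponents in \eqref{uest} come out exactly right; the threshold $8\sigma(1-\theta)/(1-\theta-\theta_0)$ is the time at which a growing analyticity radius reaches $\sigma$.

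\emph{Step 1 (global small regular solution in $\mathcal D(A^\alpha)$).} By Fujita--Kato type local theory, \eqref{usmall}--\eqref{fta} produce a regular solution $u\in C([0,T_*),\mathcal D(A^\alpha))$, which by weak--strong uniqueness is the unique Leray--Hopf solution with datum $u^0$. Pairing \eqref{fctnse} with $A^{2\alpha}u$, using self-adjointness of $A$, bounding the trilinear term by $\langle A^\alpha B(u,u),A^\alpha u\rangle\le K^\alpha|u|_{\alpha+1/2,0}^2|u|_{\alpha,0}$ via \eqref{AalphaB} (only $\alpha\ge1/2$ is needed, so no restriction $\alpha\ge1$ arises), and Young's inequality on $\langle A^{\alpha+1/2}u,A^{\alpha-1/2}f\rangle$ with $|A^{\alpha-1/2}f|\le|f|_{\alpha-1/2,\sigma}\le c_1F(t)$, one gets, as long as $|A^\alpha u(t)|\le\delta_0$,
\[\ddt|A^\alpha u|^2+\mu_0|A^{\alpha+1/2}u|^2\le Cc_1^2F^2(t),\qquad \mu_0\ \text{close to}\ 2.\]
Since $|A^{\alpha+1/2}u|^2\ge|A^\alpha u|^2$, a continuity argument with $c_0,c_1$ small (in terms of $\alpha,\theta_0,\theta,F(0)$) keeps $|A^\alpha u(t)|\le\delta_0$ for all $t\ge0$, so $u$ is global.

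\emph{Step 2 (Gevrey-Sobolev estimate with a growing radius).} As $u^0$ need not lie in $G_{\alpha,\sigma}$, I work with $\varphi(t)=\min\{\sigma,\varep_0 t\}$, $\varep_0:=\tfrac{1-\theta-\theta_0}{8(1-\theta)}$, so that $\varphi\equiv\sigma$ for $t\ge T:=\tfrac{8\sigma(1-\theta)}{1-\theta-\theta_0}$; because $\sigma(A)\subset\N$, the operator $e^{-tA}$ sends $H$ into $G_{\alpha,\varep_0 t}$ for every $t>0$ (here the spectral gap $A\ge1$ forces a \emph{linear}-in-$t$, hence $\sigma$-proportional, threshold), so $y(t):=|u(t)|_{\alpha,\varphi(t)}^2$ is finite for $t>0$. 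Writing $v=A^\alpha e^{\varphi A^{1/2}}u$, testing the equation against $A^{2\alpha}e^{2\varphi A^{1/2}}u$ and using $\tfrac{d}{dt}e^{\varphi A^{1/2}}u=\varphi'A^{1/2}e^{\varphi A^{1/2}}u+e^{\varphi A^{1/2}}u'$ yields
\[\tfrac12 y'+|A^{1/2}v|^2\le\varphi'|A^{1/4}v|^2+K^\alpha|u|_{\alpha+1/2,\varphi}^2|u|_{\alpha,\varphi}+|f|_{\alpha-1/2,\sigma}|A^{1/2}v|,\]
the middle term estimated by \eqref{AalphaB} paired with $v$ rather than with $A^{1/2}v$, so that it stays quadratic in $|A^{1/2}v|$. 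Using $|A^{1/4}v|^2\le|A^{1/2}v|^2$ (as $A\ge1$), $|u|_{\alpha,\varphi}\le\delta_0$, and Young on the forcing term, one absorbs and obtains $y'+\mu|u|_{\alpha+1/2,\varphi}^2\le Cc_1^2F^2(t)$ with $\mu=2(1-\varep_0-K^\alpha\delta_0-\eta)$, and hence $y'+\mu y\le Cc_1^2F^2(t)$.

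\emph{Step 3 (conclusion).} A continuity argument from $y(0)=|A^\alpha u^0|^2\le c_0^2<\delta_0^2$ (with $c_1$ small) gives $y(t)\le\delta_0^2$ for all $t\ge0$, so the absorption in Step 2 is legitimate. Gronwall from $0$ then yields $y(t)\le e^{-\mu t}c_0^2+Cc_1^2\int_0^t e^{-\mu(t-\tau)}F^2(\tau)\,d\tau$, and Lemma \ref{Flem} (applied to the continuous decreasing function $F^2$, with parameters $\mu$ and $\theta$) bounds the integral by $\mu^{-1}(F^2(0)e^{-\mu(1-\theta)t}+F^2(\theta t))$. The choice of $\varep_0$ together with the smallness of $\delta_0,\eta$ makes $\mu(1-\theta)\ge2\theta_0$ — this is exactly where $\theta_0+\theta<1$ is used, through $\tfrac{\theta_0}{1-\theta}<1$ — and a fortiori $\mu\ge2\theta_0$; hence $y(t)\le c_2^2(e^{-2\theta_0 t}+F^2(\theta t))$ for all $t\ge0$, with $c_2$ independent of $\sigma$. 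For $t\ge T$ one has $\varphi(t)=\sigma$, which is \eqref{uest}; integrating $y'+\mu|u|_{\alpha+1/2,\varphi}^2\le Cc_1^2F^2$ over $[t,t+1]$ and using $y\ge0$, \eqref{uest} at time $t$, and monotonicity of $F$ gives \eqref{intAa}.

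The delicate point is carrying out Steps 2 and 3 together. Gaining the \emph{fixed} radius $\sigma$ from a datum only in $\mathcal D(A^\alpha)$ forces the growing $\varphi$, and it is only the periodic spectral gap $\sigma(A)\subset\N$ that lets $\varphi$ grow linearly, which is why the threshold is proportional to $\sigma$. At the same time the quadratic term has to be controlled in Gevrey-Sobolev norms (handled by \eqref{AalphaB} at exponent $\alpha$, not $\alpha-1/2$, so no hypothesis $\alpha\ge1$ is needed), and — most importantly — the \emph{sharp} rate $2\theta_0$ must be extracted: this is what dictates both the test function $A^{2\alpha}e^{2\varphi A^{1/2}}u$ (so that, after $|A^{1/2}v|^2\ge|v|^2$, the decay rate in the ODE is $\mu\approx2$ rather than $\approx1$) and the precise size of $T$, since the budget $1-\theta-\theta_0>0$ must be partitioned among $\varphi'=\varep_0$, the nonlinear absorption $K^\alpha\delta_0$, and the Young constant $\eta$ in order to land on $\mu(1-\theta)\ge2\theta_0$.
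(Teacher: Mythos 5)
Your proposal is correct and follows essentially the same route as the paper: a Gevrey--Sobolev energy estimate with an analyticity radius $\varphi(t)$ increasing from $0$ to $\sigma$ over $[0,t_*]$, the nonlinearity controlled by \eqref{AalphaB}, a continuity/bootstrap argument keeping $|u|_{\alpha,\varphi(t)}$ small, and Gronwall combined with Lemma \ref{Flem} to extract the rates $e^{-2\theta_0 t}$ and $F^2(\theta t)$ (your separate Step 1 is subsumed by the $\sigma=0$ case of this estimate, and the aside attributing the linear growth of $\varphi$ to the spectral gap is inessential --- the real constraint is absorbing $\varphi'$ into the dissipation, which you do correctly). The only differences are cosmetic: a piecewise-linear rather than smooth $\varphi$, and a slightly different partition of the budget $1-\theta_*$ among $\varphi'$, the nonlinear term, and Young's inequality.
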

\begin{proof}
The proof follows \cite[Theorem 3.1]{CaH1}. The calculations below are formal, but can be made rigorous by using the Galerkin approximations and then pass to the limit.

Let $\theta_*=\theta_0/(1-\theta)\in(\theta_0,1)$ and denote  $t_*=8\sigma/(1-\theta_*)=8\sigma(1-\theta)/(1-\theta-\theta_0)$.

\medskip
(a)  For $\sigma>0$, let $\varphi$ be a $C^\infty$-function on $\R$ such that $\varphi((-\infty,0])=\{0\}$, $\varphi([t_*,\infty))=\{\sigma\}$, and $0< \varphi'< 2\sigma/t_*$ on $(0,t_*)$. 
We derive from \eqref{fctnse} that 
\begin{align}
\ddt (A^{\alpha}e^{\varphi(t) A^{1/2}}u) 
&=\varphi'(t)A^{1/2}A^{\alpha}e^{\varphi(t) A^{1/2}} u+A^{\alpha}e^{\varphi(t) A^{1/2}}\frac{du}{dt}  \notag\\
&=\varphi'(t)A^{\alpha+1/2}e^{\varphi(t) A^{1/2}} u+A^{\alpha}e^{\varphi(t) A^{1/2}}(-Au-B(u,u)+f).\label{daeu}
\end{align}

By taking the inner product in $H$ of  \eqref{daeu} with $A^{\alpha}e^{\varphi(t) A^{1/2}}u(t)$, we obtain 
\begin{align*}
&\frac12\ddt |u|_{\alpha,\varphi(t)}^2   + |A^{1/2}u|_{\alpha,\varphi(t)}^2
=\varphi'(t)\langle A^{\alpha+1/2}e^{\varphi(t) A^{1/2}}u,A^{\alpha}e^{\varphi(t) A^{1/2}} u\rangle\\ 
&\quad -\langle A^{\alpha}e^{\varphi(t) A^{1/2}}B(u,u),A^{\alpha}e^{\varphi(t) A^{1/2}}u\rangle+ \langle A^{\alpha-1/2}e^{\varphi(t) A^{1/2}}f,A^{\alpha+1/2}e^{\varphi(t) A^{1/2}}u\rangle.
\end{align*}

Using the Cauchy-Schwarz inequality, and estimating the second term on the right-hand side  by \eqref{AalphaB}, we get
\beq \label{s0}
\begin{aligned}
\frac12\ddt |u|_{\alpha,\varphi(t)}^2   + |A^{1/2}u|_{\alpha,\varphi(t)}^2
 &\le \varphi'(t) |A^{1/2}u|_{\alpha,\varphi(t)}^2\\
 &\quad +K^\alpha |A^{1/2}u|_{\alpha,\varphi(t)}^2 |u|_{\alpha,\varphi(t)}
+ |f(t)|_{\alpha-1/2,\varphi(t)}|A^{1/2}u|_{\alpha,\varphi(t)}.
\end{aligned}
\eeq 

Using the bound of $\varphi'(t)$ and applying Cauchy's inequality to the last term gives
\begin{align*}
&\frac12\ddt |u|_{\alpha,\varphi(t)}^2   + |A^{1/2}u|_{\alpha,\varphi(t)}^2
\le \frac{2\sigma}{t_*} |A^{1/2}u|_{\alpha,\varphi(t)}^2 \\
&+K^\alpha |u|_{\alpha,\varphi(t)} |A^{1/2}u|_{\alpha,\varphi(t)}^2 
 +\frac{2\sigma}{t_*}|A^{1/2}u|_{\alpha,\varphi(t)}^2+ \frac{t_*}{8\sigma}|f(t)|_{\alpha-1/2,\varphi(t)}^2,
\end{align*}
which, together with the fact $\varphi(t)\le\sigma$, implies
\beqs
\frac12\ddt |u|_{\alpha,\varphi(t)}^2 + \Big(1-\frac{4\sigma}{t_*} -K^\alpha |u|_{\alpha,\varphi(t)}\Big)|A^{1/2}u|_{\alpha,\varphi(t)}^2 \le \frac{t_*}{8\sigma}|f(t)|_{\alpha-1/2,\sigma}^2.
\eeqs
Thus,
 \beq\label{s1}
 \frac12\ddt |A^\alpha u|^2+\Big(1-\frac{1-\theta_*}{2}-K^\alpha|A^\alpha u|\Big)|A^{\alpha+1/2}u|^2\le \frac{1}{1-\theta^*}|A^{\alpha-1/2}f|^2.
 \eeq

(b) For $\sigma=0$, let $\varphi\equiv 0$ on $\R$.  Then the first term in \eqref{s0} vanishes. Applying Cauchy's inequality to the last term of \eqref{s0}:
\beqs
 |f(t)|_{\alpha-1/2,\varphi(t)}|A^{1/2}u|_{\alpha,\varphi(t)}\le \frac{1-\theta_*}2 |A^{1/2}u|_{\alpha,\varphi(t)}^2 + \frac{1}{1-\theta_*}  |f(t)|_{\alpha-1/2,\varphi(t)}^2,
\eeqs
we obtain the same inequality \eqref{s1}.

(c) In the calculations below, we use the following constants
\begin{align*}
c_*&=c_*(\alpha,\theta_0,\theta,F)= \frac{1-\theta_*}{4K^\alpha},\ 
&&\gamma=\gamma(F)=\frac{1}{F(0)+1}\in(0,1],\\
c_0&=c_0(\alpha,\theta_0,\theta,F)=\gamma c_*,\
&&c_1=c_1(\alpha,\theta_0,\theta,F)=\gamma^2 c_* (\theta_*(1-\theta_*))^{1/2},\\
c_2&=c_2(\alpha,\theta_0,\theta,F)=\sqrt{2} \gamma c_*,\
&&c_3=c_3(\alpha,\theta_0,\theta,F)=(1+\theta_*^{-1})^{1/2}\gamma c_* .
\end{align*}

At the initial time, we have  
$$|u(0)|_{\alpha,\varphi(0)}=|A^\alpha u^0|<2c_0\le 2c_*.$$ 

Let $T\in(0,\infty)$. Assume that
\beq\label{uT}
|u(t)|_{\alpha,\varphi(t)}\le 2c_*\quad \forall t\in[0,T).
\eeq

This and the definition of $c_*$ give
\beq\label{Ku}
K^\alpha |u(t)|_{\alpha,\varphi(t)} \le 2c_* K^\alpha=(1-\theta_*)/2 \quad \forall t\in[0,T).
\eeq

For $t\in (0,T)$, we have from \eqref{s1} for both $\sigma>0$ and $\sigma=0$, and \eqref{Ku} that
\beq\label{s2}
\ddt |u|_{\alpha,\varphi(t)}^2 + 2\theta_*|A^{1/2}u|_{\alpha,\varphi(t)}^2 \le 2(1-\theta_*)^{-1}|f(t)|_{\alpha-1/2,\sigma}^2.
\eeq

Applying Gronwall's inequality in \eqref{s2} and using \eqref{usmall}, \eqref{fta} yield, for all $t\in(0,T)$, that
\begin{align*}
|u(t)|_{\alpha,\varphi(t)}^2
&\le e^{-2\theta_*t}|u^0|_{\alpha,0}^2+2(1-\theta_*)^{-1}\int_0^t e^{-2\theta_*(t-\tau)}|f(\tau)|_{\alpha-1/2,\sigma}^2 d\tau\\
&\le e^{-2\theta_0 t}c_0^2+2(1-\theta_*)^{-1}c_1^2\int_0^t e^{-2\theta_*(t-\tau)}F^2(\tau) d\tau.
\end{align*}

For the last integral, applying \eqref{Fi1} with $F:=F^2$, $\sigma:=2\theta_*$ and noting that $(1-\theta)\theta_*=\theta_0$ give 
\beqs 
 \int_0^t e^{-2\theta_* (t-\tau)}F^2(\tau)d\tau
 \le \frac1{2\theta_*}\Big(F^2(0)e^{-2\theta_0 t}+F^2(\theta t)\Big)\quad\forall t\ge 0.
\eeqs

Then  we obtain
\begin{align*}
 |u(t)|_{\alpha,\varphi(t)}^2 
& \le    c_0^2 e^{-2\theta_0 t} + [\theta_*(1-\theta_*)]^{-1}c_1^2\big( \gamma^{-2}e^{-2\theta_0 t}+ F^2(\theta t)\big)\\
&\le   2  c_*^2  \gamma^2(e^{-2\theta_0 t} + F^2(\theta t)).
\end{align*}

This implies 
\beq\label{s4}
|u(t)|_{\alpha,\varphi(t)}
\le  \sqrt{2}  c_*  \gamma (e^{-2\theta_0 t}+F^2(\theta t))^{1/2} \quad\forall t\in[0,T).
\eeq

Letting $t\to T^-$ in \eqref{s4} and using the monotonicity of $F$ give
\beq\label{limTu}
\lim_{t\to T^-}|u(t)|_{\alpha,\varphi(t)}
 \le \sqrt{2}  c_* \gamma (1+F^2(0))^{1/2} < 2c_*.
\eeq

By the standard contradiction argument, it follows \eqref{uT} and \eqref{limTu} that the inequalities \eqref{uT} and \eqref{s4}, in fact, hold true for $T=\infty$.  
Then, due to the fact $\varphi(t)=\sigma$ for all $t\ge t_*$,  inequality \eqref{s4} implies \eqref{uest}.

(d) For $t\ge t_*$, by integrating \eqref{s2} from $t$ to $t+1$, and using estimates \eqref{uest}, \eqref{fta}, we obtain
\begin{align*}
\int_t^{t+1} |A^{1/2}u(\tau)|_{\alpha,\sigma}^2d\tau
& \le \frac{1}{2\theta_*}| u(t)|_{\alpha,\sigma}^2+[\theta_*(1-\theta_*)]^{-1}c_1^2 \int_t^{t+1} F^2(\tau)d\tau\\
&\le c_*^2 \gamma^2  \theta_*^{-1}(e^{-2\theta_0 t}+F^2(\theta t))+ c_*^2 \gamma^2 F^2(\theta t).
\end{align*}
Then inequality \eqref{intAa} follows. The proof is complete.
\end{proof}

\begin{theorem}\label{Fthm2}
Let $F$ be a continuous, decreasing, non-negative function on $[0,\infty)$
 that satisfies
\beq\label{Fzero}
\lim_{t\to\infty} F(t)=0.
\eeq 

Suppose there exist $\sigma\ge 0$, $\alpha\ge 1/2$ such that
\beq\label{falphaonly}
|f(t)|_{\alpha,\sigma}=\mathcal O(F(t)).
\eeq

Let $u(t)$ be a Leray-Hopf weak solution of \eqref{fctnse}.  
Then  there exists  $\hat{T}>0$ 
such that $u(t)$ is a regular solution of \eqref{fctnse} on $[\hat{T},\infty)$, and for any $\varep,\lambda\in(0,1)$, and $a_0,a,\theta_0,\theta\in(0,1)$ with $a_0+a<1$, $\theta_0+\theta<1$, there exists $C>0$ such that
\beq\label{newu}
|u(\hat T + t)|_{\alpha+1-\varep,\sigma}
\le C(e^{-a_0 t}+e^{-2\theta_0 a t}+F^{2\lambda}(\theta a t)+F(at))\quad\forall t\ge 0. 
\eeq

If, in addition, $F$ satisfies \eqref{eF} and \eqref{Fa}, then
 \beq\label{us0}
 |u(\hat{T}+t)|_{\alpha+1-\varep,\sigma} \le CF(t)\quad \forall t\ge 0.
 \eeq 
\end{theorem}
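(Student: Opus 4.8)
The plan is to establish \eqref{newu} by a sequence of bootstrapping arguments resting on Theorems~\ref{Ftheo} and \ref{Fode}, and then to read off \eqref{us0} from \eqref{newu} using the structural hypotheses \eqref{eF}, \eqref{Fa}. First I would show that $u$ eventually becomes regular. Since $|f(t)|\le|f(t)|_{\alpha,\sigma}=\mathcal O(F(t))$ and $F(t)\to 0$, the bound \eqref{iniener} together with Lemma~\ref{Flem} forces $|u(t)|\to 0$. Feeding this into the energy inequality \eqref{Lenergy} over a unit interval $[t_0,t_0+1]$ with $t_0\in\goodt$ produces, for $t_0$ large, a time $t_1\in(t_0,t_0+1)$ at which $\|u(t_1)\|=|A^{1/2}u(t_1)|$ is arbitrarily small. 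Translating time to $t_1$ and applying Theorem~\ref{Ftheo} with $\alpha:=1/2$, $\sigma:=0$ (using a suitably rescaled $F(t_1+\cdot)$ as the decay function and choosing $t_1$ large enough for the smallness conditions \eqref{usmall}, \eqref{fta} to hold, which is legitimate since the constants there depend on the decay function only through its value at the origin) then identifies $u$ with the unique regular solution on $[t_1,\infty)$ and gives $\|u(t)\|=\mathcal O(e^{-ct}+F(c't))$ there.

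Next I would upgrade this to Gevrey decay at level $\alpha$. For $t>t_1$ the parabolic/Gevrey smoothing of \eqref{fctnse} against the $G_{\alpha,\sigma}$-valued force places $u(t)$ in $G_{\alpha,\sigma}$, and the preceding $V$-decay, fed into the semigroup estimates, furnishes a time $t_2>t_1$ at which $|u(t_2)|_{\alpha,\sigma}$ is as small as required. A second application of Theorem~\ref{Ftheo}, now with the given $\alpha$ and $\sigma$ (and again a rescaled $F(t_2+\cdot)$), then yields both the pointwise bound $|u(t_2+t)|_{\alpha,\sigma}\le c_2(e^{-2\theta_0 t}+F^2(\theta t))^{1/2}$ and the time-integrated bound $\int_t^{t+1}|u(t_2+\tau)|_{\alpha+1/2,\sigma}^2\,d\tau\le c_3^2(e^{-2\theta_0 t}+F^2(\theta t))$.

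For the remaining $1-\varepsilon$ derivatives I would split $u=u_{\rm lin}+u_{\rm nl}$ on $[t_2,\infty)$, where $u_{\rm lin}$ solves $u_{\rm lin}'=-Au_{\rm lin}+f$ with $u_{\rm lin}(t_2)=u(t_2)$ and $u_{\rm nl}$ solves $u_{\rm nl}'=-Au_{\rm nl}-B(u,u)$ with $u_{\rm nl}(t_2)=0$; by uniqueness for the linear problem this is legitimate. Theorem~\ref{Fode}(ii), applied to $u_{\rm lin}$ with $\xi:=0$, $w_0:=u(t_2)\in G_{\alpha,\sigma}$ and the force $f$ (which satisfies $|f(t)|_{\alpha,\sigma}\le MF(t)$ a.e.), gives $|u_{\rm lin}(t_2+t)|_{\alpha+1-\varepsilon,\sigma}\le C(e^{-a_0 t}+F(at))$. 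For $u_{\rm nl}$, its Duhamel formula is $u_{\rm nl}(t_2+t)=-\int_{t_2}^{t_2+t}e^{-(t_2+t-\tau)A}B(u,u)(\tau)\,d\tau$; by the Gevrey bilinear bound \eqref{AalphaB}, $|B(u,u)(\tau)|_{\alpha,\sigma}\le K^\alpha|u(\tau)|_{\alpha+1/2,\sigma}^2$, and interpolating the pointwise bound on $|u|_{\alpha,\sigma}$ against the time-integrated bound on $|u|_{\alpha+1/2,\sigma}$ of the previous paragraph controls this quantity, in a time-averaged sense, by $\mathcal O(e^{-2\theta_0\tau}+F^2(\theta\tau))$. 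Splitting the $\tau$-integral as in the proof of Theorem~\ref{Fode} and handling the smoothing factor $(t_2+t-\tau)^{-(1-\varepsilon)}$ by H\"older's inequality in $\tau$ (together with Lemma~\ref{Flem}) then yields $|u_{\rm nl}(t_2+t)|_{\alpha+1-\varepsilon,\sigma}\le C(e^{-2\theta_0 at}+F^{2\lambda}(\theta at))$ for any $\lambda\in(0,1)$. This is the step I expect to be genuinely delicate, and the reason the exponent is $2\lambda$ rather than $2$: the bilinear term's decay is available only in a time-averaged form, and converting it to a pointwise bound in $G_{\alpha+1-\varepsilon,\sigma}$ through the non-$L^\infty$-integrable singularity of the analytic-smoothing kernel costs an arbitrarily small power of $F$. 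Adding the two contributions gives \eqref{newu} with $\hat T$ taken slightly larger than $t_2$, so as to absorb the time threshold appearing in Theorem~\ref{Ftheo}.

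Finally, to deduce \eqref{us0} I would assume \eqref{eF}, \eqref{Fa} and fix $\lambda\in[1/2,1)$, so that $2\lambda\ge 1$; since $F(t)\to 0$, eventually $F<1$ and hence $F^{2\lambda}(\theta at)\le F(\theta at)$. By \eqref{Fa}, which by monotonicity persists for exponents $\ge 1$, $F(\theta at)\le CF(t)$ and $F(at)\le CF(t)$; by \eqref{eF} followed by \eqref{Fa}, $e^{-a_0 t}=e^{-k_0(a_0/k_0)t}\le D_1F((a_0/k_0)t)\le CF(t)$, and likewise $e^{-2\theta_0 at}\le CF(t)$. Hence every term on the right-hand side of \eqref{newu} is $\le CF(t)$ for $t$ large, and after enlarging $C$ and, if necessary, $\hat T$ to cover the bounded initial range we obtain \eqref{us0}.
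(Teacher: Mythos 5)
Your overall architecture (reach regularity and smallness via the energy inequality, bootstrap with Theorem \ref{Ftheo}, then treat the equation as a linearized NSE and invoke Theorem \ref{Fode}, finally absorb all terms into $F(t)$ using \eqref{eF} and \eqref{Fa}) is the paper's architecture, and your last paragraph deriving \eqref{us0} from \eqref{newu} is essentially the paper's argument. But there is a genuine gap in your treatment of the nonlinear part, and it is located exactly at the step you flag as ``delicate.'' You stop the bootstrap at the level where $|u(t)|_{\alpha,\sigma}$ is controlled pointwise and $|u(t)|_{\alpha+1/2,\sigma}$ only in time-average, and then try to push $B(u,u)$ through the Duhamel integral using the smoothing kernel $(t_2+t-\tau)^{-(1-\varepsilon)}$ and H\"older in $\tau$. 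This does not close: on the near-diagonal unit interval the kernel lies in $L^q_\tau$ only for $q<1/(1-\varepsilon)$, so H\"older demands $|B(u,u)|_{\alpha,\sigma}\in L^{q'}_\tau$ with $q'>1/\varepsilon$, i.e.\ $|u|_{\alpha+1/2,\sigma}\in L^{2q'}_\tau$ with $2q'>2$, whereas \eqref{intAa} gives only $L^2_\tau$. No interpolation with the pointwise $G_{\alpha,\sigma}$ bound supplies the missing integrability (that would require control of norms above $\alpha+1/2$), and sacrificing ``an arbitrarily small power of $F$'' cannot repair a deficit in the time-integrability exponent; the exponent $2\lambda$ in \eqref{newu} does not originate there.

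The paper closes this gap by running Theorem \ref{Ftheo} one more time, at level $\alpha+1/2$ rather than $\alpha$: the time-integrated bound is used only to locate a time $T\in\goodt$ at which $|A^{\alpha+1/2}u(T)|$ is below the smallness threshold $c_0(\alpha+1/2,F^\lambda)$ (for $\sigma=0$ this requires the finite induction on $j$ in step (c) of the paper's proof, climbing half a derivative at a time, which your ``parabolic smoothing'' sentence elides), after which \eqref{uest} yields a \emph{pointwise} bound $|u(\hat T+t)|_{\alpha+1/2,\sigma}\le C(e^{-2\theta_0 t}+F^{2\lambda}(\theta t))^{1/2}$. Then $\tilde f=-B(u,u)+f$ is a force with a pointwise $G_{\alpha,\sigma}$ bound by \eqref{AalphaB}, and Theorem \ref{Fode} applies directly to \eqref{ulin} with no decomposition of $u$ and no singular-kernel bookkeeping. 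The exponent $\lambda<1$ enters at this stage for a different reason than you suggest: the constant $c_1$ in \eqref{fta} is a fixed small number, while the implied constant in \eqref{falphaonly} may be large, and the only way to meet \eqref{fta} after a time shift is to write $F(T+t)\le F(T)^{1-\lambda}F^\lambda(t)$ and use $F(T)^{1-\lambda}\to0$, which forces the decay function fed into Theorem \ref{Ftheo} to be $F^\lambda$ rather than $F$. (Your alternative of rescaling the decay function by a constant and exploiting that $c_0,c_1$ depend on it only through its value at $0$ is a plausible workaround for the smallness issue, but it does not rescue the Duhamel step, which is where your argument actually breaks.)
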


\begin{proof} By \eqref{falphaonly}, there exist $T_1>0$ and $C_1>0$ such that
\beq\label{CF}
|f(t)|_{\alpha,\sigma}\le C_1 F(t)\quad t\ge T_1.
\eeq

We claim the following fact which is weaker than the desired estimate \eqref{us0}.

\medskip
\noindent\textbf{Claim.} 
\textit{ For any $\lambda\in(0,1)$, and $\theta,\theta_0\in(0,1)$ with $\theta+\theta_0<1$, there exists  $\hat{T}\ge T_1$ 
such that $u(t)$ is a regular solution of \eqref{fctnse} on $[\hat{T},\infty)$, and one has for all $t\ge 0$ that
 \beq\label{preus0}
 |u(\hat{T}+t)|_{\alpha+1/2,\sigma} \le C(e^{-2\theta_0 t}+ F^{2\lambda}(\theta t))^{1/2},
 \eeq
for some positive constant $C$.}

Accepting this Claim at the moment, we prove \eqref{us0}. 
Rewrite the NSE \eqref{fctnse} as the linearized NSE:
\beq\label{ulin} u_t+Au=\tilde f(t)\eqdef -B(u(t),u(t))+f(t) .
\eeq

Then from \eqref{preus0}  and \eqref{AalphaB} we obtain
for $t$ large,
\beqs
|B(u(\hat T+t),u(\hat T+t))|_{\alpha,\sigma} \le K^\alpha |u(\hat T+t)|_{\alpha+1/2,\sigma}^2 \le C_2(e^{-2\theta_0 t}+F^{2\lambda}(\theta t))
\eeqs 
for some positive constant $C_2$.
From this and \eqref{CF}, we have, for $t\ge 0$,
\beq\label{tiF}
|\tilde f(\hat T+t)|_{\alpha,\sigma}\le C_2(e^{-2\theta_0 t}+F^{2\lambda}(\theta t))+C_1 F(\hat T+t )\le C_3 \tilde F(t),
\eeq
where $C_3=C_1+C_2$ and $\tilde F(t)=e^{-2\theta_0 t}+F^{2\lambda}(\theta t)+F(t)$.

By \eqref{ulin} and \eqref{tiF}, we apply part (iii) of Theorem \ref{Fode} with $w(t):=u(\hat T+t)$, $f(t):=\tilde f(\hat T+t)$, $\xi=0$,
$M=C_3$, $F(t):=\tilde F(t)$ to obtain from \eqref{wremain3}, with $t:=t+1$, that
\begin{align*}
|u(\hat T + t + 1)|_{\alpha+1-\varep,\sigma}
&\le C_4(e^{-a_0(t+1)}+\tilde F(a(t+1)))\\
&\le C_4(e^{-a_0 t}+e^{-2\theta_0 a t}+F^{2\lambda}(\theta a t)+F(at)). 
\end{align*}
for all $t \ge 0$ and some constant $C_4>0$.
By re-denoting $\hat T:=\hat T+1$, we obtain \eqref{newu}.

Now, assume \eqref{eF} and \eqref{Fa}. Taking $\lambda=1/2$, $a_0=\theta_0\in(0,1/2)$ and $a=1/2$, we obtain from \eqref{newu}
 \beq\label{us1}
 |u(\hat{T}+t)|_{\alpha+1-\varep,\sigma} \le C(e^{-\theta_0 t}+F(\theta t/2))\quad \forall t\ge 0. 
 \eeq

Similarly to proving \eqref{eafa}, we obtain inequality \eqref{us0} from \eqref{us1}.

\medskip
The rest of this proof is to prove the Claim.

\medskip
(a) By Assumption \ref{A1}, there exists  $C_0>0$ such that
\beq\label{fkappa}
|f(t)|\le C_0, \quad\text{a.e. in } (0,T_1).
\eeq

On the one hand, using  \eqref{iniener}, \eqref{CF}, \eqref{fkappa} we have, for all $t\ge T_1$, that
\begin{align*}
|u(t)|^2
&\le e^{-t}|u_0|^2 + C_0^2 \int_0^{T_1} e^{-(t-\tau)}d\tau + C_1^2 \int_{T_1}^t e^{-(t-\tau)}F^2(\tau)d\tau\\
&\le e^{-t}|u_0|^2 + C_0^2 e^{-t}e^{T_1} + C_1^2 \int_0^t e^{-(t-\tau)}F^2(\tau)d\tau.
\end{align*}

To  estimate the last integral, we apply inequality \eqref{Fi1} with $\sigma:=1$, $\theta:=1/2$, $F:=F^2$, hence, obtain 
\beq\label{uenerM}
|u(t)|^2\le  e^{-t}(|u_0|^2+C_0^2e^{T_1}) + C_1^2 (F^2(0)e^{-t/2}+F^2(t/2))\quad\forall t\ge T_1.
\eeq

On the other hand, we estimate in \eqref{Lenergy}
\beqs
|\inprod{f(\tau),u(\tau)}|\le \frac12 |u(\tau)|^2+\frac12 |f(\tau)|^2 \le \frac12 \|u(\tau)\|^2+\frac12 |f(\tau)|^2.
\eeqs
Hence, we obtain 
	\beq\label{intt0}
		|u(t)|^2+\int_{t_0}^t \|u(\tau)\|^2d\tau\le |u(t_0)|^2+\int_{t_0}^t |f(\tau)|^2\ d\tau
	\eeq
for all $t_0\in \goodt$ and  $t\ge t_0$.

Let $t_0\in\goodt\cap [T_1,\infty)$. Setting $t=t_0+1$ in \eqref{intt0}, using \eqref{uenerM} to estimate $|u(t_0)|^2$, and \eqref{CF} to estimate $|f(\tau)|$, we derive
\beqs
\int_{t_0}^{t_0+1}\|u(\tau)\|^2d\tau 
\le e^{-t_0}(|u_0|^2+C_0^2 e^{T_1}) + C_1^2 (F^2(0)e^{-t_0/2}+F^2(t_0/2)) + C_1^2 F^2(t_0),
\eeqs
thus,
\beq\label{uVest}
\int_{t_0}^{t_0+1}\|u(\tau)\|^2d\tau 
\le e^{-t_0/2}(|u_0|^2 +C_0^2 e^{T_1}+ C_1^2 F^2(0)) + 2C_1^2F^2(t_0/2)).
\eeq 

Let $t\ge T_1$ be arbitrary now. There exists a sequence $\{t_n\}_{n=1}^\infty$ in $\goodt\cap (T_1,\infty)$ such that  $\lim_{n\to\infty}t_n=t$. Then \eqref{uVest} holds for $t_0=t_n$, and letting $n\to\infty$ gives
 \beq\label{tt1}
\int_t^{t+1}\|u(\tau)\|^2d\tau 
\le M_t\eqdef e^{-t/2}(|u_0|^2 + C_0^2 e^{T_1}+C_1^2F^2(0))+2C_1^2F^2(t/2))\quad\forall t\ge T_1.
\eeq

Note that the quantity $M_t$ in \eqref{tt1} is decreasing in $t$, and goes to zero as $t$ tends to infinity.

\medskip
(b) Consider $\sigma>0$. Let  $\lambda\in(0,1)$.
For $T>0$,  we write
\beq\label{Fsplit}
F(t+T)=F^{1-\lambda}(t+T)F^{\lambda}(t+T)\le F(T)^{1-\lambda}F^\lambda(t).
\eeq

Choose $T_2>T_1$ such that 
\beqs
M_{T_2}< c_0(1/2,F^\lambda)/2\text{ and }  F(T_2)^{1-\lambda}\le c_1(1/2,F^\lambda)/C_1.
\eeqs

By applying inequality \eqref{tt1} to $t=T_2$, there exists $t_0\in\goodt \cap (T_2,T_2+1)$ such that 
\beqs
|A^{1/2}u(t_0)|\le 2M_{t_0}\le 2 M_{T_2} <c_0(1/2,F^\lambda).
\eeqs

Moreover, for $t\ge 0$, by \eqref{CF} and \eqref{Fsplit},
\begin{align}
|f(t_0+t)|_{0,\sigma} 
&\le C_1 F(t_0+t)\le C_1 F(t_0)^{1-\lambda}F^\lambda(t)\le C_1 F(T_2)^{1-\lambda}F^\lambda(t)\notag\\
&\le c_1(1/2,F^\lambda) F^\lambda(t). \label{ft0}
\end{align}

Applying Theorem \ref{Ftheo} to the unique regular solution $u(t):= u(t_0+t)$, force  $f(t):= f(t_0+t)$ with parameters $\alpha=1/2$ and $F(t):=F^\lambda(t)$, we obtain from \eqref{uest} that 
\beq\label{ut1}
|u(t_0+t)|_{1/2,\sigma}\le c_2(1/2,F^\lambda) (e^{-\theta_0 t}+F^\lambda(\theta t))\quad \forall t\ge t_*,
\eeq
where $t_*$ is a non-negative number.
Then by \eqref{als}, we have for all $t\ge t_*$ that
\beqs
|A^{\alpha+1/2} u(t_0+t)|
\le  d_0(2\alpha+1,\sigma) |e^{\sigma A^{1/2}} u(t_0+t)|
\le d_0(2\alpha+1,\sigma)   |u(t_0+t)|_{1/2,\sigma},
\eeqs
and, hence, thanks to \eqref{ut1},
\beq\label{Aaut0}
\lim_{t\to\infty} |A^{\alpha+1/2} u(t_0+t)|=0.
\eeq

Using \eqref{Aaut0}, and similar to \eqref{ft0} with the norm $|\cdot|_{\alpha,\sigma}$ replacing $|\cdot|_{0,\sigma}$, we deduce that there is  $T\in\goodt\cap(t_0+t_*,\infty)$ so that
\begin{align}\label{Aut}
|A^{\alpha+1/2} u(T)|&\le c_0(\alpha+1/2,F^\lambda),\\
\label{ftt}
|f(T+t)|_{\alpha,\sigma}&\le c_1(\alpha+1/2,F^\lambda)F^\lambda(t) \quad \forall t\ge 0.
\end{align}

\medskip
(c) We will establish \eqref{Aut} and \eqref{ftt} when $\sigma=0$. 
First, we observe the following: if $j\in \N$ such that  $j\le 2\alpha+1$ and 
\beq\label{intAj}
\lim_{t\to\infty}\int_t^{t+1}|A^{j/2}u(\tau)|^2d\tau=0, 
\eeq
then
\beq\label{intAp1}
\lim_{t\to\infty}\int_t^{t+1}|A^{(j+1)/2}u(\tau)|^2d\tau=0.
\eeq

Indeed, since  $(j-1)/2 \le\alpha$, and thanks to \eqref{falphaonly}, we have 
\beq\label{fj}
|A^\frac{j-1}2f(t)|=\mathcal O(F(t)).
\eeq
By \eqref{intAj} and \eqref{fj}, we obtain, similar to \eqref{Aut} and \eqref{ftt} that there exists $T_3\in\goodt\cap[T_1,\infty)$ so that
\beqs
|A^{j/2} u(T_3)|\le c_0(j/2,F^\lambda),
\eeqs
\beqs
|A^{j/2-1/2}f(T_3+t)|\le c_1(j/2,F^\lambda)F^\lambda(t) \quad \forall t\ge 0.
\eeqs

Applying Theorem \ref{Ftheo}  to $u(t):=u(T_3+t)$, $f(t):=f(T_3+\cdot)$, $F(t):=F^\lambda(t)$, $\alpha:=j/2$, $\sigma:=0$,  we obtain from \eqref{intAa} that
\begin{align*}
\int_t^{t+1}|A^{(j+1)/2}u(\tau)|^2d\tau
&=\int_{t-T_3}^{t+1-T_3}|A^{(j+1)/2}u(T_3+\tau)|^2d\tau\\
&=\mathcal O(e^{-2\theta_0(t-T_3)}+F^{2\lambda}(\theta(t-T_3))),
\end{align*}
which proves \eqref{intAp1}, thanks to \eqref{Fzero}. 

\medskip
Now, let $m$ be a non-negative integer such that
$2\alpha \le m<2\alpha+1$. 

Note that $m\ge 1$, and, because of \eqref{tt1}, condition \eqref{intAj} holds true for $j=1$.
Hence we obtain \eqref{intAp1} with $j=1$, which is \eqref{intAj} for $j=2$.
We apply the arguments recursively for $j=1,2,\ldots,m$, and obtain, when $j=m$, from \eqref{intAp1} that
\beqs
\lim_{t\to\infty} \int_t^{t+1}|A^{(m+1)/2}u(\tau)|^2d\tau=0.
\eeqs
Since $\alpha\le m/2$, it follows that
\beq\label{intAm}
\lim_{t\to\infty}\int_t^{t+1}|A^{\alpha+1/2}u(\tau)|^2d\tau=0.
\eeq

By \eqref{intAm}, \eqref{CF}, \eqref{Fsplit} and \eqref{Fzero}, there exists $T\in\goodt\cap[T_1,\infty)$ so that \eqref{Aut} and \eqref{ftt} similarly hold true (for this case of $\sigma=0$.)

\medskip
(d) With $T\in\goodt\cap[T_1,\infty)$ in (b) and (c), we apply Theorem \ref{Ftheo} to the unique regular solution $u(t):=u(T+t)$, $f(t):=f(T+t)$, $F(t):=F^\lambda(t)$, $\alpha:=\alpha+1/2$, and obtain that there is $t_*\ge 0$ such that, following \eqref{uest} with $t:=t+t_*$,
\begin{align*}
|u(T+t_* +t)|_{\alpha+1/2,\sigma}
&\le c_2(\alpha+1/2,\theta_0,\theta,F^\lambda)\Big(e^{-2\theta_0(t_*+ t)}+ F^{2\lambda}(\theta(t_*+t))\Big)^{1/2}\\
&\le C(e^{-2\theta_0 t}+ F^{2\lambda}(\theta t))^{1/2}
\end{align*} 
for all $t\ge 0$. By setting  $\hat T =T+t_*$, this estimate implies \eqref{preus0}. The proof is complete.
\end{proof}

\begin{remark}
 Because the constants $c_0$ and $c_1$ in \eqref{Aut}, \eqref{ftt} can be small, we could not prove \eqref{preus0}  for $\lambda=1$ directly. Rather, we use  \eqref{ulin} and the estimate \eqref{wremain2} in Theorem \ref{Fode} for the linearized  NSE to improve \eqref{preus0} to \eqref{us0}.
\end{remark}

\section{General asymptotic expansions}\label{syssec}

Now, we introduce a very general definition of an asymptotic expansion in a normed space with respect to a system of time-decaying functions.

\begin{definition}\label{sys0}
Let $(\psi_n)_{n=1}^\infty$ be a sequence of non-negative functions defined on $[T_*,\infty)$ for some $T_*\in\R$ that satisfies the following two conditions:

\begin{enumerate}[label=\rm (\alph*)]
\item For each $n\in \N$,
\beq\label{limzero}
\lim_{t\to\infty}\psi_n(t)=0.
\eeq

\item For $n>m$,
\beq\label{noquot}
\psi_n(t)=o(\psi_m(t)).
\eeq
\end{enumerate}

Let $(X, \|\cdot\|)$ be a  normed space, and $g$ be a function from $[T_*,\infty)$ to $X$.
We say $g$ has an asymptotic expansion (implicitly as $t\to\infty$)
\beq\label{g0}
g(t)\sim \sum_{n=1}^\infty \xi_n \psi_n(t) \text{ in }X,
\eeq
where $\xi_n\in X$ for all $n\in\N$, 
if, for any $N\in\N$,
\beq\label{g1}
\|g(t)-\sum_{n=1}^N \xi_n\psi_n(t)\|=o(\psi_N(t)).
\eeq
\end{definition}

Obviously, if $g(t)=\sum_{n=1}^N \xi_n \psi_n(t)$ for some $N\in\N$, then
$g(t)\sim \sum_{n=1}^\infty \xi_n \psi_n(t)$ where $\xi_n=0$ for $n>N$.
In case of the infinite sum, the convergent series 
\beq\label{gequal}
g(t)=\sum_{n=1}^\infty \xi_n \psi_n(t)
\eeq 
does not necessarily imply the expansion \eqref{g0}.
We refer to Appendix \ref{apA} for some criteria for both \eqref{g0} and \eqref{gequal} to hold, with the infinite sum not reduced to a finite one.

Note that the expansion \eqref{g0} does not determine the function $g$. Indeed, if $h:[T_*,\infty)\to X$ is a function that satisfies
$\|h(t)\|=o(\psi_n(t))$  for all $n\in \N$, then both $g$ and $g+h$ have the same expansion on the right-hand side of \eqref{g0}.
The converse is considered in the next proposition.

\begin{proposition}\label{Prop62} 
Let $(\psi_n)_{n=1}^\infty$, $(X,\|\cdot\|)$ and $g$ be as in Definition \ref{sys0}. 
Suppose, for each $n\in\N$, that the function $\psi_n$ is not identically zero on $[T,\infty)$ for all $T\ge T_*$. 
Then the asymptotic expansion \eqref{g0}, if exists, is unique.
\end{proposition}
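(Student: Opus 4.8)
The plan is to prove uniqueness by induction on the index $N$, showing that if $\sum_{n=1}^\infty \xi_n\psi_n(t)$ and $\sum_{n=1}^\infty \eta_n\psi_n(t)$ are both asymptotic expansions of $g$ in the sense of \eqref{g1}, then $\xi_N = \eta_N$ for every $N$. Subtracting the two expansions and using the triangle inequality together with \eqref{g1} for both, one gets $\|\sum_{n=1}^N(\xi_n-\eta_n)\psi_n(t)\| = o(\psi_N(t))$ for every $N$.

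For the base case $N=1$, this reads $\|(\xi_1-\eta_1)\psi_1(t)\| = o(\psi_1(t))$, i.e. $\|\xi_1-\eta_1\|\,\psi_1(t) = o(\psi_1(t))$. Since $\psi_1$ is not identically zero on any $[T,\infty)$, for each $\varepsilon>0$ there is a point $t\ge T_\varepsilon$ with $\psi_1(t)>0$, forcing $\|\xi_1-\eta_1\|\le\varepsilon$; letting $\varepsilon\to 0$ gives $\xi_1=\eta_1$. For the inductive step, assume $\xi_n=\eta_n$ for $n<N$. Then the sum $\sum_{n=1}^N(\xi_n-\eta_n)\psi_n(t)$ collapses to $(\xi_N-\eta_N)\psi_N(t)$, and the estimate becomes $\|\xi_N-\eta_N\|\,\psi_N(t) = o(\psi_N(t))$. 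Again using that $\psi_N$ is not eventually identically zero, the same argument as in the base case yields $\xi_N=\eta_N$. This completes the induction, so the expansion is unique.

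The only mildly delicate point — and the one worth stating carefully — is the elementary fact that if $c\,\psi(t) = o(\psi(t))$ for a nonnegative scalar $c$ and a function $\psi$ that is not identically zero on any neighborhood of infinity, then $c=0$. This is where the hypothesis on $\psi_n$ in the statement is used; without it (e.g. if $\psi_N$ vanished identically past some $T$) the coefficient $\xi_N$ would be genuinely unconstrained. I expect this to be the main (and essentially only) obstacle, and it is really just a matter of unwinding the definition of $o(\cdot)$. Note that the induction does not even need condition \eqref{noquot} relating different $\psi_n$'s; it only uses \eqref{g1} and the non-vanishing hypothesis, which is why the proof goes through cleanly by successive cancellation.
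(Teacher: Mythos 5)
Your proof is correct and follows essentially the same route as the paper's: induction on the index, the triangle inequality applied to both expansions, cancellation of the first $N-1$ terms via the induction hypothesis, and the observation that $c\,\psi_N(t)=o(\psi_N(t))$ with $\psi_N$ not eventually zero forces $c=0$. Nothing further is needed.
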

\begin{proof}
Suppose $g(t)$ has two expansions
\beq\label{gPE}
g(t)\sim\sum_{n=1}^\infty \phi_n \psi_n(t)\text{ and }g(t)\sim\sum_{n=1}^\infty \xi_n \psi_n(t).
\eeq

We will prove by induction that $\phi_n=\xi_n$ for all $n\in\N$.

One has from the triangle inequality and each expansion in \eqref{gPE} that
\beq\label{pmx}
\|(\phi_1-\xi_1)\psi_1(t)\|\le \|\phi_1\psi_1(t)-g(t)\|+\|g(t)-\xi_1\psi_1(t)\|=o(\psi_1(t)).
\eeq

Since $\psi_1$ is asymptotically non-trivial then one can verify from \eqref{pmx} that $\phi_1=\xi_1$. 

Let $N\in \N$ and assume $\phi_n=\xi_n$ for $n=1,2,\ldots,N$.
Then
\begin{align*}
&\|(\phi_{N+1}-\xi_{N+1})\psi_{N+1}(t)\|
=\|\sum_{n=1}^{N+1}(\phi_n-\xi_n)\psi_n(t)\|\\
&\le \|\sum_{n=1}^{N+1}\phi_n\psi_n(t)-g(t)\|+\|g(t)-\sum_{n=1}^{N+1}\xi_n\psi_n(t)\|
=o(\psi_{N+1}(t)). 
\end{align*}
Hence, $\phi_{N+1}=\xi_{N+1}$. By the induction principle, $\phi_n=\xi_n$ for all $n\in\N$.
\end{proof}

Note  that the rate of convergence, as $t\to\infty$, in \eqref{g1}, in fact, can be related to the next term $\psi_{N+1}(t)$.
Indeed,
\beqs
\|g(t)-\sum_{n=1}^N \xi_n\psi_n(t)\|\le \|g(t)-\sum_{n=1}^{N+1} \xi_n\psi_n(t)\| +\psi_{N+1}(t)\|\xi_{N+1}\|.
\eeqs
Hence, we can replace equivalently \eqref{g1} by 
\beq\label{g2}
\|g(t)-\sum_{n=1}^N \xi_n\psi_n(t)\|=O(\psi_{N+1}(t)).
\eeq

The equivalence of \eqref{g1} and \eqref{g2} is essentially due to the infinite sum in \eqref{g0}.
If the sum is finite, this is no more the case.
Moreover, for general $\psi_n$'s, the relation \eqref{noquot} is not informative enough to work with. 

These prompt us to have the following more specific definition.

\begin{definition}\label{premsys}
Let $\Psi=(\psi_\lambda)_{\lambda>0}$ be a system of functions that satisfies the following two conditions.
\begin{enumerate}[label=\rm (\alph*)]
 \item There exists $T_*\ge 0$ such that, for each $\lambda>0$, $\psi_\lambda$ is a positive function defined on $[T_*,\infty)$, and 
 \beq\label{lim0}
 \lim_{t\to\infty}\psi_\lambda(t)=0.
 \eeq

 \item  For any $\lambda>\mu$, there exists $\eta>0$ such that
 \beq \label{quot0}
  \psi_\lambda(t)=\mathcal O(\psi_\mu(t)\psi_\eta(t)).
 \eeq
\end{enumerate}

Let $(X, \|\cdot\|)$ be a real normed space, and $g$ be a function from $(0,\infty)$ to $X$.
 The function $g$ is said to have the asymptotic expansion
	\beq \label{psexpand}
g(t) \simP \sum_{n=1}^\infty \xi_n \psi_{\lambda_n}(t) \text{ in } X,
\eeq
where $\xi_n\in X$ for all $n\in\N$, and $(\lambda_n)_{n=1}^\infty$ is a strictly increasing, divergent sequence of positive numbers, if it holds, for any $N\geq1$, that 
 there exists $\varep>0$ such that
\beq \label{lamrate}
\Big\|g(t)- \sum_{n=1}^N \xi_n \psi_{\lambda_n}(t)\Big\|=\mathcal O(\psi_{\lambda_N}(t)\psi_\varep(t)).
\eeq
\end{definition}

\medskip
We have the following remarks on Definition \ref{premsys}.
\begin{enumerate}[label=\rm (\alph*)]
\item If   $\lambda>\mu$, it follows \eqref{quot0} and \eqref{lim0} that
\beq \label{quot1}
\psi_\lambda(t)=o(\psi_\mu(t)).
\eeq

\item If a function $g$ has an expansion \eqref{psexpand}, then $g(t) \sim \sum_{n=1}^\infty \xi_n \psi_{\lambda_n}(t)$  in $X$ in the sense of Definition \ref{sys0}. 

\item Thanks to (b) and Proposition \ref{Prop62}, the $\xi_n$'s in \eqref{psexpand} are unique.
Similarly, following the proof of Proposition \ref{Prop62}, we also have the uniqueness of $\xi_n$'s  in \eqref{finiteps}.

\item The main difference between Definition \ref{sys0} and Definition \ref{premsys}  is the specific decaying rate $\psi_\eta(t)$ on the right-hand side of \eqref{quot0}, in contrast with the non-specific one in \eqref{noquot}.
In the proofs, this crucially allows comparisons and estimates for different quantities.
\end{enumerate}

We have the following special cases for the expansion \eqref{psexpand}.

\begin{enumerate}[label=\rm (\roman*)]

\item Assume \eqref{psexpand}. If there exists $N\in\N$, such that 
\beq\label{xicut} \xi_n=0\text{ for all }n>N,
\eeq 
then it holds for all $\lambda>0$ that 
\beq \label{finiterate}
\Big\|g(t)- \sum_{n=1}^N \xi_n \psi_{\lambda_n}(t)\Big\|=\mathcal O(\psi_\lambda(t)).
\eeq

\item Assume there exist $N\in\N$, $\xi_n\in X$ for $1\le n\le N$, and $\lambda_n$'s, for $1\le n\le N$, are positive numbers, strictly increasing in $n$ such that
\eqref{finiterate} holds for all $\lambda>0$.
We extend $\xi_n\in X$ for $1\le n\le N$ to a sequence $(\xi_n)_{n=1}^\infty$ with \eqref{xicut},
and extend $\lambda_n$ for $1\le n\le N$ to any sequence $(\lambda_n)_{n=1}^\infty$ that is a strictly increasing and divergent.
Then one can verify that \eqref{psexpand} holds true.

Therefore, we say in cases (i) and (ii) that the function $g$ has the asymptotic expansion
\beq \label{finiteps}
g(t) \simP \sum_{n=1}^N \xi_n \psi_{\lambda_n}(t) \text{ in } X.
\eeq

\item If $\xi_n=0$ for all $n\in N$ in \eqref{psexpand}, then 
\beq \label{zeroexp}
\|g(t)\|=\mathcal O(\psi_\lambda(t))
\eeq
for all $\lambda>0$.

\item Assume \eqref{zeroexp} holds for all $\lambda>0$. Let $\xi_n=0$ for all $n\in\N$. Let $(\lambda_n)_{n=1}^\infty$ be any strictly increasing, divergent sequence of positive numbers.
Then we have \eqref{psexpand}.

Therefore, we say in cases (iii) and (iv) that the function $g$ has the asymptotic expansion
\beqs
g(t) \simP 0  \text{ in } X.
\eeqs

\item For $N=0$, we conveniently set the sum $\sum_{n=1}^N \xi_n \psi_{\lambda_n}(t)$ to be zero in \eqref{finiteps}, and see that the condition \eqref{finiterate}  is, in fact, \eqref{zeroexp}.
Thus the expression 
\beqs 
g(t) \simP \sum_{n=1}^0 \xi_n \psi_{\lambda_n}(t)\quad \text{ will mean  }\quad g(t) \simP 0.
\eeqs 

\item If a function $g$ has an asymptotic expansion
\beqs
g(t) \simP \sum_{n=1}^N \xi_n \psi_{\lambda_n}(t) \text{ in } X,
\eeqs
for $N\in\N\cup\{0,\infty\}$, then by remark (c) above, this expansion is unique for $g$.
\end{enumerate}

\medskip
For solutions of ODEs or PDEs the linear and nonlinear structures of the equations will impose more conditions on the system. We consider below the ones that are appropriate to our current study of the NSE.

\begin{condition}\label{sysdef} The system $\Psi=(\psi_\lambda)_{\lambda>0}$ satisfies {\rm (a)} and {\rm (b)} in Definition \ref{premsys} and the following. 
\begin{enumerate}[label=\rm (\roman*)]
 \item For any $\lambda,\mu>0$, there exist $\gamma > \max\{\lambda,\mu\}$ and a nonzero constant $d_{\lambda,\mu}$ such that 
 \beq\label{triple} \psi_\lambda\psi_\mu=d_{\lambda,\mu}\psi_\gamma.
 \eeq 
 
 \item For each $\lambda>0$, the function $\psi_\lambda$ is continuous and differentiable on $[T_*,\infty)$, and  its derivative $\psi_\lambda'$ has an expansion in the sense of Definition \ref{premsys}
 \beq\label{ppex}
 \psi_\lambda'(t)\simP \sum_{k=1}^{N_\lambda} c_{\lambda,k}\psi_{\lambda^\vee(k)}(t) \text{ in }\R,
 \eeq
  where $N_\lambda\in\N\cup \{0,\infty\}$, all $c_{\lambda,k}$ are constants,  all $\lambda^\vee(k)>\lambda$,
  and, for each $\lambda>0$,  $\lambda^\vee(k)$'s are strictly increasing in $k$.
\end{enumerate}
\end{condition}

 The following remarks on Condition \ref{sysdef} are in order.
\begin{enumerate}[label=\rm (\alph*)]
\item  By \eqref{quot1}, the numbers $\gamma$ and $d_{\lambda,\mu}$ in \eqref{triple} are unique. 


\item By  \eqref{triple}, we have the reverse of \eqref{quot0} in the following sense:
 \beq\label{tq}
 \psi_\mu(t)\psi_\eta(t)=\mathcal O(\psi_       \lambda(t))\text{ for some }\lambda>\mu.
 \eeq

\item Thanks to \eqref{quot0} and \eqref{tq}, the condition \eqref{lamrate} is equivalent to 
\beqs 
\Big\|g(t)- \sum_{n=1}^N \xi_n \psi_{\lambda_n}(t)\Big\|=\mathcal O(\psi_\lambda(t)),
\eeqs
for some $\lambda>\lambda_N$.
 
\item Expansion \eqref{ppex} of $\psi_\lambda'$, in all cases of $N_\lambda$, and property \eqref{quot0}  imply that there exists $\eta>0$ such that
\beq \label{psi-prime}
|\psi_{\lambda}'(t)| =  \mathcal O(\psi_{\lambda}(t)\psi_{\eta}(t)).
\eeq

\item If, instead of \eqref{ppex}, $\psi_\lambda'=c_\lambda \psi_\lambda$ for all $\lambda$, then $\psi_\lambda$'s are exponential functions. This case was studied in \cite{HM2}. For other examples of \eqref{ppex}, see subsections \ref{ss3} and \ref{ss4} below.

\end{enumerate}

\begin{notation}
Denote $\gamma$ in \eqref{triple} by $\lambda\wedge \mu$, which is uniquely determined thanks to remark (a) above. 
\end{notation}

For the current study, we focus on decaying functions that are larger than the exponentially decaying ones, hence we impose more specific conditions.

\begin{condition}\label{weaksys} The system $\Psi=(\psi_\lambda)_{\lambda>0}$  satisfies {\rm(a)}, {\rm(b)} of Definition \ref{premsys}, and the following.
\begin{enumerate}[label=\rm (\roman*)]
 \item For each $\lambda>0$, the function $\psi_\lambda$ is decreasing (in $t$).

\item If $\lambda,\alpha>0$ then 
 \beq\label{expops} e^{-\alpha t}=o(\psi_\lambda(t)).
 \eeq

 \item For any number $a\in(0,1)$,
 \beq\label{shiftlim}
\psi_\lambda(at)=\mathcal O(\psi_\lambda(t)).
\eeq 
\end{enumerate}
\end{condition}

The followings are direct consequences of Condition \ref{weaksys}.
\begin{enumerate}[label=\rm (\alph*)]
\item By \eqref{expops}, for any $\alpha,\lambda>0$, there exists a positive constant $C_{\alpha,\lambda}$ such that
\beqs
e^{-\alpha (t+T_*)}\le C_{\alpha,\lambda} \psi_\lambda(t+T_*)\quad\forall t\ge 0,
\eeqs
hence, by denoting $D_1(\lambda,\alpha)=e^{\alpha T_*} C_{\lambda,\alpha} $, we have 
\beq\label{mxlam}
e^{-\alpha t}\le D_1(\lambda,\alpha) \psi_\lambda(t+T_*).
\eeq

\item For $a\in(0,1)$, we have from (i) that $\psi_\lambda(t)=\mathcal O(\psi_\lambda(at))$.
Thus, the condition \eqref{shiftlim}, in fact, is equivalent to 
\beqs
\psi_\lambda(at)\Oeq \psi_\lambda(t).
\eeqs

\item  Property \eqref{shiftlim} and the decrease of $\psi_\lambda(t)$ in $t$ imply, for $a\in(0,1)$, that
\beqs 
\psi_\lambda(at)\le D_2(a,\lambda) \psi_\lambda(t)\quad \forall t\ge T_*/a,
\eeqs
where $D_2(a,\lambda)$ is a constant in $[1,\infty)$.
Consequently, for $a\in (0,1)$ and $t\ge 0$, 
\beq \label{ps7}
\psi_\lambda(at+T_*)=\psi_\lambda(a(t+T_*/a))\le D_2(a,\lambda) \psi_\lambda(t+T_*/a)
\le D_2(a,\lambda) \psi_\lambda(t+T_*).
\eeq
Then by the decrease of $\psi_\lambda$ in $t$, we have 
\beq \label{ps5}
\psi_\lambda(at+T_*)\le D_2(a,\lambda) \psi_\lambda(t)\quad\forall t\ge T_*.
\eeq

In particular, for any $T\ge 0$ and $t\ge 2(T_*+T)$, we have $t-T\ge t/2+T_*$, then by \eqref{ps5},
\beqs 
\psi_\lambda(t-T)\le \psi_\lambda(t/2+T_*)\le  D_3(\lambda) \psi_\lambda(t),
\text{ where } D_3(\lambda)=D_2(1/2,\lambda). 
\eeqs
Combining this with the boundedness of $\psi_\lambda(t-T)/\psi_\lambda(t)$ for small $t\in[T_*+T,2(T_*+T)]$ we obtain
\beqs 
\psi_\lambda(t-T)\le D_4(\lambda,T) \psi_\lambda(t)\quad\forall t\ge T_*+T,
\eeqs
which yields
\beqs 
\psi_\lambda(t)\le D_4(\lambda,T) \psi_\lambda(t+T)\quad\forall t\ge T_*,
\eeqs
for some positive constant $D_4(\lambda,T)$. Consequently, for any $T\in\R$,
\beq\label{ps2}
\psi_\lambda(t)\Oeq \psi_\lambda(t+T).
\eeq
\end{enumerate}

In applications to the NSE, suppose that the force $f$ has an expansion containing the terms $\psi_{\gamma_n}$'s for some numeric sequence $(\gamma_n)_{n=1}^\infty$. 
Then the operators in the NSE require that a solution $u(t)$, in case of having an expansion itself, may need many more terms in addition to $\psi_{\gamma_n}$'s. 
We describe below a general principle to find those other terms.

For any $x\in (0,\infty)$, define the set
\beqs
G_x=\begin{cases}
     \emptyset,&\text{ if } N_x=0,\\
     \{x^\vee(k):1\le k\le N_x\},&\text{ if }N_x\in \N,\\
     \{x^\vee(k):k\in\N\},&\text{ if }N_x=\infty.
    \end{cases}
\eeqs

A non-empty subset $S$ of $(0,\infty)$ is said to preserve the operation $\vee$ if
\beq \label{S1}
\forall x\in S: G_x\subset S.
\eeq 

Similarly, $S$ is said to preserve the operation $\wedge$ if
\beq \label{S2}
\forall x,y\in S: x\wedge y\in S.
\eeq

\begin{lemma}\label{Slem}
Let $S$ be any non-empty subset of $(0,\infty)$.
\begin{enumerate}[label=\rm (\roman*)]
 \item There exists a smallest set $S_*\subset(0,\infty)$ that contains $S$, and preserves the operations $\vee$ and $\wedge$.
 \item In fact, $S_*=S^*$, where $S^*$ is constructed explicitly in \eqref{Sdef1} below.
\end{enumerate}
\end{lemma}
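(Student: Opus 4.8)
The natural strategy is a standard closure construction: build $S^*$ as an increasing union of sets obtained by alternately applying the operations $\vee$ and $\wedge$ to $S$, then verify that (1) $S^*$ contains $S$, (2) $S^*$ preserves both operations, and (3) $S^*$ is contained in every set with these two properties, which gives both minimality and the identity $S_* = S^*$.

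\textbf{Construction.} I would define inductively a nested sequence of subsets of $(0,\infty)$. Start with $S_0 = S$. Given $S_k$, let
\beqs
S_{k+1} = S_k \cup \Big(\bigcup_{x\in S_k} G_x\Big) \cup \{\, x\wedge y : x,y\in S_k \,\},
\eeqs
and then set $S^* = \bigcup_{k=0}^\infty S_k$. (This is the set referred to as being ``constructed explicitly in \eqref{Sdef1}'' — in the actual paper that display would be this formula, possibly with the two operations alternated one at a time rather than applied simultaneously; either bookkeeping works.) Clearly $S = S_0 \subseteq S^*$, giving containment.

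\textbf{Verification that $S^*$ preserves $\vee$ and $\wedge$.} Suppose $x\in S^*$. Then $x\in S_k$ for some $k$, so $G_x\subseteq S_{k+1}\subseteq S^*$, establishing \eqref{S1}. Similarly, if $x,y\in S^*$, choose $k$ large enough that both lie in $S_k$ (possible since the $S_k$ are nested); then $x\wedge y\in S_{k+1}\subseteq S^*$, establishing \eqref{S2}.

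\textbf{Minimality.} Let $T$ be any subset of $(0,\infty)$ with $S\subseteq T$ that preserves both operations. I claim $S_k\subseteq T$ for all $k$, by induction on $k$. The base case $S_0 = S\subseteq T$ is the hypothesis. For the inductive step, assume $S_k\subseteq T$. For each $x\in S_k\subseteq T$, property \eqref{S1} for $T$ gives $G_x\subseteq T$; and for each $x,y\in S_k\subseteq T$, property \eqref{S2} for $T$ gives $x\wedge y\in T$. Hence $S_{k+1}\subseteq T$. Taking the union, $S^*\subseteq T$. Since $S^*$ itself is a set containing $S$ and preserving both operations, it follows that $S^*$ is the smallest such set; in particular the smallest set $S_*$ exists and equals $S^*$, proving both (i) and (ii).

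\textbf{Main obstacle.} The argument above is essentially routine once the right construction is written down, so there is no deep difficulty; the one place that needs care is making sure the inductive construction genuinely closes up — i.e., that an element produced at stage $k+1$ has \emph{its} images under $\vee$ and $\wedge$ appearing by stage $k+2$, which is exactly why one must take the countable union rather than stopping at a finite stage. A secondary point worth stating cleanly is that the family $\{S_k\}$ is nested (immediate from the formula, since $S_k$ appears as a summand defining $S_{k+1}$), which is what lets one find a single index $k$ working for any finite collection of elements of $S^*$ when checking closure under $\wedge$. Beyond that, one should note that $\vee$ and $\wedge$ are genuinely defined on all of $(0,\infty)$ — the values $x^\vee(k)$ come from Condition \ref{sysdef}(ii) and $x\wedge y$ from Condition \ref{sysdef}(i), both of which apply to arbitrary positive parameters — so no issue of the operations leading outside $(0,\infty)$ arises.
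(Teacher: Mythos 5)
Your proof is correct and follows essentially the same closure construction as the paper: build $S^*$ as an increasing union of stages obtained by applying $\vee$ and $\wedge$ to $S$, verify $S^*$ is closed under both operations, and show by induction that $S^*$ sits inside any set containing $S$ with both closure properties. The only differences are cosmetic — the paper alternates the two operations one per stage and realizes $S_*$ as the intersection of all such sets before identifying it with $S^*$, whereas you apply both operations at each stage and deduce minimality directly — and neither affects the argument.
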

\begin{proof}
(i) For any non-empty subset $M$ of $(0,\infty)$, we denote
 \begin{align*}
 M^{\wedge}&= \{x\wedge y:x,y\in M\},\\
 M^{\vee}&=\bigcup_{x\in M} G_x. 
 \end{align*}

(a) Let $S_0=S$. We define recursively the sets $S_n$, for $n\in\N$, by
\beqs
S_{2k+1}=S_{2k} \cup S_{2k}^{\vee}\text{ and } S_{2k+2}=S_{2k+1} \cup S_{2k+1}^{\wedge}\text{ for }k\ge 0.
\eeqs

Define 
\beq\label{Sdef1} S^*=\bigcup_{n=0}^\infty S_{n}.
\eeq 

We obviously have
\beq\label{Skseq} S_{2k}\subset S_{2k+1}\subset S_{2k+2}\subset S_{2k+3}\quad\forall k\ge 0.
\eeq

It follows \eqref{Skseq} that $(S_n)$, $(S_{2n})$ and $(S_{2n+1})$ are increasing sequences, and, hence,
\beq\label{Ssdef}
S^*=\bigcup_{k=0}^\infty S_{2k}=\bigcup_{k=0}^\infty S_{2k+1}.
\eeq

Clearly, $S=S_0\subset S^*$.  Next, we prove  $S^*$ preserves the operations $\vee$ and $\wedge$.

Let $x\in S^*$. By \eqref{Ssdef},  $x\in S_{2k}$ for some $k\ge 0$. Then obviously by definition $G_x \subset S_{2k}^{\vee} \subset S_{2k+1}\subset S^*$. Thus, $G_x\subset S^*$.

For $x,y\in S^*$, then by \eqref{Ssdef}, $x\in S_{2k+1}$, $y\in S_{2m+1}$  for some $k,m\ge 0$. Assume $k\ge m$, then $y\in S_{2k+1}$ by \eqref{Skseq}. This implies $x\wedge y\in S_{2k+2}\subset S^*$.

(b) Let  $\mathcal C$ be the collection  of sets $M$ that contain $S$ and  preserve the operations $\vee$ and $\wedge$.
Because $S^*\in\mathcal C$, then the collection $\mathcal C$ is non-empty.

Let $S_*$ be the intersections of all the elements in $\mathcal C$.
Then $S_* \subset S^*$. Let $M \in \mathcal C$, properties \eqref{S1} and \eqref{S2} for $S:=M$ clearly imply
\beq \label{SSS}
M^{\vee} \subset M \text{ and }M^{\wedge} \subset M.
\eeq
Thus, 
\beqs
(S_*)^\vee\subset M^{\vee} \subset M \quad \text{ and } (S_*)^{\wedge}\subset M^{\wedge} \subset M.
\eeqs
It follows that 
\beqs
(S_*)^\vee\subset \bigcap_{M\in C}M=S_* \quad \text{ and } (S_*)^{\wedge}\subset \bigcap_{M\in C}M=S_*.
\eeqs
Therefore, $S_*\in \mathcal C$. By its definition, $S_*$ is the smallest set in $\mathcal C$.

(ii) We prove $S_*=S^*$. It suffices to show $S^* \subset S_*$.

Let  $M$ be an arbitrary element in $\mathcal C$. We shall show that $S^* \subset M$. First, we see that 
$S_0 \subset M$. By \eqref{SSS},
\beqs
S_1=S_0 \cup S_0^{\vee} \subset M \cup M^{\vee}=M,
\eeqs
and then,
\beqs
S_2=S_1 \cup S_1^{\wedge} \subset M \cup M^{\wedge}=M.
\eeqs
By induction, we can prove similarly that $S_k \subset M$ for all $k$.  Therefore $S^*=\bigcup_{k=0}^\infty S_{k} \subset M$. 

Then $S^*\subset \bigcap_{M\in \mathcal C} M=S_*$. This completes the proof of the lemma.
\end{proof}

\begin{notation}
 We denote the set $S_*$ in Lemma \ref{Slem} by $\mathcal G_\Psi(S)$.
\end{notation}

\section{Asymptotic expansions in a continuum system}\label{expsec}

Let $\Psi=(\psi_\lambda)_{\lambda>0}$ be a system of functions that satisfies both Conditions \ref{sysdef} and \ref{weaksys}.

\begin{assumption}\label{B1} 
Suppose there exist  real numbers $\sigma\ge 0$, $\alpha\ge 1/2$,  a strictly increasing, divergent sequence of positive numbers $(\gamma_n)_{n=1}^\infty$ and a sequence $(\tilde \phi_n)_{n=1}^\infty$ in $G_{\alpha,\sigma}$ such that, in the sense of Definition \ref{premsys},
\beq\label{fseq}
f(t)\simP \sum_{n=1}^\infty \tilde \phi_n \psi_{\gamma_n}(t) \text{ in }G_{\alpha,\sigma}.
\eeq
\end{assumption}

Note from \eqref{fseq} that  $f(t)$ belongs to $G_{\alpha,\sigma}$ for all $t$ sufficiently large.

Let  $u(t)$ be a Leray-Hopf weak solution of the NSE. We search for an asymptotic expansion of $u(t)$ in the form
\beq\label{try}
u(t)\simP \sum_{n=1}^\infty \xi_n \psi_{\lambda_n}(t). 
\eeq
Formally substituting expansion \eqref{try} into the NSE \eqref{fctnse}, we find that the indices $\lambda_n$'s naturally take values in the set
\beq\label{SG}
\mathcal G_\Psi(\{\gamma_n:n\in\N\}).
\eeq
However, the expansion \eqref{try} only agrees with \eqref{psexpand} in Definition \ref{premsys} if the set in \eqref{SG} does not have a finite cluster point. Therefore, we impose one more condition.

\begin{assumption}\label{C1} 
There exists a set $S_*$ that contains $\{\gamma_n:n\in\N\}$,  preserves the operations $\vee$ and $\wedge$, and can be ordered so that
\beq \label{museq}
S_*=\{\lambda_n:n\in\N\}, \text{ where $\lambda_n$'s are strictly increasing to infinity.}
\eeq 
\end{assumption}

We usually choose $S_*$ in Assumption \ref{C1} to be \eqref{SG}, but this is not the only choice.

Under Assumption \ref{C1}, we can show that the expansion \eqref{fseq} implies
\beq \label{fexp2}
f(t)\simP \sum_{n=1}^{\infty} \phi_n \psi_{\lambda_n}(t)\quad \text{in }G_{\alpha,\sigma}\quad\text{as } t\to\infty,
\eeq
where the sequence $(\phi_n)_{n=1}^\infty$ in $G_{\alpha,\sigma}$ is defined by $\phi_n=\tilde \phi_k$ if there exists $k\ge 1$ such that $\lambda_n=\gamma_k$, and $\phi_n=0$ otherwise. Note in the former case that such an index $k$, when exists, is unique.

\begin{remark}\label{sameexp}
In case the set $S=\{\gamma_n:n\in\N\}$  itself preserves the operations $\vee$ and $\wedge$, then 
$S=\mathcal G_{\Psi}(S)$. Hence, Assumption \ref{C1} is met with $S_*=S$ and \eqref{fexp2} holds with $\lambda_n=\gamma_n$, $\phi_n=\tilde\phi_n$, i.e., expansion \eqref{fexp2} is just the original \eqref{fseq}.
\end{remark}

Our first main result on the expansion of the Leray-Hopf weak solutions is the following.

\begin{theorem}\label{mainthm}
Let Assumptions \ref{B1} and \ref{C1} hold true, and let $f$ have the asymptotic expansion  \eqref{fexp2}. Then any Leray-Hopf weak solution $u(t)$  of \eqref{fctnse} 
has the asymptotic expansion
 \beq\label{uexpand}
u(t)\simP  \sum_{n=1}^\infty  \xi_n \psi_{\lambda_n}(t)\quad \text{in }G_{\alpha+1-\rho,\sigma}\text{ for all } \rho \in (0,1),
 \eeq
 where $\xi_n$'s are defined recursively by  
\begin{align} \label{xi1}
\xi_1&=A^{-1}\phi_1,\\
\label{xin1}
\xi_n&=A^{-1}\Big(\phi_n - \chi_n - \sum_{\stackrel{1\le k,m\le n-1,}{\lambda_k\wedge\lambda_m=\lambda_n}}d_{\lambda_k,\lambda_m}B(\xi_k,\xi_m)\Big) \quad\text{for } n \ge 2,
\end{align}
where 
\beq \label{chin}
\chi_n=
\begin{cases}
\begin{displaystyle}\sum_{\stackrel{(p,k)\in[1,n-1]\times \N:}{\lambda_p^\vee(k)=\lambda_n}}c_{\lambda_p,k} \xi_p\end{displaystyle},
& \text{if }\exists p\in [1, n-1],k\in\N:\lambda_p^\vee(k)=\lambda_n,\\
\qquad 0,&\text{otherwise}.
\end{cases}
\eeq 
\end{theorem}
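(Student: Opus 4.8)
The plan is to prove the expansion \eqref{uexpand} by induction on the truncation level $N$, using Theorem \ref{Fthm2} to initialize and Theorem \ref{Fode} (part (iii), via the linearization \eqref{ulin}) as the engine of the inductive step. First I would fix $\rho\in(0,1)$ and record the base estimate: by Assumption \ref{B1} and Theorem \ref{Fthm2} applied with $F(t)=\psi_{\gamma_1}(t)$ (which satisfies \eqref{Fzero}, \eqref{eF} via \eqref{mxlam}, and \eqref{Fa} via \eqref{ps5}), there is $\hat T>0$ so that $u(t)$ is regular on $[\hat T,\infty)$ and $|u(\hat T+t)|_{\alpha+1-\rho,\sigma}=\mathcal O(\psi_{\gamma_1}(t))$; combined with \eqref{ps2} this gives $\|u(t)\|_{\alpha+1-\rho,\sigma}=\mathcal O(\psi_{\lambda_1}(t))$, which is the ``$N=0$'' case of \eqref{uexpand} (with the convention from remark (vi) in Section~\ref{syssec}). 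Henceforth all estimates are for $t\ge\hat T$ and I work with $v(t)=u(\hat T+t)$, which solves the linearized equation $v_t+Av=\tilde f$ with $\tilde f(t)=-B(v(t),v(t))+f(\hat T+t)$.

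The inductive step is the heart of the argument. Assume
\beqs
\Big\|u(t)-\sum_{n=1}^{N}\xi_n\psi_{\lambda_n}(t)\Big\|_{\alpha+1-\rho,\sigma}=\mathcal O(\psi_{\lambda_N}(t)\psi_\varep(t))
\eeqs
for some $\varep>0$, for every choice of $\rho$ (one runs the induction for a generic $\rho$, shrinking it when needed since the $G_{\alpha+1-\rho,\sigma}$ are decreasing). To produce the $(N+1)$-st term, substitute the partial sum $S_N(t)=\sum_{n=1}^N\xi_n\psi_{\lambda_n}(t)$ into the equation: write $w=u-S_N$ and derive $w_t+Aw=g_N(t)$ where $g_N(t)=f(t)-S_N'(t)-AS_N(t)-B(u,u)$. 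Now expand each piece in the system $\Psi$. Using \eqref{ppex} for $\psi_{\lambda_n}'$, \eqref{triple} for the products $\psi_{\lambda_k}\psi_{\lambda_m}$ appearing in $B(S_N,S_N)$, the expansion \eqref{fexp2} of $f$, and bilinearity to split $B(u,u)=B(S_N,S_N)+B(S_N,w)+B(w,S_N)+B(w,w)$, one checks — and this is where \eqref{S1}, \eqref{S2} and the definition \eqref{chin} of $\chi_n$, together with the recursion \eqref{xin1}, are engineered precisely so that all contributions with index $\le\lambda_N$ cancel — that $g_N(t)=\phi_{N+1}^{\mathrm{eff}}\psi_{\lambda_{N+1}}(t)+(\text{remainder})$, where the effective coefficient is exactly $\phi_{N+1}-\chi_{N+1}-\sum d_{\lambda_k,\lambda_m}B(\xi_k,\xi_m)=A\xi_{N+1}$, and the remainder is $\mathcal O(\psi_{\lambda_{N+1}}(t)\psi_{\varep'}(t))$ for some $\varep'>0$ (the cross terms $B(S_N,w)$, $B(w,S_N)$, $B(w,w)$ are absorbed here using the inductive bound on $w$ in $\alpha+1/2$-level norms, \eqref{AalphaB}, and \eqref{quot0}). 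Then apply Theorem \ref{Fode}(iii) with $\xi=\phi_{N+1}^{\mathrm{eff}}$, $f:=g_N-\phi_{N+1}^{\mathrm{eff}}\psi_{\lambda_{N+1}}$, and $F(t)=\psi_{\lambda_{N+1}}(t)\psi_{\varep'}(t)$: estimate \eqref{wremain2} yields $\|w(t)-\xi_{N+1}\psi_{\lambda_{N+1}}(t)\|_{\alpha+1-\rho,\sigma}=\mathcal O(\psi_{\lambda_{N+1}}(t)\psi_{\varep'}(t))$, i.e. the $(N+1)$-st instance of \eqref{uexpand}. One must verify $F=\psi_{\lambda_{N+1}}\psi_{\varep'}$ still satisfies \eqref{eF} and \eqref{Fa}, which follows from \eqref{mxlam}, \eqref{ps5} and the fact that this product is again $\mathcal O(\psi_\lambda)$ for suitable $\lambda$ by \eqref{tq}.

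There is one more regularity point to settle before the induction closes: Theorem \ref{Fode}(iii) gains one power of $A$ (from $\alpha$ to $\alpha+1-\rho$), but to feed the nonlinear term $B(S_N,w)$ etc. back into the next step I need $w$ controlled at level $\alpha+1/2$, which is fine as long as $\alpha+1/2\le\alpha+1-\rho$, i.e. $\rho\le 1/2$ — so I simply carry the induction with $\rho$ ranging over $(0,1/2]$, and at the very end use that the expansion at level $\alpha+1-\rho$ for small $\rho$ automatically implies it at every larger $\rho$ (spaces decreasing). The main obstacle, and the step I expect to require the most care, is the bookkeeping in the inductive step: verifying that after substituting $S_N$ into all the linear and quadratic operators, the totality of terms of ``order $\psi_{\lambda_j}$'' for each $j\le N$ vanishes identically by virtue of \eqref{xi1}–\eqref{chin}, and that the leftover is genuinely of strictly smaller order $\mathcal O(\psi_{\lambda_{N+1}}\psi_{\varep'})$ rather than merely $\mathcal O(\psi_{\lambda_{N+1}})$ — this requires using \eqref{quot0} to extract a gain $\psi_\eta$ from every product and from $\psi_\lambda'$, and then invoking Appendix \ref{apA}-type criteria (or directly \eqref{triple} and the strict monotonicity of the $\lambda_n$) to confirm that finitely many such terms regroup into a convergent-order remainder. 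Everything else — the initialization, the passage from $v(t)=u(\hat T+t)$ back to $u(t)$ via \eqref{ps2}, and uniqueness of the $\xi_n$ (already furnished by Proposition \ref{Prop62} through remark (c))—is routine.
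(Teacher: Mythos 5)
Your overall architecture (initialize with Theorem \ref{Fthm2}, rewrite the NSE as a linearized equation for the remainder $w=u-S_N$, cancel all terms of order $\le\lambda_N$ via the recursion \eqref{xi1}--\eqref{chin}, and close the induction with Theorem \ref{Fode}(iii)) matches the paper's, and your treatment of the base bound, the cross terms $B(S_N,w)$, $B(w,S_N)$, $B(w,w)$, and the $\rho\le 1/2$ bookkeeping is sound. But there is a genuine gap at the single most important step: the way you invoke Theorem \ref{Fode}(iii) to extract the $(N+1)$-st coefficient. That theorem concerns $w'=-Aw+\xi+f$ with a \emph{time-independent} $\xi\in G_{\alpha,\sigma}$ and concludes $|w(t)-A^{-1}\xi|\le CF(t)$, i.e.\ convergence to a \emph{constant} limit. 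Your remainder equation is $w'+Aw=A\xi_{N+1}\psi_{\lambda_{N+1}}(t)+h(t)$, whose leading source term is a decaying profile, not a constant. Setting ``$\xi=\phi_{N+1}^{\mathrm{eff}}$, $f:=g_N-\phi_{N+1}^{\mathrm{eff}}\psi_{\lambda_{N+1}}$'' produces the equation $w'+Aw=\phi_{N+1}^{\mathrm{eff}}+h$, which is not the equation $w$ satisfies (they differ by $\phi_{N+1}^{\mathrm{eff}}(1-\psi_{\lambda_{N+1}}(t))$, which does not decay), and the conclusion \eqref{wremain2} would in any case read $\|w(t)-\xi_{N+1}\|=\mathcal O(F(t))$, not $\|w(t)-\xi_{N+1}\psi_{\lambda_{N+1}}(t)\|=\mathcal O(F(t))$. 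The alternative reading, $\xi=0$ and $f:=g_N$ with $F=\psi_{\lambda_{N+1}}\psi_{\varep'}$, violates hypothesis \eqref{fF}, since $g_N$ contains $A\xi_{N+1}\psi_{\lambda_{N+1}}$, which is only $\mathcal O(\psi_{\lambda_{N+1}})$.

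The missing device is the rescaling the paper uses: set $w_{N+1}(t)=\psi_{\lambda_{N+1}}(t)^{-1}v_N(t)$, so that the equation becomes $w_{N+1}'=-Aw_{N+1}+A\xi_{N+1}+H_{N+1}(t)$ with a genuinely constant $\xi=A\xi_{N+1}$, to which Theorem \ref{Fode}(iii) applies and yields $w_{N+1}(t)\to A^{-1}(A\xi_{N+1})=\xi_{N+1}$ at rate $\mathcal O(\psi_{\varep_{N+1}})$; multiplying back by $\psi_{\lambda_{N+1}}$ gives the inductive claim. This rescaling introduces the extra term $\psi_{\lambda_{N+1}}'\psi_{\lambda_{N+1}}^{-2}v_N$ into $H_{N+1}$, and to see that it is $\mathcal O(\psi_{\varep})$ one needs \eqref{psi-prime} \emph{together with} a preliminary coarse bound $|v_N(t)|_{\alpha+1-\rho,\sigma}=\mathcal O(\psi_{\lambda_{N+1}}(t))$, which the paper obtains first by applying Theorem \ref{Fode}(iii) to the unrescaled equation with $\xi=0$ and $F=\psi_{\lambda_{N+1}}$. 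So each inductive step is a two-stage bootstrap (coarse bound, then rescale and refine), and the same mechanism is already needed at $N=1$ to pass from $|u|=\mathcal O(\psi_{\lambda_1})$ to $u\approx\xi_1\psi_{\lambda_1}$. Without it, your induction does not close.
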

\begin{proof}
The proof is divided into parts A, B, $\ldots$, steps 1,2, and substeps (a), (b), $\ldots$

\medskip\textbf{A.} Notation. For $n\in\N$, denote
\begin{align*}
&F_n(t)=\phi_n \psi_{\lambda_n}(t),\quad \bar F_n(t)=\sum_{j=1}^n F_j(t),\quad \text{and}\quad \tilde F_n(t)=f(t)-\bar F_n(t),\\
&u_n(t)=\xi_n \psi_{\lambda_n}(t),\quad \bar u_n(t)=\sum_{j=1}^n u_j(t),\quad\text{and}\quad v_n=u(t)-\bar u_n(t).
\end{align*}

According to the expansion \eqref{fexp2} and Definition \ref{premsys}, we can assume that  
\beq \label{Fcond}
|\tilde F_N(t)|_{\alpha,\sigma}=\mathcal O(\psi_{\lambda_N}(t)\psi_{\delta_N}(t)) ,
\eeq
for any $N\in\N$, with some $\delta_N>0$.

\medskip\textbf{B.} We observe that
 \beq\label{ximore}
 \xi_n\in  G_{\alpha+1,\sigma}\quad \forall n \ge 1.
 \eeq
The proof of \eqref{ximore} is by induction and is the same as in \cite[Lemma 4.2]{CaH1}.

By \eqref{ximore}, we have
\beq\label{ubarate}
|\bar u_n(t)|_{\alpha+1,\sigma}=\mathcal O(\psi_{\lambda_1}(t))\quad\forall n\in\N.
\eeq

\medskip\textbf{C.} As a preparation, we need to establish the large time decay for $u(t)$ first. 
Letting $N=1$ in \eqref{Fcond} gives
$|f(t)- \phi_1 \psi_{\lambda_1}(t)|_{\alpha,\sigma}=\mathcal O(\psi_{\lambda_1}(t)\psi_{\delta_1}(t) ),$
which implies
\beqs 
|f(t)|_{\alpha,\sigma}=\bigo(\psi_{\lambda_1}(t) )=\mathcal O(\psi_{\lambda_1}(t+T_*)).
\eeqs
The last relation is due to \eqref{ps2}. 

Let $F(t)=\psi_{\lambda_1}(t+T_*)$. 
Then $|f(t)|_{\alpha,\sigma}=\bigo(F(t))$, and, by \eqref{mxlam} and \eqref{ps7}, the function $F$ satisfies \eqref{eF} and \eqref{Fa}.
We now apply Theorem \ref{Fthm2} with $\varep=1/2$. 
Then  there exists  time $\hat{T}>0$ and a constant $C>0$ 
such that $u(t)$ is a regular solution of \eqref{fctnse} on $[\hat{T},\infty)$, and 
 \beq\label{ups0}
 |u(\hat{T}+t)|_{\alpha+1/2,\sigma} \le C\psi_{\lambda_1}(t+T_*)  \quad\forall t\ge 0.
 \eeq
It follows \eqref{AalphaB} and \eqref{ups0} that 
 \beq\label{Blt}
|B(u(\hat{T}+t),u(\hat{T}+t))|_{\alpha,\sigma}\le  C|u(\hat{T}+t)|_{\alpha+1/2,\sigma}^2 \le C\psi_{\lambda_1}^2(t+T_*) \quad\forall t\ge 0.
\eeq

\medskip\textbf{D.} It suffices to prove, for any $N\in\N$, that there exists a number $\varep_N>0$ such that
 \beq\label{remdelta}
|v_N(t)|_{\alpha,\sigma} =\mathcal O(\psi_{\lambda_N}(t)\psi_{\varep_N}(t) ).
 \eeq
 
We will prove \eqref{remdelta} by induction in $N$.
In calculations below, all differential equations hold in $V'$-valued distribution sense on $(T,\infty)$ for any $T>0$, which is similar to \eqref{varform}.
One can easily verify them by using \eqref{Bweak}, and the facts $u\in L^2_{\rm loc}([0,\infty),V)$ and $u'\in L^1_{\rm loc}([0,\infty),V')$ in Definition \ref{lhdef}.

\medskip
\textit{Step 1: $N=1$.} Define $w_1(t)=\psi_{\lambda_1}^{-1}(t) u(t)$. 

(a) Equation for $w_1(t)$. We have
\begin{align*}
 w_1'(t)&=\psi_{\lambda_1}^{-1}(t) u'(t) - \psi_{\lambda_1}^{-2}(t) \psi_{\lambda_1}'(t)u(t)\\
 &=\psi_{\lambda_1}^{-1}(t) \big(-Au(t)-B(u,u)+ \phi_1 \psi_{\lambda_1}(t)+\tilde F_1(t)\big) - \psi_{\lambda_1}^{-2}(t) \psi_{\lambda_1}'(t)u(t)
\end{align*}
Thus,
\beq\label{wj}
w_1'(t) + A w_1(t) = \phi_1 + H_1(t),\quad t>T_*,
\eeq
where
\beqs
H_1(t)=\psi_{\lambda_1}^{-1} [ \tilde F_1(t) - B(u(t),u(t))]- \psi_{\lambda_1}' \psi_{\lambda_1}^{-2}u(t).
\eeqs

(b) Estimation of $|H_1(t)|_{\alpha,\sigma}$. By estimates \eqref{ups0}, \eqref{Blt}, and the relations in \eqref{ps2}, we have 
\beq\label{u-1-step}
|u(t)|_{\alpha+1/2,\sigma}=\bigo(\psi_{\lambda_1}(t) ).
\eeq
\beq\label{buu-1-step}
|B(u(t),u(t)))|_{\alpha,\sigma}=\bigo(\psi^2_{\lambda_1}(t) ).
\eeq

By \eqref{Fcond}, \eqref{u-1-step}, \eqref{buu-1-step}, and properties \eqref{psi-prime}, \eqref{ps2}, there exist $T_0\ge T_*$, $\eta_1>0$, and $D_0>0$ such that for $t\ge 0$, 
\begin{align*}
\psi_{\lambda_1}^{-1}(T_0+t) |\tilde F_1(T_0+t)|_{\alpha,\sigma}
&\le D_0 \psi_{\lambda_1}^{-1}(T_0+t) \psi_{\lambda_1}(T_0+t)\psi_{\delta_1}(T_0+t)\\
&\le D_0 \psi_{\delta_1}(T_0+t),
\end{align*}
\begin{align*}
\psi_{\lambda_1}^{-1}(T_0+t)|B(u(T_0+t),u(T_0+t))|_{\alpha,\sigma}
&\le D_0 \psi_{\lambda_1}^{-1}(T_0+t)\psi_{\lambda_1}^2(T_0+t)\\
&\le D_0\psi_{\lambda_1}(T_0+t),
\end{align*}
and
\begin{align*}
\psi_{\lambda_1}'(T_0+t) \psi_{\lambda_1}^{-2}(T_0+t)|u(T_0+t)|_{\alpha+1/2,\sigma} 
&\le D_0  \psi_{\lambda_1}(T_0+t)   \psi_{\eta_1}(T_0+t) \psi_{\lambda_1}^{-2}(T_0+t) \psi_{\lambda_1}(T_0+t)\\
 & \le  D_0\psi_{\eta_1}(T_0+t).
 \end{align*}
Let $\varep_1=\min\{\delta_1,\eta_1,\lambda_1\}$. Then
\beqs
|H_1(T_0+t)|_{\alpha,\sigma} \le 3D_0 \psi_{\varep_1}(T_0+t)\quad\forall t\ge0.
\eeqs

(c) We apply Theorem \ref{Fode}(iii) to equation \eqref{wj} in $G_{\alpha,\sigma}$ with $w(t):=w_1(T_0+t)$. $f(t):=H_1(T_0+t)$, $F(t):=\psi_{\varep_1}(T_0+t)$ and $\xi=\phi_1$. We obtain from \eqref{wremain2} that
\beqs
|w_1(T_0+t)-A^{-1}\phi_1 |_{\alpha+1-\rho,\sigma}= \mathcal O (\psi_{\varep_1}(T_0+t))
\eeqs
for any $\rho\in(0,1)$, which yields
\beqs
|w_1(t)-A^{-1}\phi_1 |_{\alpha+1-\rho,\sigma}=  \mathcal O (\psi_{\varep_1}(t)).
\eeqs
Multiplying this equation by $\psi_{\lambda_1} (t)$ gives
\beqs 
|u(t)-\xi_1\psi_{\lambda_1} (t)|_{\alpha+1-\rho,\sigma}= \mathcal O (\psi_{\lambda_1}(t)\psi_{\varep_1}(t))) \quad \forall \rho \in(0,1).
\eeqs

This proves that \eqref{remdelta} holds for $N=1$.

\medskip
\textit{Step 2: Induction step.} Let $N \ge 1$ be an integer  and assume there exists $\varep_N>0$ such that
\beq\label{vNrate}
|v_N(t)|_{\alpha+1-\rho,\sigma}=\mathcal O(\psi_{\lambda_N}(t)\psi_{\varep_N}(t))\quad \forall \rho \in (0,1). 
\eeq

\medskip
(a) We will find an equation for $v_N$ which is suitable to studying its asymptotic behavior. 
First, we  have the preliminary calculations.

\medskip
\underline{\emph{Rewriting $u'$.}} By the NSE,
\begin{align*}
u'&=-Au-B(u,u)+f(t)\\
&=-Av_N - A\bar u_N -B(\bar u_N+v_N,\bar u_N+v_N)+\bar F_N+F_{N+1}  +\tilde F_{N+1}\\
&=-Av_N- A\bar u_N+\bar F_N-B(\bar u_N,\bar u_N)+ \phi_{N+1}\psi_{\lambda_{N+1}}(t) +h_{N+1,1},
\end{align*}
 where
\beqs
h_{N+1,1}=-B(\bar u_N,v_N)-B(v_N,\bar u_N)-B(v_N,v_N)+\tilde F_{N+1}.
\eeqs

On the one hand,
\beqs 
-A\bar u_N+\bar F_N= - \sum_{n=1}^N \psi_{\lambda_n}(t)\Big( A\xi_n-\phi_n\Big).
\eeqs

On the other hand,
\begin{align*}
B(\bar u_N,\bar u_N)
&=\sum_{ m,j=1}^N \psi_{\lambda_m}(t)\psi_{\lambda_j}(t)B(\xi_m,\xi_j)\\
&=\sum_{n=1}^{N}\psi_{\lambda_n}(t)\Big( \sum_{\stackrel{1\le m,j\le N,}{\lambda_m\wedge\lambda_j=\lambda_n}}d_{\lambda_m,\lambda_j}B(\xi_m,\xi_j)\Big)\\
&\quad +\psi_{\lambda_{N+1}}(t) \sum_{\stackrel{1\le m,j\le N,}{\lambda_m\wedge\lambda_j=\lambda_{N+1}}}d_{\lambda_m,\lambda_j}B(\xi_m,\xi_j)
+h_{N+1,2},
\end{align*}
where
\beqs
h_{N+1,2}=\sum_{\stackrel{1\le m,j\le N,}{\lambda_m\wedge\lambda_j\ge \lambda_{N+2}}}  \psi_{\lambda_m}(t)\psi_{\lambda_j}(t) B(\xi_m,\xi_j).
\eeqs

Then we obtain the equation
\beq\label{up}
\begin{aligned}
 u'&=-Av_N - \sum_{n=1}^N \psi_{\lambda_n}(t)\Big( A\xi_n-\phi_n+ \sum_{\stackrel{1\le m,j\le N,}{\lambda_m\wedge\lambda_j=\lambda_n}}d_{\lambda_m,\lambda_j}B(\xi_m,\xi_j)\Big)\\
&\quad -\psi_{\lambda_{N+1}}(t) \Big( \sum_{\stackrel{1\le m,j\le N,}{\lambda_m\wedge\lambda_j=\lambda_{N+1}}}d_{\lambda_m,\lambda_j}B(\xi_m,\xi_j)- \phi_{N+1}\Big) -h_{N+1,2} +h_{N+1,1},
\end{aligned}
\eeq

In calculations below to the end of this proof, $\varep$ denotes a \emph{generic} positive index used for function  $\psi_\varep(t)$.

Utilizing \eqref{Fcond}, \eqref{ubarate} and \eqref{vNrate}, we estimate, with the use of the short-hand notation $\psi_\lambda=\psi_\lambda(t)$,
\begin{align*}
 |h_{N+1,1}(t)|_{\alpha,\sigma}
 &=\mathcal O(\psi_{\lambda_1})\mathcal O(\psi_{\lambda_N}\psi_{\varep_N})
+ \mathcal O(\psi_{\lambda_N}\psi_{\varep_N}) \mathcal O(\psi_{\lambda_1})\\
&\quad + \mathcal O(\psi_{\lambda_N}\psi_{\varep_N}) \mathcal O(\psi_{\lambda_N}\psi_{\varep_N}) 
+ \mathcal O(\psi_{\lambda_{N+1}}\psi_{\delta_{N+1}})\\
&= \mathcal O(\psi_{\lambda_1}\psi_{\lambda_N}\psi_{\varep_N})  
+ \mathcal O(\psi_{\lambda_{N+1}}\psi_{\delta_{N+1}})\\
&= \mathcal O(\psi_{\lambda_1\wedge\lambda_N}\psi_{\varep_N})  
+ \mathcal O(\psi_{\lambda_{N+1}}\psi_{\delta_{N+1}}).
\end{align*}
Since $\lambda_1\wedge\lambda_N\ge \lambda_{N+1}$, we have
\beqs
|h_{N+1,1}(t)|_{\alpha,\sigma}
= \mathcal O(\psi_{\lambda_{N+1}}(t)\psi_{\varep}(t)).
\eeqs

It is also clear that 
\begin{align*}
|h_{N+1,2}(t)|_{\alpha,\sigma}
&\le\sum_{\stackrel{1\le m,j\le N,}{\lambda_m\wedge\lambda_j\ge \lambda_{N+2}}}  |d_{\lambda_m,\lambda_j}|\psi_{\lambda_m\wedge\lambda_j}(t) |B(\xi_m,\xi_j)|_{\alpha,\sigma}\\
&= \mathcal O(\psi_{\lambda_{N+2}}(t))= \mathcal O(\psi_{\lambda_{N+1}}(t)\psi_{\varepsilon}(t)).
\end{align*}

\underline{\emph{Rewriting $\bar u_N'$.}} We have
\begin{align*}
 \bar u_N'&=\sum_{p=1}^N \psi_{\lambda_p}'\xi_p = \sum_{p=1}^{N}\xi_p\Big( \sum_{k=1}^{\widetilde N_p} c_{\lambda_p,k}\psi_{\lambda_p^\vee(k)} + \psi_{\lambda_p}'-\sum_{k=1}^{\widetilde N_p} c_{\lambda_p,k}\psi_{\lambda_p^\vee(k)}\Big),
\end{align*}
where $\widetilde N_p$ is the largest integer $k$ such that
\beqs
\lambda_p^\vee(k)\le \lambda_{N+1}.
\eeqs
Then we obtain 
\beq\label{uNp}
\bar u_N'=\sum_{n=1}^{N} \psi_{\lambda_n}\chi_n+\psi_{\lambda_{N+1}}\chi_{N+1}
+h_{N+1,3},
\eeq
where
\beqs
h_{N+1,3} =\sum_{p=1}^{N}\xi_p\Big( \psi_{\lambda_p}'-\sum_{k=1}^{\widetilde N_p} c_{\lambda_p,k}\psi_{\lambda_p^\vee(k)}\Big).
\eeqs

Note from \eqref{ppex} and the definition of $ \widetilde N_p$ that 
\beqs 
|\psi_{\lambda_p}'(t)-\sum_{k=1}^{\widetilde N_p} c_{\lambda_p,k}\psi_{\lambda_p^\vee(k)}(t)|=\mathcal O(\psi_\lambda(t)),\quad \text{some } \lambda>\lambda_{N+1}.
\eeqs 
Together with \eqref{quot0}, we have
\beqs
|h_{N+1,3}(t)|_{\alpha,\sigma}
=\mathcal O(\psi_{\lambda_{N+1}}(t)\psi_{\varepsilon}(t)).
\eeqs

\medskip
\underline{\emph{Equation for $v_N$.}}  Combining \eqref{up} and \eqref{uNp} yields 
\begin{align*}
v_N'&=u'-\bar u_N'\\
&=-Av_N- \sum_{n=1}^N \psi_{\lambda_n}(t)\Big( A\xi_n+\sum_{\stackrel{1\le m,j\le N,}{\lambda_m\wedge\lambda_j=\lambda_n}}d_{\lambda_m,\lambda_j}B(\xi_m,\xi_j)-\phi_n  + \chi_n\Big) \\
&\quad +\psi_{\lambda_{N+1}}(t)\Big(- \sum_{\stackrel{1\le m,j\le N,}{\lambda_m\wedge\lambda_j=\lambda_{N+1}}}d_{\lambda_m,\lambda_j}B(\xi_m,\xi_j)+\phi_{N+1}-\chi_{N+1}\Big)
+h_{N+1,4}(t),
\end{align*}
where
\beqs
h_{N+1,4}=h_{N+1,1}-h_{N+1,2}-h_{N+1,3}. 
\eeqs

 Note, for $1\le n\le N+1$, that
 \beqs
 \sum_{\stackrel{1\le m,j\le N,}{\lambda_m\wedge\lambda_j=\lambda_n}}B(\xi_m,\xi_j)=\sum_{\stackrel{1\le  m,j \le n-1,}{\lambda_m\wedge\lambda_j=\lambda_n}}B(\xi_m,\xi_j).
 \eeqs 

Therefore,   one has, for $1\le n\le N$,
\beqs
 A\xi_n+\sum_{\stackrel{1\le m,j \le N,}{\lambda_m\wedge\lambda_j=\lambda_n}}d_{\lambda_m,\lambda_j}B(\xi_m,\xi_j)-\phi_n+\chi_n =0,
\eeqs
and 
\beqs 
-\sum_{\stackrel{1\le m,j \le N,}{\lambda_m\wedge\lambda_j=\lambda_{N+1}}}d_{\lambda_m,\lambda_j}B(\xi_m,\xi_j)+\phi_{N+1}-\chi_{N+1}=A\xi_{N+1}.
\eeqs

These yield
\beq\label{vNp}
v_N'=-Av_N + \psi_{\lambda_{N+1}}(t)A\xi_{N+1}+h_{N+1,4}(t).
\eeq

(b) Estimation of $v_N(t)$. In equation \eqref{vNp}, we have
\beq \label{h4}
|h_{N+1,4}(t)|_{\alpha,\sigma}\le |h_{N+1,1}|_{\alpha,\sigma}+|h_{N+1,2}|_{\alpha,\sigma}+|h_{N+1,3}|_{\alpha,\sigma}=\mathcal O(\psi_{\lambda_{N+1}}(t)\psi_{\varep}(t)). 
\eeq

One has from \eqref{h4} that
\beqs
|\psi_{\lambda_{N+1}}(t)A\xi_{N+1}+h_{N+1,4}(t)|_{\alpha,\sigma}
\le \psi_{\lambda_{N+1}}(t)|A\xi_{N+1}|_{\alpha,\sigma}+|h_{N+1,4}(t)|_{\alpha,\sigma}
=\mathcal O(\psi_{\lambda_{N+1}}(t)).
\eeqs

Similar to part (c) of Step 1, we apply Theorem \ref{Fode}(iii) to the linearized NSE \eqref{vNp} with $w(t)=v_N(T_0+t)$, $\xi=0$, $f(t)=\psi_{\lambda_{N+1}}(T_0+t)A\xi_{N+1}+h_{N+1,4}(T_0+t)$, and $F(t)=\psi_{\lambda_{N+1}}(T_0+t)$, where $T_0\ge T_*$ is an appropriate, sufficient large time.
We have from \eqref{wremain2}, for any $\rho\in(0,1)$, that
\beq\label{newvN}
|v_N(t)|_{\alpha+1-\rho,\sigma}=\mathcal O(\psi_{\lambda_{N+1}}(t)).
\eeq

(c) We will improve the precision of decay in \eqref{newvN}. Define $w_{N+1}(t)=\psi_{\lambda_{N+1}}(t)^{-1} v_N(t)$  for $t\ge T_*$. 
We have
\beqs 
 w_{N+1}'=\psi_{\lambda_{N+1}}^{-1}(t)v_N' +\psi_{\lambda_{N+1}}'(t)\psi_{\lambda_{N+1}}^{-2}(t) v_N,
\eeqs  
which, thanks to \eqref{vNp}, yields
\beq \label{wNeq}
w_{N+1}'=-Aw_{N+1}+A\xi_{N+1}+H_{N+1}(t),
\eeq
where
$H_{N+1}(t)
= \psi_{\lambda_{N+1}}^{-1}(t)h_{N+1,4}(t)+ \psi_{\lambda_{N+1}}'(t)\psi_{\lambda_{N+1}}^{-2}(t) v_N(t).$

\medskip
We estimate $H_{N+1}(t)$ next.
By \eqref{h4},
\beqs
|\psi_{\lambda_{N+1}}^{-1}(t)h_{N+1,4}(t)|_{\alpha,\sigma}=\psi_{\lambda_{N+1}}^{-1}(t)\mathcal O(\psi_{\lambda_{N+1}}(t)\psi_{\varep}(t))=\mathcal O(\psi_{\varep}(t)). 
\eeqs
For the second term, we use  \eqref{psi-prime} and \eqref{newvN} to obtain
\beqs
|\psi_{\lambda_{N+1}}'(t)\psi_{\lambda_{N+1}}^{-2}(t) v_N(t)|_{\alpha,\sigma}
=\psi_{\lambda_{N+1}}^{-2}(t)\mathcal O (\psi_{ \lambda_{N+1}}(t)\psi_\varep(t))
\mathcal O(\psi_{\lambda_{N+1}}(t))
=\mathcal O(\psi_\varep(t)).
\eeqs
Hence, there exists $\varep_{N+1}>0$ such that
\beqs 
|H_{N+1}(t)|_{\alpha ,\sigma}
=\mathcal O(\psi_{\varep_{N+1}}(t)).
\eeqs 

(d) Note  from \eqref{ximore} that $A\xi_{N+1}\in G_{\alpha,\sigma} \subset G_{\alpha-\frac 1 2,\sigma} $. 
Again, by applying Theorem \ref{Fode}(iii) to equation \eqref{wNeq} with $w(t):=w_{N+1}(T_1+t)$, $\xi:=A\xi_{N+1}$, $f(t):=H_{N+1}(t+T_1)$, $F(t):=\psi_{\varep_{N+1}}(t+T_1)$ for some  $T_1\ge T_*$ sufficiently large, we obtain from \eqref{wremain2}, for any $\rho\in(0,1)$, that
\beqs
|w_{N+1}(T_1+t)-A^{-1}(A\xi_{N+1})|_{\alpha+1-\rho,\sigma} \le  C \psi_{\varep_{N+1}}(t+T_1)\quad\forall t\ge 1.
\eeqs
Thus, 
$|w_{N+1}(t)-\xi_{N+1}|_{\alpha+1-\rho,\sigma}=   \mathcal O(\psi_{\varep_{N+1}}(t)).$
Multiplying this equation by $\psi_{\lambda_{N+1}}(t)$ yields
\beqs
|v_N(t)- \xi_{N+1}\psi_{\lambda_{N+1}}(t)|_{\alpha+1 -\rho,\sigma}=\mathcal O(\psi_{\lambda_{N+1}}(t)\psi_{\varepsilon_{N+1}}(t)).
\eeqs

Since the left-hand side of this equation is $|v_{N+1}(t)|_{\alpha+1 -\rho,\sigma}$, 
it proves that the statement \eqref{remdelta} holds true for $N:=N+1$.

\medskip
\textit{Conclusion.} By the induction principle, we have \eqref{remdelta} holds true for all $N\in\N$.
Our proof is complete.
\end{proof}

 In Theorem \ref{mainthm}, both force $f$ and solution $u$ have  infinite sum expansions which means 
that they can be approximated by infinitely many terms $\psi_\lambda$'s as $\lambda\to\infty$. 
The case of  finite sum approximations can be treated similarly. 
We briefly discuss the idea and result here.

\begin{assumption}\label{D1}
Suppose there exist numbers $\sigma\ge 0$, $\alpha\ge 1/2$,  an integer $N_0\ge1$, strictly increasing, positive  numbers $\gamma_n$ and functions  $\tilde\phi_n\in G_{\alpha,\sigma}$ for $1\le n\le N_0$  such that 
\beq \label{fin2} 
\Big|f(t) - \sum_{n=1}^{N_0} \tilde \phi_n \psi_{\gamma_n}(t)\Big|_{\alpha,\sigma} =\mathcal O(\psi_\lambda(t))\text{ for some } \lambda>\gamma_{N_0}.
\eeq

Assume further that there exists a set $S_\infty$ that contains $\{\gamma_n:1\le n\le N_0\}$ and preserves the operations $\vee$ and $\wedge$, so that the set $S_*\eqdef S_\infty \cap [\gamma_1,\gamma_{N_0}]$ is finite.
\end{assumption}

In applications, we often choose $S_\infty=\mathcal G_{\Psi}(\{\gamma_n:1\le n\le N_0\})$, 
but it can be more general than this.

We rewrite $ S_*=\{\lambda_n: 1 \le n\le N_*\}$ for some integer $N_*\ge N_0$, where $\lambda_n$'s are strictly increasing. Note that $\lambda_{N_*}=\gamma_{N_0}$.
Then from \eqref{fin2} we have
\beq \label{f4}
\Big|f(t) - \sum_{n=1}^{N_*} \phi_n \psi_{\lambda_n}(t)\Big|_{\alpha,\sigma} =\mathcal O(\psi_\lambda(t))\text{ for some } \lambda>\lambda_{N_*}.
\eeq
where $\phi_n\in G_{\alpha,\sigma}$ for all $1\le n\le N_*$.

\begin{theorem} \label{finthm1}
Let Assumption \ref{D1} hold true, and let $f$ have the asymptotic approximation \eqref{f4}.
Let $\xi_n$ be defined by \eqref{xi1} and \eqref{xin1} for $1\le n\le N_*$.
For any Leray-Hopf weak solution $u(t)$ of \eqref{fctnse}, it holds that
$$\Big|u(t) - \sum_{n=1}^{N_*} \xi_n \psi_{\lambda_n}(t)\Big|_{\alpha,\sigma} =\mathcal O(\psi_\lambda(t))\text{ for some } \lambda>\lambda_{N_*}.
$$
\end{theorem}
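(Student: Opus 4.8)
The plan is to mirror the induction in the proof of Theorem~\ref{mainthm}, but to run it only finitely many times and to absorb everything of order strictly below $\psi_{\lambda_{N_*}}$ into the $\mathcal O(\psi_\lambda)$ remainder. Write $\bar u_N(t)=\sum_{j=1}^N\xi_j\psi_{\lambda_j}(t)$ and $v_N(t)=u(t)-\bar u_N(t)$ for $1\le N\le N_*$, as in part~A of that proof. As a preliminary step, \eqref{f4} and \eqref{quot0} give $|f(t)|_{\alpha,\sigma}=\mathcal O(\psi_{\lambda_1}(t))$, so Theorem~\ref{Fthm2}, applied with $F(t)=\psi_{\lambda_1}(t+T_*)$ (which satisfies \eqref{eF} and \eqref{Fa} by \eqref{mxlam} and \eqref{ps7}), produces $\hat T>0$ on which $u$ is a regular solution with $|u(\hat T+t)|_{\alpha+1/2,\sigma}\le C\psi_{\lambda_1}(t+T_*)$, whence $|B(u,u)|_{\alpha,\sigma}=\mathcal O(\psi_{\lambda_1}^2)$ by \eqref{AalphaB}. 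After replacing $u$ by $u(\hat T+\cdot)$ we may work on $[0,\infty)$.

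Next I would prove by induction on $N=1,\dots,N_*$: there is $\varepsilon_N>0$ with $|v_N(t)|_{\alpha+1-\rho,\sigma}=\mathcal O(\psi_{\lambda_N}(t)\psi_{\varepsilon_N}(t))$ for every $\rho\in(0,1)$. The base case $N=1$ is identical to Step~1 of the proof of Theorem~\ref{mainthm}, since it only uses $|f(t)-\phi_1\psi_{\lambda_1}(t)|_{\alpha,\sigma}=\mathcal O(\psi_{\lambda_1}(t)\psi_\varepsilon(t))$, which follows from \eqref{f4} and \eqref{quot0}. For the step $N\to N+1$ with $N+1\le N_*$, I would reproduce the derivation of the equation $v_N'=-Av_N+\psi_{\lambda_{N+1}}(t)A\xi_{N+1}+h_{N+1,4}(t)$ together with the splitting $h_{N+1,4}=h_{N+1,1}-h_{N+1,2}-h_{N+1,3}$. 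The only point needing care is the error bookkeeping in the finite setting: by \eqref{triple} every product $\psi_{\lambda_m}\psi_{\lambda_j}$ with $1\le m,j\le N$ equals a nonzero multiple of $\psi_{\lambda_m\wedge\lambda_j}$ with $\lambda_m\wedge\lambda_j\in S_\infty$, and since $S_*=S_\infty\cap[\gamma_1,\gamma_{N_0}]=\{\lambda_1,\dots,\lambda_{N_*}\}$, either $\lambda_m\wedge\lambda_j\in\{\lambda_1,\dots,\lambda_{N_*}\}$ — in which case it is handled by the recursion \eqref{xi1}--\eqref{xin1}, which makes the coefficients of $\psi_{\lambda_1},\dots,\psi_{\lambda_{N+1}}$ in $v_N'$ vanish — or $\lambda_m\wedge\lambda_j>\lambda_{N_*}\ge\lambda_{N+1}$; the same dichotomy applies to the indices $\lambda_p^\vee(k)$ arising from $\bar u_N'$ via \eqref{ppex} (using that $S_\infty$ preserves $\vee$), and the tail satisfies $|\tilde F_{N+1}(t)|_{\alpha,\sigma}=\mathcal O(\psi_{\lambda_{N+2}}(t))$ when $N+1<N_*$ and $|\tilde F_{N_*}(t)|_{\alpha,\sigma}=\mathcal O(\psi_\lambda(t))$ with $\lambda>\lambda_{N_*}$ when $N+1=N_*$, by \eqref{f4}. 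Combined with \eqref{psi-prime}, the bound $|\bar u_N(t)|_{\alpha+1,\sigma}=\mathcal O(\psi_{\lambda_1}(t))$ and the induction hypothesis, this gives $|h_{N+1,4}(t)|_{\alpha,\sigma}=\mathcal O(\psi_{\lambda_{N+1}}(t)\psi_\varepsilon(t))$ for some $\varepsilon>0$. Then Theorem~\ref{Fode}(iii), applied to $v_N'=-Av_N+\bigl(\psi_{\lambda_{N+1}}A\xi_{N+1}+h_{N+1,4}\bigr)$ with $F(t)=\psi_{\lambda_{N+1}}(T_0+t)$ for a large $T_0$, gives $|v_N(t)|_{\alpha+1-\rho,\sigma}=\mathcal O(\psi_{\lambda_{N+1}}(t))$, and the change of unknown $w_{N+1}=\psi_{\lambda_{N+1}}^{-1}v_N$ followed by a second application of Theorem~\ref{Fode}(iii) to $w_{N+1}'=-Aw_{N+1}+A\xi_{N+1}+H_{N+1}$ upgrades this to $|v_{N+1}(t)|_{\alpha+1-\rho,\sigma}=\mathcal O(\psi_{\lambda_{N+1}}(t)\psi_{\varepsilon_{N+1}}(t))$, exactly as in parts (b)--(d) of Step~2 there. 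This closes the induction.

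Taking $N=N_*$ gives $|v_{N_*}(t)|_{\alpha+1-\rho,\sigma}=\mathcal O(\psi_{\lambda_{N_*}}(t)\psi_{\varepsilon_{N_*}}(t))$, and by \eqref{triple} (equivalently \eqref{tq}) the right-hand side is $\mathcal O(\psi_\lambda(t))$ with $\lambda=\lambda_{N_*}\wedge\varepsilon_{N_*}>\lambda_{N_*}$. Since the Gevrey norms are increasing in the first index, $|v_{N_*}(t)|_{\alpha,\sigma}\le|v_{N_*}(t)|_{\alpha+1-\rho,\sigma}$ for any fixed $\rho\in(0,1)$, which is the claimed estimate (after undoing the time shift by $\hat T$, using \eqref{ps2}). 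I expect the only real labor, as in Theorem~\ref{mainthm}, to be the routine verification of the estimates for $h_{N+1,1},h_{N+1,2},h_{N+1,3}$; the genuinely new ingredient, and the place where Assumption~\ref{D1} is used, is the observation that the finiteness of $S_*$ forces every index produced by $\wedge$ or $\vee$ that is not one of $\lambda_1,\dots,\lambda_{N_*}$ to exceed $\lambda_{N_*}$, so that the truncation at $N_*$ is self-consistent and no additional terms $\psi_{\lambda_n}$ with $\lambda_n>\lambda_{N_*}$ are required in the approximation.
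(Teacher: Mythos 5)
Your proposal is correct and matches the paper's proof, which consists precisely of the remark that one reruns the induction of Theorem \ref{mainthm} finitely many times, up to $N=N_*$. The additional bookkeeping you supply --- that the finiteness of $S_*=S_\infty\cap[\gamma_1,\gamma_{N_0}]$ forces every index produced by $\wedge$ or $\vee$ outside $\{\lambda_1,\dots,\lambda_{N_*}\}$ to exceed $\lambda_{N_*}$, so the truncation is self-consistent --- is exactly the point the paper leaves implicit.
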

\begin{proof}
The proof of Theorem \ref{finthm1} is the same as that of Theorem \ref{mainthm} except that we only use \emph{finite} induction to establish \eqref{remdelta} for $1\le N\le N_*$.
\end{proof}

\section{Asymptotic expansions in a discrete system with a continuum background}\label{discrtsec}

In this section, we investigate the case that the system of functions $(\psi_n)_{n=1}^\infty$ in Definition \ref{sys0} cannot be  mapped directly to a system $(\psi_{\lambda_n})_{n=1}^\infty$ to be embedded into a continuum system $(\psi_\lambda)_{\lambda>0}$. Hence, Definition \ref{premsys} will not apply. However, we consider below the case when 
each $\psi_n$ is of the same decaying order, when $t\to\infty$, as $\varphi_{\lambda_n}$ with the functions $\varphi_{\lambda_n}$'s being part of a continuum system $(\varphi_\lambda)_{\lambda>0}$.

\begin{definition}\label{dpremsys}
Let $\Psi=(\psi_n)_{n=1}^\infty$ be a sequence of positive functions defined on $[T_*,\infty)$ for some $T_*\in\R$,
and $\Phi=(\varphi_{\lambda})_{\lambda>0}$ be a continuum system as in Definition \ref{premsys} such that 
there exists a strictly increasing, divergent sequence $(\lambda_n)_{n=1}^\infty$ of positive numbers such that
\beq\label{psph}
\psi_n(t)\Oeq \varphi_{\lambda_n}(t)\quad\text{for all }n\in\N.
\eeq

Let $(X, \|\cdot\|)$ be a normed space, and $g$ be a function from $(0,\infty)$ to $X$.
We define the asymptotic expansions
\beq\label{Phiex} 
g(t) \dsim{\Phi} \sum_{n=1}^\infty \xi_n \psi_n(t),\quad 
g(t) \dsim{\Phi} \sum_{n=1}^N \xi_n \psi_n(t) \text{ with }N\in\N,\quad 
g(t) \dsim{\Phi} 0
\text{ in } X,
\eeq
in the same way as Definition \ref{premsys} and the special cases {\rm(i)--(iv)} below it,
where we replace $\psi_{\lambda_n}$ with $\psi_n$, 
replace \eqref{lamrate} with 
\beq \label{Prate}
\Big\|g(t)- \sum_{n=1}^N \xi_n \psi_n(t)\Big\|=\mathcal O(\psi_N(t)\varphi_\varep(t)),
\eeq
replace \eqref{finiterate} with 
\beq \label{Pfin}
\Big\|g(t)- \sum_{n=1}^N \xi_n \psi_n(t)\Big\|=\mathcal O(\varphi_\lambda(t)),
\eeq
replace  \eqref{zeroexp} with 
\beq \label{Pzero}
\|g(t)\|=\mathcal O(\varphi_\lambda(t)).
\eeq
\end{definition}

We refer to $\Phi$ as a \emph{background system} of $(\psi_n)_{n=1}^\infty$.
We will write  expansions in \eqref{Phiex} as
\beqs
g(t) \dsim{\Phi} \sum_{n=1}^N \xi_n \psi_n(t) \text{ in } X,\text{ for $N=\infty$, $N\in\N$, and $N=0$, respectively.}
\eeqs

The following remarks on Definition \ref{dpremsys} are in order.

\begin{enumerate}[label={\rm(\alph*)}]
 \item It follows \eqref{psph} immediately that property \eqref{limzero} holds true.
Moreover, for any $n>m$, there exists $\eta>0$ such that
 \beq \label{dquot}
 \psi_n(t)=\mathcal O(\psi_m(t) \varphi_\eta(t)).
 \eeq
 where, $\eta$ is a number such that
$\phi_{\lambda_n}(t)=\mathcal O(\phi_{\lambda_m}(t)\psi_\eta(t))$, thanks to property \eqref{quot0} for the system $\Phi$.
Thus, property \eqref{noquot} is also true.
Therefore, Definition \ref{sys0} for the asymptotic expansions $g(t)\sim \sum_{n=1}^\infty \xi_n \psi_n(t)$  in $X$ still applies.

Obviously, if $g(t) \dsim{\Phi} \sum_{n=1}^N \xi_n \psi_n(t)$ then $g(t)\sim \sum_{n=1}^N \xi_n \psi_n(t)$ in the sense of Definition \ref{sys0}.
Then, thanks to Proposition \ref{Prop62}, the uniqueness of the latter expansion implies the uniqueness of the former one.

\item We can \emph{equivalently} replace $\mathcal O(\psi_N(t)\varphi_\varep(t))$ in \eqref{Prate} with
$\mathcal O(\varphi_{\lambda_N}(t)\varphi_\varep(t))$, or  $\mathcal O(\varphi_\lambda(t))$ for some $\lambda>\lambda_N$.

\item For a given sequence  $(\psi_n)_{n=1}^\infty$, there may be different background systems. However, the asymptotic expansion of a function $g$ as defined in \eqref{Phiex}, thanks to remark (a), is unique disregarding the choice of the background system $\Phi$.

\item Let $\Phi=(\varphi_\lambda)_{\lambda>0}$ and $\Theta=(\vartheta_\lambda)_{\lambda>0}$ be two systems  as in Definition \ref{premsys}. If there exists a strictly increasing bijection $\mu$ from $(0,\infty)$ to $(0,\infty)$
such that $\varphi_\lambda(t)\Oeq \vartheta_{\mu(\lambda)}(t)$ for all $\lambda>0$, then 
$$ g(t) \dsim{\Phi} \sum_{n=1}^N \xi_n \psi_n(t) \quad \text{ if and only if }\quad g(t) \dsim{\Theta} \sum_{n=1}^N \xi_n \psi_n(t) .  $$

\item 
Let $\Psi=(\psi_\lambda)_{\lambda>0}$ and $\Phi=(\varphi_\lambda)_{\lambda>0}$ satisfy (a) and (b) of Definition \ref{premsys}.
Suppose there exists a strictly increasing bijection $\mu$ from $(0,\infty)$ to $(0,\infty)$ such that 
\beq\label{ppmu}
\psi_\lambda(t)\Oeq \varphi_{\mu(\lambda)}(t)\text{ for all }\lambda>0.
\eeq
Let $X$, $(\xi_n)_{n=1}^\infty$ and $(\lambda_n)_{n=1}^\infty$ be as in Definition \ref{premsys}. Set $\tilde \psi_n=\psi_{\lambda_n}$ for all $n\in\N$. 
If $g$ is a function from $(0,\infty)$ to $X$, then
\beq\label{equiex1}
g(t)\simP \sum_{n=1}^\infty \xi_n\psi_{\lambda_n}(t)\text{ if and only if }
g(t)\dsim{\Phi} \sum_{n=1}^\infty \xi_n\tilde \psi_n(t).
\eeq
For simplicity, we will write the last expansion as
\beq\label{equiex2}
g(t)\dsim{\Phi} \sum_{n=1}^\infty \xi_n\psi_{\lambda_n}(t).
\eeq
\end{enumerate}

While the functions $\psi_n$'s, with discrete index $n$, are the actual functions presented in the expansions in \eqref{Phiex}, the functions $\varphi_\lambda$, with the continuum index $\lambda$, provides specific rates in comparison \eqref{dquot} and remainder estimates \eqref{Pzero}, \eqref{Pfin}, \eqref{Prate}. The fact that $\lambda$ has the range $(0,\infty)$ gives $\varphi_\lambda$ the flexibility in many comparisons and estimates, while the structure of the expansions is maintained by $\psi_n$'s. 
Note also that $\psi_n$'s are not required to be decreasing anymore.

\begin{assumption}\label{dsysdef} 
For the rest of this section, we assume that $\Psi=(\psi_n)_{n=1}^\infty$ and $\Phi=(\varphi_\lambda)_{\lambda>0}$  are a pair of systems as in Definition \ref{dpremsys} that further satisfy
\begin{enumerate}[label=\rm (\roman*)]
 \item For any $m,n\in\N$, there exist a natural number $k > \max\{m,n\}$ and a nonzero constant $d_{m,n}$ such that 
 \beq\label{dtriple} \psi_m\psi_n=d_{m,n}\psi_k.
 \eeq 

 \item For each $n\in\N$, $\psi_n$ is continuous and differentiable on $[T_*,\infty)$, and  $\psi_n'$ has an expansion in the sense of Definition \ref{dpremsys}
 \beqs
 \psi_n'(t)\dsim{\Phi} \sum_{k=n+1}^{N_n} c_{n,k}\psi_{k}(t) \text{ in }\R,
 \eeqs
  where $N_n\in\N\cup\{0,\infty\}$, all $c_{n,k}$ are constants.
  
  \item The system $\Phi=(\varphi_\lambda)_{\lambda>0}$ satisfies Condition \ref{weaksys}.
\end{enumerate}
\end{assumption}

\begin{notation}
 We denote the unique number $k$ in \eqref{dtriple} by $m\wedge n$.
\end{notation}

We obtain the asymptotic expansions of type \eqref{Phiex} for the NSE.

\begin{theorem}\label{discretethm}
Suppose there exist $\alpha\ge 1/2$, $\sigma\ge 0$, and $\phi_n\in G_{\alpha,\sigma}$ for all $n\in\N$ such that
\beqs
f(t)\dsim{\Phi} \sum_{n=1}^\infty \phi_n \psi_n(t)\text{ in  }G_{\alpha,\sigma}.
\eeqs

Then any Leray-Hopf weak solution $u(t)$  of \eqref{fctnse} 
has the asymptotic expansion
 \beq\label{uexp2}
u(t)\dsim{\Phi} \sum_{n=1}^\infty  \xi_n \psi_n(t)\quad \text{in }G_{\alpha+1-\rho,\sigma} \text{ for all } \rho \in (0,1),
 \eeq
 where 
\beq\label{dxin}
 \xi_1=A^{-1}\phi_1,\quad 
 \xi_n=A^{-1}\Big(\phi_n - \chi_n -  \begin{displaystyle}\sum_{\stackrel{1\le k,m\le n-1,}{k\wedge m=n}} \end{displaystyle}
 d_{k,m}B(\xi_k,\xi_m)\Big)
\text{ for }n \ge 2, 
\eeq
with $\chi_n=\sum_{p=1}^{n-1}c_{p,n} \xi_p$.
\end{theorem}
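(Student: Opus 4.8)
The plan is to follow the proof of Theorem~\ref{mainthm} almost line for line, replacing the continuum indices $\lambda_n$ by the discrete indices $n$ and routing every \emph{quantitative} decay statement through the background system $\Phi=(\varphi_\lambda)_{\lambda>0}$. I begin by collecting the tools. By Assumption~\ref{dsysdef}(iii) the background members $\varphi_\lambda$ satisfy Condition~\ref{weaksys}, hence are decreasing, dominate every exponential, and are shift-stable in the sense of \eqref{mxlam}, \eqref{ps7}, \eqref{ps5}, \eqref{ps2}; in particular each $\varphi_\lambda(\cdot+T_*)$ is a legitimate choice for the majorant $F$ in Theorems~\ref{Fode} and \ref{Fthm2}, satisfying \eqref{eF} and \eqref{Fa}. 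The comparison \eqref{psph} lets me pass freely between a bound phrased with $\psi_n$ and one phrased with $\varphi_{\lambda_n}$, while the discrete analogue of \eqref{psi-prime}, namely $|\psi_n'(t)|=\mathcal O(\psi_n(t)\varphi_\eta(t))$ for some $\eta>0$, follows from the derivative expansion in Assumption~\ref{dsysdef}(ii) together with \eqref{dquot}. Next, exactly as in part B of Theorem~\ref{mainthm}, an induction on $n$ using \eqref{dxin}, the boundedness \eqref{AalphaB} of $B$ on Gevrey spaces, and $A^{-1}\colon G_{\alpha,\sigma}\to G_{\alpha+1,\sigma}$ gives $\xi_n\in G_{\alpha+1,\sigma}$ for all $n$, whence $|\bar u_N(t)|_{\alpha+1,\sigma}=\mathcal O(\psi_1(t))$ for $\bar u_N(t)=\sum_{j=1}^N\xi_j\psi_j(t)$.

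The heart of the proof is the induction on $N$ showing that, for some $\varepsilon_N>0$, $|v_N(t)|_{\alpha+1-\rho,\sigma}=\mathcal O(\psi_N(t)\varphi_{\varepsilon_N}(t))$ for every $\rho\in(0,1)$, where $v_N=u-\bar u_N$; this is exactly the condition \eqref{Prate} defining $u(t)\dsim{\Phi}\sum\xi_n\psi_n(t)$. For $N=1$ I first invoke Theorem~\ref{Fthm2} with $F(t)=\varphi_{\lambda_1}(t+T_*)$ — the hypothesis $|f(t)|_{\alpha,\sigma}=\mathcal O(\psi_1(t))=\mathcal O(\varphi_{\lambda_1}(t+T_*))$ coming from the $N=1$ instance of the expansion of $f$ and from \eqref{psph}, \eqref{ps2} — to obtain a time $\hat T$ past which $u$ is regular with $|u(\hat T+t)|_{\alpha+1/2,\sigma}=\mathcal O(\psi_1(t))$, hence $|B(u,u)|_{\alpha,\sigma}=\mathcal O(\psi_1^2(t))$ by \eqref{AalphaB}. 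Writing $w_1=\psi_1^{-1}u$, the NSE becomes $w_1'+Aw_1=\phi_1+H_1$ with $H_1=\psi_1^{-1}[\tilde F_1-B(u,u)]-\psi_1'\psi_1^{-2}u$; combining the remainder estimate for $\tilde F_1$, the decay of $u$ and $B(u,u)$, and the bound on $\psi_1'$ yields $|H_1(t)|_{\alpha,\sigma}=\mathcal O(\varphi_{\varepsilon_1}(t))$ for a suitable $\varepsilon_1>0$, and Theorem~\ref{Fode}(iii) then gives $|w_1(t)-A^{-1}\phi_1|_{\alpha+1-\rho,\sigma}=\mathcal O(\varphi_{\varepsilon_1}(t))$; multiplying back by $\psi_1$ is the $N=1$ claim.

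For the induction step I reproduce part D of Theorem~\ref{mainthm}: substituting $u=\bar u_N+v_N$ and $f=\bar F_N+F_{N+1}+\tilde F_{N+1}$ into $u'=-Au-B(u,u)+f$, expanding $B(\bar u_N,\bar u_N)$ by grouping the products $\psi_m\psi_j=d_{m,j}\psi_{m\wedge j}$ according to whether $m\wedge j\le N$, equals $N+1$, or is $\ge N+2$ (here $\wedge$ is the \emph{discrete} operation of \eqref{dtriple}), and expanding $\bar u_N'=\sum_p\psi_p'\xi_p$ by truncating each derivative expansion at index $N+1$ — which, since the $\psi_k$'s appearing in $\psi_p'$ all have $k>p$, produces exactly $\chi_n=\sum_{p=1}^{n-1}c_{p,n}\xi_p$ as the coefficient of $\psi_n$. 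The recursion \eqref{dxin} is designed so that the coefficients of $\psi_1,\dots,\psi_N$ cancel, leaving $v_N'=-Av_N+\psi_{N+1}(t)A\xi_{N+1}+h_{N+1,4}(t)$ with $h_{N+1,4}=h_{N+1,1}-h_{N+1,2}-h_{N+1,3}$; using \eqref{dquot}, the induction hypothesis, $|\bar u_N|_{\alpha+1,\sigma}=\mathcal O(\psi_1)$, and the fact that $1\wedge N\ge N+1$, each $h_{N+1,i}$ is $\mathcal O(\psi_{N+1}(t)\varphi_\varepsilon(t))$ with $\varepsilon>0$. Theorem~\ref{Fode}(iii) with $F(t)=\varphi_{\lambda_{N+1}}(t+T_0)$ first yields the crude bound $|v_N(t)|_{\alpha+1-\rho,\sigma}=\mathcal O(\psi_{N+1}(t))$; then with $w_{N+1}=\psi_{N+1}^{-1}v_N$ one has $w_{N+1}'=-Aw_{N+1}+A\xi_{N+1}+H_{N+1}$, the crude bound and the estimate on $\psi_{N+1}'$ give $|H_{N+1}(t)|_{\alpha,\sigma}=\mathcal O(\varphi_{\varepsilon_{N+1}}(t))$, and a second application of Theorem~\ref{Fode}(iii) sharpens this to $|v_N(t)-\xi_{N+1}\psi_{N+1}(t)|_{\alpha+1-\rho,\sigma}=\mathcal O(\psi_{N+1}(t)\varphi_{\varepsilon_{N+1}}(t))$, which is the claim at level $N+1$. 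The induction closes.

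The main difficulty is the familiar feedback of the bilinear term: one cannot obtain the sharp rate for $v_N$ in a single step, so the two-stage argument — a crude $\mathcal O(\psi_{N+1})$ bound followed by refinement via the rescaled unknown $w_{N+1}$ — is unavoidable, and it is precisely why Theorem~\ref{Fode}(iii) requires the majorant to satisfy \eqref{eF} and \eqref{Fa}; these must be verified for the background functions $\varphi_\lambda$ rather than for the $\psi_n$'s themselves, since the latter need not be monotone (that is exactly the role of Assumption~\ref{dsysdef}(iii)). A secondary point to watch is that every product or differentiation of an error term must again decay at a rate $\varphi_\eta$ with $\eta>0$ \emph{strictly} positive; this is guaranteed by Assumption~\ref{dsysdef}(i),(ii) — the index set closing under $\wedge$ and under the derivative operation — together with property \eqref{quot0} of $\Phi$, so the generic positive index $\varepsilon$ produced at each stage is always legitimate.
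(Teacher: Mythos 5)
Your proposal is correct and follows essentially the same route as the paper: the paper's own proof of Theorem \ref{discretethm} consists precisely of the instruction to repeat the proof of Theorem \ref{mainthm} with $\psi_{\lambda_n}$ replaced by $\psi_n$ and with every auxiliary decay rate $\psi_{\delta_1},\psi_\varep,\psi_{\varep_N},\ldots$ replaced by the corresponding background function $\varphi_\sharp$, which is exactly the substitution scheme you carry out. Your additional remarks — that \eqref{eF} and \eqref{Fa} must be checked for the $\varphi_\lambda$'s (via Assumption \ref{dsysdef}(iii)) rather than for the possibly non-monotone $\psi_n$'s, and that the discrete $\wedge$ of \eqref{dtriple} plays the role of the continuum one — are the right points to verify and are consistent with the paper.
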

\begin{proof}

We follow the proof of   Theorem \ref{mainthm} and make the following replacements:  
\begin{itemize}
 \item  $\psi_{\lambda_n}$ is replaced with $\psi_n$ for all $n\in\N$, and
 \item $\psi_\sharp$ is replaced with $\varphi_\sharp$ whenever the subscript symbol $\sharp$ is $\delta_1$, $\varep$, $\varep_1$, $\eta_1$, $\varep_{N}$, $\varep_{N+1}$, $\delta_{N+1}$.
 \end{itemize}
 
 It results in the expansion \eqref{uexp2} as desired.
 \end{proof}

For finite sum asymptotic approximations in a discrete system, we obtain the following counter part of Theorem \ref{finthm1}.

\begin{theorem} \label{finthm2}
Suppose there exist numbers $\sigma\ge 0$, $\alpha\ge 1/2$, $N_*\in \N$, and functions  $\phi_n\in G_{\alpha,\sigma}$ for $1\le n\le N_*$  such that 
\beqs \label{fin3} 
\Big|f(t) - \sum_{n=1}^{N_*} \phi_n \psi_n(t)\Big|_{\alpha,\sigma} =\mathcal O(\varphi_\lambda(t))\text{ for some } \lambda>\lambda_{N_*}.
\eeqs 

Let $\xi_n$ be defined by \eqref{dxin} for $1\le n\le N_*$.
Then any Leray-Hopf weak solution $u(t)$ satisfies
$$\Big|u(t) - \sum_{n=1}^{N_*} \xi_n \psi_n(t)\Big|_{\alpha,\sigma} =\mathcal O(\varphi_\lambda(t))\text{ for some } \lambda>\lambda_{N_*}.
$$
\end{theorem}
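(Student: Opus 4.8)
The plan is to mimic, within the discrete-with-background framework of Section~\ref{discrtsec}, the way Theorem~\ref{finthm1} was obtained from Theorem~\ref{mainthm}: I would run the proof of Theorem~\ref{discretethm} verbatim --- that is, the proof of Theorem~\ref{mainthm} with $\psi_{\lambda_n}$ replaced by $\psi_n$ throughout and every auxiliary rate function $\psi_\sharp$ replaced by the background function $\varphi_\sharp$ --- but carry out the term-by-term induction only for $N=1,2,\dots,N_*$ rather than for all $N\in\N$. The outcome will be the family of remainder estimates
\beqs
\Big|u(t)-\sum_{n=1}^{N}\xi_n\psi_n(t)\Big|_{\alpha+1-\rho,\sigma}=\mathcal O(\varphi_{\mu_N}(t)),\qquad 1\le N\le N_*,\ \rho\in(0,1),
\eeqs
for suitable $\mu_N>\lambda_N$; the case $N=N_*$, together with the embedding $G_{\alpha+1-\rho,\sigma}\subset G_{\alpha,\sigma}$, yields the asserted estimate with $\lambda=\mu_{N_*}$.

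Two ingredients have to be put in place, both already available. First, the preparatory large-time decay: from the hypothesis and $\psi_1(t)\Oeq\varphi_{\lambda_1}(t)$ one gets $|f(t)|_{\alpha,\sigma}=\mathcal O(\varphi_{\lambda_1}(t+T_*))$ by \eqref{ps2}, and since $\Phi$ satisfies Condition~\ref{weaksys} the decreasing function $F(t)=\varphi_{\lambda_1}(t+T_*)$ obeys \eqref{eF} and \eqref{Fa} by \eqref{mxlam} and \eqref{ps7}; Theorem~\ref{Fthm2} then yields a time $\hat T$ beyond which $u$ is regular with $|u(\hat T+t)|_{\alpha+1/2,\sigma}\le C\varphi_{\lambda_1}(t+T_*)$, hence a bound of order $\varphi_{\lambda_1}^2$ for $|B(u,u)|_{\alpha,\sigma}$ through \eqref{AalphaB}. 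Second, the inductive step: one writes the NSE as a linearized equation for $v_N=u-\sum_{n\le N}\xi_n\psi_n$, collects the error terms $h_{N+1,j}$ using the triple-product relation \eqref{dtriple} and the truncated derivative expansion of Assumption~\ref{dsysdef}(ii), passes to $w_{N+1}=\psi_{N+1}^{-1}v_N$, and applies Theorem~\ref{Fode}(iii) with the decreasing majorant $\varphi_{\lambda_{N+1}}(\cdot+T_*)$ of $\psi_{N+1}$ (admissible since $\psi_{N+1}\Oeq\varphi_{\lambda_{N+1}}$, which is decreasing and satisfies \eqref{eF}, \eqref{Fa}). The recursion \eqref{dxin} is exactly what makes the coefficient of $\psi_n(t)$ vanish for $n\le N$ and identifies $A\xi_{N+1}$ with the coefficient of $\psi_{N+1}(t)$, so the equation for $v_N$ reduces to the form \eqref{vNp} and \eqref{wremain2} supplies the next-order improvement.

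The one point requiring genuine care --- and the main obstacle --- is that in the finite-sum setting we are handed only a single remainder estimate for the force, namely $|f(t)-\sum_{n\le N_*}\phi_n\psi_n(t)|_{\alpha,\sigma}=\mathcal O(\varphi_\lambda(t))$ with one fixed $\lambda>\lambda_{N_*}$, instead of the infinite hierarchy of estimates exploited in Theorem~\ref{mainthm}. One must therefore verify that in the final inductive step ($N=N_*-1\to N_*$) the contribution of $\tilde F_{N_*}=f-\sum_{n\le N_*}\phi_n\psi_n$ enters only at the level of this one estimate and is never differentiated or otherwise refined beyond it, and that all the derivative expansions of the $\psi_n'$ are truncated at indices corresponding to exponents $\le\lambda_{N_*}$ in accordance with the finite-sum convention of \eqref{finiteps}. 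This is precisely the bookkeeping that underlies Theorem~\ref{finthm1}; once it is in place the argument is routine, and I would cite the relevant steps of the proofs of Theorems~\ref{mainthm} and~\ref{discretethm} rather than repeat the computations.
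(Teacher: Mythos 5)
Your proposal is correct and follows essentially the same route as the paper: the authors prove Theorem \ref{finthm2} by running the proof of Theorem \ref{discretethm} (i.e., the proof of Theorem \ref{mainthm} with $\psi_{\lambda_n}\to\psi_n$ and the auxiliary rates $\psi_\sharp\to\varphi_\sharp$) with the induction truncated at $N=N_*$, exactly as you describe. Your added remarks on the single force remainder estimate and the truncation of the derivative expansions are the correct bookkeeping points, consistent with what underlies Theorem \ref{finthm1}.
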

\begin{proof}
The proof of Theorem \ref{finthm2} is the same as that of Theorem \ref{finthm1} with the use of replacements in the proof of Theorem \ref{discretethm}.
\end{proof}

\section{Applications}\label{examsec}

We will apply results in Sections \ref{expsec} and \ref{discrtsec} to obtain specific expansions for solutions of the NSE corresponding to different types of forces.
We focus on the infinite expansions, hence, show only applications of Theorems \ref{mainthm} and \ref{discretethm}. Their counterparts using the finite asymptotic approximations in Theorems \ref{finthm1} and \ref{finthm2} can be similarly obtained. However, they will not be presented here, for the sake of avoiding repetitions and keeping the paper concise.

\medskip
First, we discuss a very frequently used type of systems of functions for long-time asymptotic expansions.

\begin{definition}
 A \emph{P-system} is a system $\Psi=(\psi_\lambda)_{\lambda>0}$, with $\psi_\lambda=\varphi^\lambda$, where 
$\varphi$ is a positive function defined on $[T_*,\infty)$ for some $T_*\ge 0$, and $\varphi(t)\to 0$ as $t\to\infty$.
\end{definition}

\noindent\textbf{Property (P).} Clearly, a P-system $\Psi$ satisfies (a) and (b) in Definition \ref{premsys} with $\eta=\lambda-\mu$, and (i) in Condition \ref{sysdef} with $d_{\lambda,\mu}=1$ and $\gamma=\lambda\wedge \mu=\lambda+\mu$.

In this case, a set $S\subset (0,\infty)$ preserves the operation $\wedge$, see \eqref{S2}, if and only if it \emph{preserves the addition}, i.e.,  $x+y\in S$ whenever $x,y\in S$.

\medskip
In subsections \ref{ss1}--\ref{ss4} below, let $\sigma\ge 0$ and $\alpha\ge 1/2$ be given numbers, $(\gamma_n)_{n=1}^\infty$ be a strictly increasing, divergent sequence of positive numbers, and $(\tilde \phi_n)_{n=1}^\infty$ be  a sequence in $G_{\alpha,\sigma}$.

\subsection{The system of power-decaying functions}\label{ss1}
We quickly demonstrate how to apply Theorem \ref{mainthm} to recover one of the main theorems in \cite{CaH1} on the expansions in the system of power-decaying functions.

Let $\Psi=(t^{-\lambda})_{\lambda>0}$ which  is a P-system.
\begin{enumerate}[label={\rm(\roman*)}]
 \item By Property (P), $\Psi$ satisfies (a) and (b) of Definition \ref{premsys}.
 \item By Property (P), $\Psi$ satisfies (i) of Condition \ref{sysdef}.
 In addition, it satisfies (ii) of Condition \ref{sysdef} with  
\beq\label{Pvee}
N_\lambda=1, \quad c_{\lambda,1}=-\lambda,\quad \lambda^\vee(1)=\lambda+1\text{ for all }\lambda>0.
\eeq
Thus, $\Psi$ meets Condition \ref{sysdef}. 
 \item Elementary calculations show $\Psi$ meets Condition \ref{weaksys}.
\end{enumerate}

Therefore, $\Psi$ satisfies the conditions set from the beginning of Section \ref{expsec}.

\medskip
Note from \eqref{Pvee} that a set $S\subset (0,\infty)$ preserves the operation $\vee$, see \eqref{S1}, if and only if it \emph{preserves the  increments by $1$}, i.e., $x+1\in S$ whenever $x\in S$.

\medskip
We assume the force has an expansion
\beq\label{f-power}
f(t) \simP \sum_{n=1}^\infty \tilde \phi_n t^{-\gamma_n} \quad \text{ in }G_{\alpha,\sigma}.
\eeq

Let 
\beq\label{Sst1}
S_*=\Big \{  \sum_{j=1}^p \gamma_{n_j} +k : \ p,n_1,n_2,\ldots,n_p\in\N,\ k\in\N\cup\{0\}\Big\}. 
\eeq

Clearly, the set  $S_*$ in \eqref{Sst1}  satisfies Assumption \ref{C1}. We assume \eqref{museq}, and rewrite \eqref{f-power} as
\beqs \label{f-power2}
f(t)\simP \sum_{n=1}^{\infty} \phi_n  t^{-\lambda_n}\quad \text{in }G_{\alpha,\sigma},
\eeqs
where the sequence $(\phi_n)_{n=1}^\infty\subset G_{\alpha,\sigma}$ is defined as in \eqref{fexp2}.
Then Theorem \ref{mainthm} implies that any Leray-Hopf weak solution $u(t)$ of the NSE \eqref{fctnse} has the asymptotic expansion 
\beqs
u(t) \simP \sum_{n=1}^\infty  \xi_n t^{-\lambda_n}\quad \text{in }G_{\alpha+1-\rho,\sigma}\text{ for all } \rho \in (0,1),
 \eeqs
where  
\beqs
\xi_1=A^{-1}\phi_1,\quad \xi_n=A^{-1}\Big(\phi_n + \chi_n - \sum_{\stackrel{1\le k,m\le n-1,}{\lambda_k+\lambda_m=\lambda_n}}B(\xi_k,\xi_m)\Big) \quad\text{for } n\ge 2,
\eeqs
with $\chi_n=\lambda_p \xi_p$ if there exists an integer  $p\in [1, n-1]$ such that $\lambda_p +1= \lambda_n$,
and $\chi_n=0$ otherwise.


We have recovered Theorem 4.3 in \cite{CaH1} as a consequence of Theorem \ref{mainthm}.

\subsection{Systems of iterated logarithmic, decaying functions}\label{ss2}

 We consider the case when the force decays as logarithmic or iterated logarithmic functions. 
 
For $k,m \in \N$, let 
\beqs
L_k(t)= \underbrace{\ln(\ln(\cdots \ln(t)))}_{\text{$k$-times}}\quad \text{and}\quad 
\mathcal{L}_m(t)= (L_1(t),L_2(t), \cdots, L_m(t)).
\eeqs

Let $Q_0: \R^m \to R$ be a polynomial in $m$ variables:  
\beq\label{P0}
Q_0(z)=\sum_\alpha c_\alpha z^\alpha \text{ for }z\in\R^m,
\eeq
where the sum is taken over finitely many multi-index $\alpha=(\alpha_1,\alpha_2,\ldots,\alpha_m)$, and $c_\alpha$'s are (real) constants.
We use the lexicographic order for the multi-indices in \eqref{P0}. 

We assume that $Q_0(z)$ has positive degree and positive leading coefficient.
Denote by $\alpha_*=(\alpha_{*1},\alpha_{*2},\ldots,\alpha_{*m})$ the largest multi-index (with the lexicographic order) in \eqref{P0} for which $c_{\alpha_*}\ne 0$.
Then we have $|\alpha_*|\ge 1$ and $c_{\alpha_*}>0$.

Let $Q_1$ be a polynomial in one variable of positive degree with positive leading coefficient.
Denote the degree of $Q_1$ by $d\ge 1$, and the leading coefficient by $a_d>0$.

Given a number $\beta>0$, we define
\beq\label{philog}
\omega(t)= (Q_0 \circ \mathcal{L}_m \circ Q_1)(t^\beta))\text{ with } t\in \R.
\eeq

One can see that there exists $T_*>0$ such that $\omega$ is  
a positive function defined on $[T_*,\infty)$ and $\omega(t)\to \infty$ as $t\to\infty$.

Let $\psi_\lambda(t)= \omega(t)^{-\lambda}$ for $\lambda>0$, and let $\Psi$ be the P-system $(\psi_\lambda)_{\lambda>0}$.

\begin{lemma}\label{lemL}
If $\lambda>0$ then
\beq\label{psilog}
\lim_{t\to\infty} \psi_\lambda(t) \mathcal L_m( t)^{\lambda\alpha_*}=(c_{\alpha_*}(\beta d)^{\alpha_{*1}})^{-\lambda}.
\eeq
\end{lemma}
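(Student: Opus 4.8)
The plan is to unravel the definition of $\omega(t)$ layer by layer, tracking the dominant term of each composition as $t\to\infty$, and then raise the resulting asymptotic equivalence to the power $-\lambda$.

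First I would observe that $Q_1(t^\beta) = a_d t^{\beta d}(1 + o(1))$ as $t\to\infty$, so $\ln Q_1(t^\beta) = \beta d \ln t + \ln a_d + o(1) = \beta d \ln t (1 + o(1))$. Hence $L_1(Q_1(t^\beta)) = \ln(Q_1(t^\beta))$ is asymptotically $\beta d \, L_1(t)$. Iterating the logarithm, since $\ln(\beta d \, L_1(t)) = L_2(t) + \ln(\beta d) + o(1) = L_2(t)(1+o(1))$, one gets $L_2(Q_1(t^\beta)) = L_2(t)(1+o(1))$, and more generally $L_k(Q_1(t^\beta)) = L_k(t)(1+o(1))$ for all $k\ge 2$, while $L_1(Q_1(t^\beta)) = \beta d \, L_1(t)(1+o(1))$. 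This gives the asymptotic behavior of the vector $\mathcal L_m(Q_1(t^\beta))$ componentwise: its first component is $\beta d\, L_1(t)(1+o(1))$ and its $k$-th component ($2\le k\le m$) is $L_k(t)(1+o(1))$.

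Next I would feed this into $Q_0$. Since $Q_0(z) = c_{\alpha_*} z^{\alpha_*}(1+o(1))$ as $|z|\to\infty$ with $z$ ranging over vectors whose components all tend to $+\infty$ at wildly different rates — here one must note that $L_1(t) \gg L_2(t) \gg \cdots \gg L_m(t)$, so the lexicographically largest monomial $z^{\alpha_*}$ genuinely dominates — we obtain
\beqs
\omega(t) = Q_0(\mathcal L_m(Q_1(t^\beta))) = c_{\alpha_*}\,(\beta d\, L_1(t))^{\alpha_{*1}} L_2(t)^{\alpha_{*2}}\cdots L_m(t)^{\alpha_{*m}}(1+o(1)),
\eeqs
that is, $\omega(t) = c_{\alpha_*}(\beta d)^{\alpha_{*1}} \mathcal L_m(t)^{\alpha_*}(1+o(1))$, where $\mathcal L_m(t)^{\alpha_*} = \prod_{j=1}^m L_j(t)^{\alpha_{*j}}$. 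Raising to the power $-\lambda$ and multiplying by $\mathcal L_m(t)^{\lambda\alpha_*}$ gives
\beqs
\psi_\lambda(t)\,\mathcal L_m(t)^{\lambda\alpha_*} = \omega(t)^{-\lambda}\mathcal L_m(t)^{\lambda\alpha_*} = \bigl(c_{\alpha_*}(\beta d)^{\alpha_{*1}}\bigr)^{-\lambda}(1+o(1))^{-\lambda} \to \bigl(c_{\alpha_*}(\beta d)^{\alpha_{*1}}\bigr)^{-\lambda},
\eeqs
which is exactly \eqref{psilog}.

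The main obstacle is the careful justification that the $\alpha_*$-monomial dominates $Q_0$ evaluated along this particular curve: one needs the separation of scales $L_1(t) \gg L_2(t) \gg \cdots$ together with the lexicographic ordering of multi-indices to conclude that for any $\alpha \ne \alpha_*$ appearing in $Q_0$ the ratio $z^\alpha/z^{\alpha_*}$ tends to $0$ along $z = \mathcal L_m(Q_1(t^\beta))$. The factor $\beta d$ entering only through the first coordinate — and hence only through the exponent $\alpha_{*1}$ — must be tracked precisely, since the other coordinates contribute a factor of the form $L_k(t)^{\alpha_{*k}}(1+o(1))$ with no surviving constant. The remaining manipulations (the asymptotics of iterated logarithms, continuity of $x\mapsto x^{-\lambda}$) are routine.
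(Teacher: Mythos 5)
Your proof is correct and follows essentially the same route as the paper's: both isolate the dominant monomial $c_{\alpha_*}z^{\alpha_*}$ of $Q_0$ using the lexicographic order together with the separation of scales $L_1(t)\gg L_2(t)\gg\cdots$, track the factor $\beta d$ through the first iterated logarithm via $L_1(Q_1(t^\beta))\sim \beta d\,L_1(t)$ while the higher $L_k$ are asymptotically unchanged, and then raise the resulting equivalence $\omega(t)\sim c_{\alpha_*}(\beta d)^{\alpha_{*1}}\mathcal L_m(t)^{\alpha_*}$ to the power $-\lambda$. The only difference is cosmetic ordering of the three limits being multiplied together.
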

\begin{proof}
 First, if $k<j$ then $L_j(t)=o(L_k(t))$. 
With the lexicographic order, we have
$$\lim_{t\to\infty}\frac{ (Q_0\circ \mathcal L_m)(t)}{ c_{\alpha_*} \mathcal L_m(t)^{\alpha_*}}=1,
\text{ which implies }
\lim_{t\to\infty} \frac{ \omega(t)}{ \mathcal L_m(Q_1(t^\beta))^{\alpha_*}}=c_{\alpha_*}>0.$$
Moreover,
$$\lim_{t\to\infty} \frac{ \mathcal L_m(Q_1(t^\beta))^{\alpha_*}}{ \mathcal L_m(a_d t^{\beta d})^{\alpha_*}}=1.$$
By the properties of the logarithmic function, one has, for any $a,r>0$, that 
\beq\label{Lka}
\lim_{t\to\infty} \frac{L_k(at^r)}{L_k(t)}=\begin{cases}
                                            r,&\text{ for } k=1,\\
                                            1,&\text{for } k>1.
                                           \end{cases}
\eeq
Combining these gives
\beqs
\lim_{t\to\infty}\frac{ \omega(t)}{\mathcal L_m( t)^{\alpha_*}}
=\lim_{t\to\infty} \frac{ \omega(t)}{ \mathcal L_m(Q_1(t^\beta))^{\alpha_*}}\cdot 
\lim_{t\to\infty} \frac{ \mathcal L_m(Q_1(t^\beta))^{\alpha_*}}{ \mathcal L_m(a_d t^{\beta d})^{\alpha_*}}\cdot
\lim_{t\to\infty} \frac{\mathcal L_m(a_d t^{\beta d})^{\alpha_*}}{\mathcal L_m(t)^{\alpha_*}}
=c_{\alpha_*} (\beta d)^{\alpha_{*1}}.
\eeqs
Thus, \eqref{psilog} follows.
\end{proof}

As a consequence of \eqref{psilog}, we have
\beq\label{psiO}
 \psi_\lambda(t)\Oeq \mathcal L_m( t)^{-\lambda\alpha_*}.
 \eeq

 In particular, if $\alpha_*= p_0 e_k$ for some $p_0\in\N$, where $e_k$ is the $k$-th unit vector of the canonical basis of $\R^m$, then 
\beq\label{justLk}  
\psi_\lambda(t)\Oeq L_k(t)^{-p_0 \lambda}.
\eeq

We verify Conditions \ref{sysdef} and  \ref{weaksys} for the P-system $\Psi$.

\medskip
\noindent\textit{Verification of Condition \ref{sysdef}.}
Because of Property (P) for $\Psi$, we only need to check (ii) of Condition \ref{sysdef}.
By Chain Rule, 
\begin{align*}
 \ddt (\mathcal{L}_m(t)^\alpha)
&=\sum_{k=1}^m \alpha_k \mathcal{L}_m(t)^{\alpha-e_k}\frac{d}{dt}L_k(t)
=\sum_{k=1}^m \alpha_k \mathcal{L}_m(t)^{\alpha-e_k}\frac{1}{t \Pi_{p=1}^{k-1}L_p(t)}\\
&=\frac1t\sum_{k=1}^m \alpha_k \mathcal{L}_m(t)^{\alpha-e_1-e_2-\ldots e_k}.
\end{align*}
Then,
\begin{align*}
\frac{d}{dt}(\mathcal L_m(Q_1(t^\beta))^\alpha)&= \frac{\beta t^{\beta-1} Q_1'(t^\beta)}{Q_1(t^\beta)}\sum_{k=1}^m \alpha_k \mathcal{L}_m(Q_1(t^\beta))^{\alpha-e_1-e_2-\ldots -e_k}.
\end{align*}
We estimate
\beqs
|\frac{d}{dt}(\mathcal L_m(Q_1(t^\beta))^\alpha)|
=\mathcal O(t^{-1}\mathcal L_m(Q_1(t^\beta))^{\alpha_*})
=\mathcal O(t^{-1}\mathcal L_m(t))^{\alpha_*}).
\eeqs
Hence,
\beqs
|\omega'(t)|
=\mathcal O(t^{-1}\mathcal L_m(t))^{\alpha_*}).
\eeqs
Since $\psi_\lambda'(t)=-\lambda \omega(t)^{-\lambda-1} \omega'(t)$, we combine these with Lemma \ref{lemL} to obtain
\beqs  
|\psi_\lambda'(t)| = \mathcal{O} \left(\frac{\mathcal{L}_m(t)^{\alpha_*}}{t \cdot \mathcal{L}_m(t)^{(1+\lambda)\alpha_*}}\right)
= \mathcal{O} (t^{-1}).
\eeqs
This implies
\beqs 
|\psi_\lambda'(t)|
= \mathcal{O} \left(\mathcal{L}_m(t)^{-\mu \alpha_*}\right)=\mathcal O(\psi_\mu(t)), \quad \forall \mu >0.
\eeqs 
Therefore, by definition,
\beq \label{pszero}
\psi_\lambda'(t) \simP 0\text{ for all }\lambda>0.
\eeq 
Thus, $\Psi$ satisfies (ii) of Condition \ref{sysdef}
with $N_\lambda=0$ for all $\lambda>0$.


\medskip
\noindent\textit{Verification of Condition \ref{weaksys}.}
Thanks to \eqref{psilog} and \eqref{Lka}, the requirements (ii) and (iii) are met. 
For (i), using the above calculations we find
\begin{align*}
\omega'(t)&= \frac{\beta t^{\beta-1} Q_1'(t^\beta)}{Q_1(t^\beta)}\sum_{\alpha=(\alpha_1,\alpha_2,\ldots,\alpha_m)} c_\alpha \sum_{k=1}^m \alpha_k \mathcal{L}_m(Q_1(t^\beta))^{\alpha-e_1-e_2-\ldots e_k}.
\end{align*}

Let $\gamma_*$ be the largest multi-index among $\alpha-e_1-e_2-\ldots -e_k$ with non-zero $c_\alpha\alpha_k$.

Then $\gamma_*=\alpha_*-e_1-e_2-\ldots -e_k$ where $k$ is the smallest index for which the component $\alpha_{*k}\ge 1$.
Hence the corresponding coefficient is $c_{\alpha_*}\alpha_{*k}>0$.
Note also that $Q_1'(t^\beta)>0$ for large $t$. 
We conclude, for sufficiently large $t$, that $\omega'(t)>0$, and hence $\psi_\lambda'(t)<0$ .

\medskip
Now, we assume the force has the following expansion
\beqs
f(t) \simP  \sum_{n=1}^\infty \tilde \phi_n \omega(t)^{-\gamma_n}= \sum_{n=1}^\infty \tilde \phi_n \psi_{\gamma_n}(t) \text{ in }G_{\alpha,\sigma}.
\eeqs 

Let $S_*= \{  \sum_{j=1}^p \gamma_{n_j} :  p,n_1,n_2,\ldots,n_p\in\N\}$.
Then, again, this set $S_*$ satisfies Assumption \ref{C1}, hence we can assume \eqref{museq} and the expansion \eqref{fexp2}, which reads as
\beq \label{f-log2}
f(t)\simP \sum_{n=1}^{\infty} \phi_n \omega(t)^{-\lambda_n}\quad \text{in }G_{\alpha,\sigma},
\eeq
for a sequence $(\phi_n)_{n=1}^\infty$ in $G_{\alpha,\sigma}$.


\begin{theorem}\label{logcase}
Let $\varphi$ be defined by \eqref{philog} and assume the expansion \eqref{f-log2}. 
\begin{enumerate}[label={\rm(\roman*)}]
 \item 
 Then any Leray-Hopf weak solution $u(t)$ of the NSE \eqref{fctnse} has the asymptotic expansion
\beq\label{u-log}
u(t)\simP  \sum_{n=1}^\infty  \xi_n  \omega(t)^{-\lambda_n}\quad \text{in }G_{\alpha+1-\rho,\sigma}\text{ for all } \rho \in (0,1),
 \eeq
where 
\beq\label{xin-log}
\xi_1=A^{-1}\phi_1,\quad 
\xi_n=A^{-1}\Big(\phi_n - \sum_{\stackrel{1\le k,m\le n-1,}{\lambda_k+\lambda_m=\lambda_n}}B(\xi_k,\xi_m)\Big) \quad\text{for } n\ge 2.
\eeq

\item  
By defining $\varphi_\lambda(t)=\mathcal L_m(t)^{-\lambda\alpha_*}$ and $\Phi=(\varphi_\lambda)_{\lambda>0}$, we can \emph{equivalently} replace $\simP$ with $\dsim{\Phi}$ in \eqref{f-log2} and \eqref{u-log}, in the sense of  \eqref{equiex2}.

In particular, if $\alpha_*$ is co-linear with the $k$-th unit vector $e_k$ of the canonical basis of $\R^m$, 
then this replacement still holds true for 
$\Phi=(L_k(t)^{-\lambda})_{\lambda>0}$.
\end{enumerate}
\end{theorem}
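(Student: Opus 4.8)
The plan is to derive part (i) as an immediate corollary of Theorem~\ref{mainthm} and to obtain part (ii) from the equivalence-of-systems bookkeeping recorded in remark~(e) following Definition~\ref{dpremsys}; essentially no new analysis is needed, only a verification of hypotheses and a simplification of the abstract recursion.

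For part (i), I would note that the discussion immediately preceding the statement has already checked that the $P$-system $\Psi=(\psi_\lambda)_{\lambda>0}$, $\psi_\lambda=\omega^{-\lambda}$, satisfies Conditions~\ref{sysdef} and~\ref{weaksys}, that $S_*=\{\sum_{j=1}^p\gamma_{n_j}:p,n_1,\dots,n_p\in\N\}$ fulfils Assumption~\ref{C1}, and that \eqref{f-log2} is precisely the force expansion \eqref{fexp2} for this system. Hence I would simply invoke Theorem~\ref{mainthm}, which yields \eqref{u-log} with $\xi_n$ defined by \eqref{xi1}--\eqref{xin1}, and then simplify: by Property~(P) one has $d_{\lambda_k,\lambda_m}=1$ and $\lambda_k\wedge\lambda_m=\lambda_k+\lambda_m$, so the bilinear sum in \eqref{xin1} becomes $\sum_{\lambda_k+\lambda_m=\lambda_n}B(\xi_k,\xi_m)$; and since by \eqref{pszero} every $N_\lambda=0$, the set $G_x$ is empty for all $x$, so no pair $(p,k)$ with $\lambda_p^\vee(k)=\lambda_n$ exists and therefore $\chi_n=0$ in \eqref{chin}. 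With these two reductions \eqref{xi1}--\eqref{xin1} collapses to exactly \eqref{xin-log}.

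For part (ii), I would set $\varphi_\lambda(t)=\mathcal L_m(t)^{-\lambda\alpha_*}$ and observe that $\Phi=(\varphi_\lambda)_{\lambda>0}$ is itself a $P$-system (base function $\mathcal L_m(t)^{-\alpha_*}$) with $\varphi_\lambda(t)\to0$, the latter because $|\alpha_*|\ge1$ forces $\mathcal L_m(t)^{\alpha_*}\to\infty$; hence $\Phi$ satisfies (a) and (b) of Definition~\ref{premsys} by Property~(P). By \eqref{psiO}, $\psi_\lambda(t)\Oeq\varphi_\lambda(t)$ for every $\lambda>0$, i.e.\ \eqref{ppmu} holds with the identity bijection $\mu(\lambda)=\lambda$, and remark~(e) following Definition~\ref{dpremsys}, cf.\ \eqref{equiex1}--\eqref{equiex2}, then legitimises replacing $\simP$ by $\dsim{\Phi}$ in both \eqref{f-log2} and \eqref{u-log}. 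In the special case $\alpha_*=p_0e_k$ I would use $\Phi=(L_k(t)^{-\lambda})_{\lambda>0}$ instead, again a $P$-system obeying (a), (b); by \eqref{justLk} one has $\psi_\lambda(t)\Oeq L_k(t)^{-p_0\lambda}$, so \eqref{ppmu} holds with the strictly increasing bijection $\mu(\lambda)=p_0\lambda$ of $(0,\infty)$ onto itself, and the same remark closes the case.

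Insofar as there is an obstacle, it is bookkeeping rather than analytic: one must be careful to confirm that $\chi_n$ really vanishes (which rests entirely on $N_\lambda=0$, equivalently on \eqref{pszero}) and that $\mathcal L_m(t)^{\alpha_*}\to\infty$ so that $\Phi$ is a genuine system with $\varphi_\lambda(t)\to0$ — this follows from $|\alpha_*|\ge1$ together with the fact that every iterated logarithm $L_j$ tends to infinity. Everything else is a faithful transcription of Theorem~\ref{mainthm} and of the abstract equivalence remark, with no further estimates required.
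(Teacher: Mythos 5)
Your proposal is correct and follows essentially the same route as the paper's own proof: part (i) is exactly the paper's application of Theorem \ref{mainthm} with $\chi_n=0$ from \eqref{pszero} (your extra remark that Property (P) gives $d_{\lambda_k,\lambda_m}=1$ and $\lambda_k\wedge\lambda_m=\lambda_k+\lambda_m$ is a correct and welcome clarification of why \eqref{xin1} collapses to \eqref{xin-log}), and part (ii) is the paper's use of \eqref{psiO} and \eqref{justLk} together with remark (e) after Definition \ref{dpremsys}, with $\mu(\lambda)=\lambda$ and $\mu(\lambda)=p_0\lambda$ respectively. No gaps.
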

\begin{proof} (i) Applying Theorem \ref{mainthm} while noting that $\chi_n =0$ in \eqref{chin} for all $n \in \N$ due to the fact  \eqref{pszero}, we deduce \eqref{u-log} from \eqref{uexpand}.
 
(ii) Let $\varphi_\lambda(t)=\mathcal L_m(t)^{-\lambda\alpha_*}$ and $\Phi=(\varphi_\lambda)_{\lambda>0}$. Note by \eqref{psiO} that
$\psi_\lambda(t)\Oeq \varphi_\lambda(t)$ which implies \eqref{ppmu} with $\mu(\lambda)=\lambda$.
Then the replacement is valid thanks to \eqref{equiex1} and \eqref{equiex2} in remark (e) after Definition \ref{dpremsys}.

Now, consider the case when $\alpha_*=p_0 e_k$ for some $p_0\in \N$. Let $\varphi_{\lambda}(t)=L_k(t)^{-\lambda}$ and, again,  $\Phi=(\varphi_\lambda)_{\lambda>0}$.
By \eqref{justLk}, we have
\beqs
\psi_\lambda(t)\Oeq \varphi_{p_0\lambda}(t)=\varphi_{\mu(\lambda)}(t),\quad\text{where }\mu(\lambda)=p_0\lambda.
\eeqs
Hence, the replacement is valid again by the same  \eqref{equiex1} and \eqref{equiex2}.
%
%
\end{proof}

\begin{corollary}\label{purelog}
Given $m\in \N$, define $\Psi=(L_m(t)^{-\lambda})_{\lambda>0}$.  
Suppose $(\lambda_n)_{n=1}^\infty$ is a strictly increasing, divergent sequence of positive numbers such that the set  $\{\lambda_n:n\in\N\}$ preserves the addition.
If 
\beqs 
f(t)\simP \sum_{n=1}^{\infty} \phi_n L_m(t)^{-\lambda_n}\quad \text{in }G_{\alpha,\sigma},
\eeqs
 then any Leray-Hopf weak solution $u(t)$ of the NSE \eqref{fctnse} admits the same asymptotic expansion
\beqs 
u(t)\simP \sum_{n=1}^{\infty} \xi_n L_m(t)^{-\lambda_n}\quad \text{in }G_{\alpha+1-\rho,\sigma}\text{ for all } \rho \in (0,1),
\eeqs
where $\xi_n$'s are defined by \eqref{xin-log}.
\end{corollary}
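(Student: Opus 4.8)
The plan is to obtain this corollary as the special instance of Theorem \ref{logcase}(i) in which the ``profile'' function $\omega$ of \eqref{philog} is $L_m$ itself. Concretely, in the construction \eqref{philog} I would take $\beta=1$, $Q_1(s)=s$ (so $d=1$, $a_d=1$), and $Q_0(z)=z_m$, the $m$-th coordinate function on $\R^m$; then $\omega(t)=(Q_0\circ\mathcal L_m\circ Q_1)(t)=L_m(t)$, its leading multi-index is $\alpha_*=e_m$ with $c_{\alpha_*}=1$, and the associated P-system $\Psi=(\omega(t)^{-\lambda})_{\lambda>0}$ is exactly $\big(L_m(t)^{-\lambda}\big)_{\lambda>0}$. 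By the verifications carried out in subsection \ref{ss2}, this $\Psi$ satisfies Conditions \ref{sysdef} and \ref{weaksys}; in particular, \eqref{pszero} gives $\psi_\lambda'\simP 0$, hence $N_\lambda=0$ and $G_x=\emptyset$ for every $x>0$.

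The remaining point is to line up the hypotheses. Since $G_x=\emptyset$ for all $x$, the operation $\vee$ is trivial, so every subset of $(0,\infty)$ preserves $\vee$; and for a P-system, preserving $\wedge$ is precisely preserving addition (Property (P) and the remark after it). Thus the standing hypothesis that $\{\lambda_n:n\in\N\}$ preserves the addition means this set preserves both $\vee$ and $\wedge$, so Assumption \ref{C1} is met with $S_*=\mathcal G_\Psi(\{\lambda_n\})=\{\lambda_n:n\in\N\}$ and the given increasing enumeration. By Remark \ref{sameexp}, the rewritten force expansion \eqref{fexp2}, here \eqref{f-log2}, is then literally the hypothesis $f(t)\simP\sum_{n=1}^\infty\phi_n L_m(t)^{-\lambda_n}$ in $G_{\alpha,\sigma}$ of the corollary (with $\gamma_n=\lambda_n$, $\tilde\phi_n=\phi_n$).

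Applying Theorem \ref{logcase}(i) now yields $u(t)\simP\sum_{n=1}^\infty\xi_n L_m(t)^{-\lambda_n}$ in $G_{\alpha+1-\rho,\sigma}$ for every $\rho\in(0,1)$, with $\xi_n$ given by \eqref{xin-log}; indeed, because $N_\lambda=0$, the terms $\chi_n$ in \eqref{chin} all vanish, and the relations $\lambda_k\wedge\lambda_m=\lambda_k+\lambda_m$, $d_{\lambda_k,\lambda_m}=1$ reduce \eqref{xin1} to exactly the recursion \eqref{xin-log}. This is the assertion of the corollary.

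There is no analytic difficulty in this argument; all the work sits in Theorem \ref{mainthm} (via Theorem \ref{logcase}). The only step requiring attention is the bookkeeping in the second paragraph: one must be certain that addition-closure of $\{\lambda_n\}$, together with the triviality of $\vee$ for this particular system, forces $\mathcal G_\Psi(\{\lambda_n\})$ to be \emph{exactly} $\{\lambda_n:n\in\N\}$, so that no auxiliary indices are introduced and the displayed formula \eqref{xin-log} holds verbatim.
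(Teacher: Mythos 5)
Your proof is correct and follows exactly the paper's route: specialize the construction \eqref{philog} with $Q_0(z)=z_m$, $Q_1(t)=t$, $\beta=1$ so that $\omega=L_m$, then invoke Theorem \ref{logcase}(i) together with Remark \ref{sameexp} (via the addition-closure of $\{\lambda_n\}$) to conclude. The extra bookkeeping you supply about $\mathcal G_\Psi(\{\lambda_n\})=\{\lambda_n\}$ and the vanishing of $\chi_n$ is consistent with the verifications already carried out in subsection \ref{ss2}.
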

\begin{proof}
We choose $Q_0(z_1,z_2,\cdots,z_m)=z_m$, $Q_1(t)=t$ and $\beta=1$.
Notice, in this case,  that \eqref{philog} becomes $\omega(t)=L_m(t)$. 
Then the result in this corollary follows Theorem \ref{logcase}(i) and Remark \ref{sameexp}.
\end{proof}


\begin{example} \label{log-eg}
Thanks to Theorem \ref{logcase}, we can have asymptotic expansions of many different types. We illustrate it with just two here.
Let $(\lambda_n)_{n=1}^\infty$ be a strictly increasing, divergent sequence of positive numbers that preserves the addition.
By Remark \ref{sameexp}, we can use, from the beginning, expansion \eqref{f-log2} for the force.
We will also use the replacements indicated in Theorem \ref{logcase}(ii).

(a) When $m=5$, $Q_0(z_1,z_2,\cdots,z_5)=3z_1^2 z_3-2 z_2 z_5^4$, $Q_0(t)=t$, and $\beta=1$, if
 \beqs
 f(t)\dsim{\Phi} \sum_{n=1}^\infty \phi_n \big[3(\ln t)^2 L_3(t)  - 2 L_2(t)L_5(t)^4\big]^{-\lambda_n} \quad \text{in }G_{\alpha,\sigma},
 \eeqs
 where  $\Phi=\big((\ln t)^{-2\lambda}L_3(t)^{-\lambda}\big)_{\lambda>0}$,
 then 
 \beqs
 u(t)\dsim{\Phi} \sum_{n=1}^\infty \xi_n \big[3(\ln t)^2 L_3(t)  - 2 L_2(t)L_5(t)^4\big]^{-\lambda_n}\quad \text{in }G_{\alpha+1-\rho,\sigma}\text{ for all } \rho \in (0,1).
 \eeqs

(b) When $m=7$, $Q_0(z_1,z_2,\cdots,z_7)=4z_2-z_7^5+3$, $Q_1(t)=t^3-3t+1$, and $\beta=1/2$, if
 \beqs
 f(t)\dsim{\Phi} \sum_{n=1}^\infty \phi_n \big[4 L_2(t^{3/2}-3 t^{1/2}+1)  -  L_7(t^{3/2}-3t^{1/2}+1)^5 +3\big]^{-\lambda_n} \quad \text{in }G_{\alpha,\sigma},
 \eeqs
 where  $\Phi=(L_2(t)^{-\lambda})_{\lambda>0}$,
then 
 \beqs
 u(t)\dsim{\Phi}  \sum_{n=1}^\infty \xi_n \big[4 L_2(t^{3/2}-3 t^{1/2}+1)  -  L_7(t^{3/2}-3t^{1/2}+1)^5 +3\big]^{-\lambda_n}\quad \text{in }G_{\alpha+1-\rho,\sigma}\text{ for all } \rho \in (0,1).
 \eeqs
\end{example}
\begin{example} \label{sin-eg}
Given $m \in \N$.
Consider the system
\beqs
\Psi=(\psi_\lambda)_{\lambda>0}=\left(\left[\sin (L_m^{-1}(t))\right]^{\lambda} \right)_{\lambda>0}.
\eeqs

Clearly, $\sin (L_m^{-1}(t))$ is  
a positive function defined on $[T_*,\infty)$ for some $T_* > 0$ and $\sin (L_m^{-1}(t))\to 0$ as $t\to\infty$. Thus $\Psi$ is a  P-system.

Since $\sin^{\lambda}(x)$ is increasing in $x$ on $(0,\pi/2)$, it implies that $\left[\sin (L_m^{-1}(t))\right]^{\lambda}$ is decreasing on $[T_1,\infty)$ for $T_1$ large enough.
Note that    
\beqs
\frac{2x}{\pi} \le \sin(x) \le x, \quad  0  \le  x \le \pi/2,
\eeqs
 one has $\sin (L_m^{-1}(t)) \Oeq L_m^{-1}(t)$, since $0 < L_m^{-1}(t) \le 1$ for large $t$.

This and Lemma \ref{lemL} yield
\beqs
\left[\sin (L_m^{-1}(at))\right]^{\lambda} \Oeq \left[L_m^{-1}(at)\right]^{\lambda}  \Oeq \left[L_m^{-1}(t)\right]^{\lambda} \Oeq \left[\sin (L_m^{-1}(t))\right]^{\lambda}. 
\eeqs
Clearly,
\beqs
\lim_{ t \to \infty} \frac{e^{-\alpha t }}{\psi_{\lambda}(t)}= \lim_{ t \to \infty} e^{-\alpha t }L_m(t)^{\lambda}= 0. 
\eeqs
Therefore, $\Psi$ satisfies Condition \ref{weaksys}.
We write
\begin{align*}
\psi'_\lambda(t)= -\lambda\left(\sin ( L_m^{-1}(t)  \right)^{\lambda-1}\cos( L_m^{-1}(t) ) \frac{1}{t \left(\Pi_{p=1}^{m-1}L_p(t) \right) L_m(t)^2},
\end{align*}
and estimate
\begin{align*}
|\psi'_\lambda(t)|=\mathcal{O} \left( t^{-1} (\sin ( L_m^{-1}(t))^{\lambda-1}\right)
=\mathcal{O} \left( t^{-1} L_m(t)^{-\lambda+1}\right)
=\mathcal{O} \left(L_m(t)^{-\mu}  \right) = \mathcal{O} (\psi_{\mu}(t))
\end{align*}
for all $\mu > 0$.
Then $\Psi$ satisfies (ii) of Condition \ref{sysdef} with $N_\lambda=0$ for all $\lambda>0$, and thus, all parts of  Condition \ref{sysdef} due to Property (P).
Let $(\lambda_n)_{n=1}^\infty$ be as in Example \ref{log-eg}. By Theorem \ref{mainthm},
if
\beqs \label{f-sin}
f(t)\simP \sum_{n=1}^{\infty} \phi_n \left[\sin (L_m^{-1}(t))\right]^{\lambda_n} \quad \text{in }G_{\alpha,\sigma},
\eeqs
then
\beqs \label{u-sin}
u(t)\simP \sum_{n=1}^{\infty} \xi_n \left[\sin (L_m^{-1}(t))\right]^{\lambda_n} \quad \text{in }G_{\alpha+1-\rho,\sigma}\text{ for all } \rho \in (0,1),
\eeqs
with $\xi_n$'s being \eqref{xin-log}.
\end{example}
\begin{example}\label{tan-eg}
Given $m \in \N$.
Consider the system
\beqs
\Psi=(\psi_\lambda)_{\lambda>0}=\left(\left[\tan (L_m^{-1}(t))\right]^{\lambda} \right)_{\lambda>0}.
\eeqs
Similar to Example \ref{sin-eg}, using the fact that
\beqs
x \le \tan(x) \le 2x, \quad  0  \le  x \le 1/2,
\eeqs
one can verify that $\Psi$ satisfies Condition \ref{weaksys} and Condition \ref{sysdef} with $N_\lambda=0$ for all $\lambda>0$. 

Again, let $(\lambda_n)_{n=1}^\infty$ be as in Example \ref{log-eg}. 
We obtain that if
\beqs \label{f-tan}
f(t)\simP \sum_{n=1}^{\infty} \phi_n \left[\tan (L_m^{-1}(t))\right]^{\lambda_n} \quad \text{in }G_{\alpha,\sigma},
\eeqs
then
\beqs \label{u-tan}
u(t)\simP \sum_{n=1}^{\infty} \xi_n \left[\tan (L_m^{-1}(t))\right]^{\lambda_n} \quad \text{in }G_{\alpha+1-\rho,\sigma}\text{ for all } \rho \in (0,1),
\eeqs
with $\xi_n$'s from \eqref{xin-log}.

\end{example}

\subsection{A system with infinite expansions for the derivatives}\label{ss3}

In the previous two subsections, the expansion \eqref{ppex} is zero or a finite sum.
In this subsection, we demonstrate the case when each expansion \eqref{ppex} is an infinite sum. 

Consider a particular P-system $\Psi=(\psi_\lambda)_{\lambda>0}$ with $\psi_\lambda=(\sqrt{t}+1)^{-\lambda}$. 
Let $\lambda>0$. We see, for any $t>0$,  that
\begin{align*}
\psi_\lambda'(t)
&=-\lambda (\sqrt t+1)^{-\lambda-1 }\frac12\frac{1} {\sqrt t}
=-\frac{\lambda}{2} (\sqrt t+1)^{-\lambda-1 }\frac{1} {\sqrt t+1}\cdot \frac{1}{1-\frac{1}{\sqrt t+1}}\\
&=-\frac{\lambda}{2} (\sqrt t+1)^{-\lambda-1 }\sum_{k=1}^\infty \frac{1} {(\sqrt t+1)^k}
=\sum_{k=1}^\infty -\frac{\lambda}{2} (\sqrt t+1)^{-\lambda-k-1 }.
\end{align*}

Applying Lemma \ref{power-asym} to $X=\R$,  $\varphi(t)=(\sqrt t+1)^{-1}$, $\lambda_n=\lambda+n+1$, $M=2$, $\xi_n=-\lambda/2$, $c_0=\lambda/2$, $\kappa=1$, 
we deduce that the derivative $\psi_\lambda'(t)$, in fact, has the expansion 
\beqs
\psi_\lambda'(t) \simP \sum_{k=1}^\infty -\frac{\lambda}{2} (\sqrt t+1)^{-\lambda-k-1 }.
\eeqs

Thus, we have expansion \eqref{ppex} with 
\beqs
N_\lambda=\infty,\text{ and}\quad c_{\lambda,k}=-\lambda/2,\quad \lambda^\vee(k)=\lambda+1+k\text{ for all }k\in\N.
\eeqs

With this, Property (P), and some elementary estimates, we can verify that Conditions \ref{sysdef} and \ref{weaksys} are met.

We assume the force has the expansion
\beqs
f(t)\simP \, \sum_{n=1}^\infty \tilde \phi_n \psi_{\gamma_n}(t) =\sum_{n=1}^\infty \tilde \phi_n (\sqrt{t}+1)^{-\gamma_n} \text{ in }G_{\alpha,\sigma}.
\eeqs

Define
\beqs
S_*=\Big \{  (\sum_{j=1}^p \gamma_{n_j}) + k: p,n_1,n_2,\ldots,n_p\in\N, \
k=0 \text{ or }k\in\N\cap[2,\infty)\Big  \}. 
\eeqs

Then the set $S_*$ satisfies Assumption \ref{C1}, and we can assume \eqref{museq} and the expansion 
\beq \label{f-root2}
f(t) \simP \sum_{n=1}^{\infty} \phi_n (\sqrt{t}+1)^{-\lambda_n }\quad \text{in }G_{\alpha,\sigma},
\eeq
for a sequence $(\phi_n)_{n=1}^\infty$  in $G_{\alpha,\sigma}$.
We apply Theorem \ref{mainthm} and obtain the following.

\begin{proposition}\label{rootcase}
Assume \eqref{f-root2}. Then any Leray-Hopf weak solution $u(t)$ of the NSE \eqref{fctnse} admits the asymptotic expansion
\beqs 
u(t) \simP \sum_{n=1}^\infty  \xi_n (\sqrt{t}+1)^{-\lambda_n}\quad \text{in }G_{\alpha+1-\rho,\sigma}\text{ for all } \rho \in (0,1),
 \eeqs
where 
\begin{align*} 
\xi_1=A^{-1}\phi_1,\quad 
\xi_n&=A^{-1}\Big(\phi_n + \frac12\sum_{ p \in \mathcal{Z}_n} \lambda_p \xi_p - \sum_{\stackrel{1\le k,m\le n-1,}{\lambda_k+\lambda_m=\lambda_n}}B(\xi_k,\xi_m)\Big) \quad\text{for } n\ge 2,
\end{align*}
with
$
\mathcal{Z}_n= \{ p \in \N\cap [1, n-1]: \exists k\in\N, \lambda_p+1+k=\lambda_n\}.
$
\end{proposition}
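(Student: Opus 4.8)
The plan is to deduce Proposition \ref{rootcase} as a direct application of Theorem \ref{mainthm} to the P-system $\Psi=((\sqrt t+1)^{-\lambda})_{\lambda>0}$, so that the only real work is to check the hypotheses and then to simplify the abstract recursion \eqref{xi1}--\eqref{chin} into the stated closed form. First I would record that $\Psi$ satisfies Conditions \ref{sysdef} and \ref{weaksys}: Property (P) gives conditions (a), (b) of Definition \ref{premsys} and part (i) of Condition \ref{sysdef} with $d_{\lambda,\mu}=1$, $\lambda\wedge\mu=\lambda+\mu$; the expansion of $\psi_\lambda'$ computed in the text above, validated by Lemma \ref{power-asym} (from the appendix), gives part (ii) of Condition \ref{sysdef} with $N_\lambda=\infty$, $c_{\lambda,k}=-\lambda/2$, and $\lambda^\vee(k)=\lambda+1+k$; and elementary estimates (monotonicity of $(\sqrt t+1)^{-\lambda}$ in $t$, the bound $e^{-\alpha t}=o((\sqrt t+1)^{-\lambda})$, and $(\sqrt{at}+1)^{-\lambda}\Oeq(\sqrt t+1)^{-\lambda}$) give Condition \ref{weaksys}. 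These are all routine and I would state them briefly rather than grind through them.

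Next I would verify Assumption \ref{C1} for the set $S_*$ displayed just before the proposition. The point is that $S_*$ contains $\{\gamma_n:n\in\N\}$ (take $p=1$, $k=0$), is closed under $\wedge$ (i.e. addition, since the sum of two elements of $S_*$ is again of the form $\sum\gamma_{n_j}+k$ with $k$ admissible: either $0+0=0$, or a sum involving a $k\ge2$, or $0+1$ which however does not occur since $1\notin\{0\}\cup[2,\infty)$, so one checks the admissible set $\{0\}\cup(\N\cap[2,\infty))$ is closed under addition — indeed $0+m=m$, and $m+n\ge4\ge2$ for $m,n\ge2$), and is closed under $\vee$, i.e. $x\in S_*\Rightarrow x^\vee(k)=x+1+k\in S_*$ for all $k\in\N$, because adding $1+k$ with $k\ge1$ means adding an integer $\ge2$. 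Hence $S_*\supset\mathcal G_\Psi(\{\gamma_n\})$ and can be enumerated as a strictly increasing divergent sequence $(\lambda_n)$, so \eqref{museq} holds and \eqref{f-root2} is the rewriting \eqref{fexp2} of the assumed expansion of $f$.

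With the hypotheses in place, Theorem \ref{mainthm} yields the expansion $u(t)\simP\sum\xi_n(\sqrt t+1)^{-\lambda_n}$ in $G_{\alpha+1-\rho,\sigma}$ for all $\rho\in(0,1)$, with $\xi_n$ given by \eqref{xi1}--\eqref{chin}. It remains to simplify: since $d_{\lambda_k,\lambda_m}=1$ the bilinear sum in \eqref{xin1} becomes $\sum_{\lambda_k+\lambda_m=\lambda_n}B(\xi_k,\xi_m)$; and the correction term $\chi_n$ from \eqref{chin} is $\sum c_{\lambda_p,k}\xi_p=\sum(-\lambda_p/2)\xi_p$ over pairs $(p,k)$ with $p\le n-1$, $k\in\N$, and $\lambda_p^\vee(k)=\lambda_p+1+k=\lambda_n$. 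For fixed $p$ there is at most one such $k$ (namely $k=\lambda_n-\lambda_p-1$, which must lie in $\N$), so $\chi_n=-\tfrac12\sum_{p\in\mathcal Z_n}\lambda_p\xi_p$ with $\mathcal Z_n$ as defined. Substituting $-\chi_n=+\tfrac12\sum_{p\in\mathcal Z_n}\lambda_p\xi_p$ into \eqref{xin1} produces exactly the stated formula for $\xi_n$. I do not anticipate a genuine obstacle here; the mildly delicate point is the bookkeeping in checking that $S_*$ is closed under $\vee$ and $\wedge$ (i.e. that the index set $\{0\}\cup(\N\cap[2,\infty))$ behaves correctly under addition and under the shift $k\mapsto 1+k$), and in confirming uniqueness of the index $k$ in the definition of $\chi_n$ so that the double sum collapses to the single sum over $\mathcal Z_n$.
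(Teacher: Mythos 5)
Your proposal is correct and follows essentially the same route as the paper: verify that the P-system $\big((\sqrt t+1)^{-\lambda}\big)_{\lambda>0}$ satisfies Conditions \ref{sysdef} (via Lemma \ref{power-asym} applied to the series for $\psi_\lambda'$, giving $N_\lambda=\infty$, $c_{\lambda,k}=-\lambda/2$, $\lambda^\vee(k)=\lambda+1+k$) and \ref{weaksys}, check that the displayed $S_*$ satisfies Assumption \ref{C1}, and then invoke Theorem \ref{mainthm}, collapsing $\chi_n$ to $-\tfrac12\sum_{p\in\mathcal Z_n}\lambda_p\xi_p$. Your bookkeeping for the closure of $\{0\}\cup(\N\cap[2,\infty))$ under the $\vee$ and $\wedge$ operations and for the uniqueness of $k$ in the definition of $\mathcal Z_n$ is in fact more explicit than what the paper records.
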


\subsection{Expansions using a background system} \label{ss4}

In this subsection, we present a scenario for which the use of the background systems in Section \ref{discrtsec} is essential.
To motivate our more general force $f$ later, we consider a simple case first.
Let $\gamma\in(0,1)$, $\beta_0>0$, and
\beqs
f(t)= \frac{\phi}{(t^\gamma+1)^{\beta_0}}\quad\text{with } \phi\in G_{\alpha,\sigma}.
\eeqs

Then we expect a solution $u(t)$ of the NSE \eqref{fctnse} to have an asymptotic expansion containing at least $(t^\gamma+1)^{-\beta}$ for some $\beta>0$.
The derivative term $u_t$ in the NSE will contain $\ddt (t^\gamma+1)^{-\beta}$, which is 
\begin{align*}
\ddt (t^\gamma+1)^{-\beta}&=\frac{ -\gamma \beta}{(t^\gamma+1)^{ \beta +1}t^{1-\gamma} }
=\frac{ -\gamma \beta}{(t^\gamma+1)^{ \beta +1}(t^{1-\gamma}+1)(1-\frac1{t^{1-\gamma}+1}) }\\
&=\frac{ -\gamma \beta}{(t^\gamma+1)^{ \beta +1}  } \sum_{k=1}^\infty \frac{1} {(t^{1-\gamma}+1)^k},
\end{align*}
thus,
\beq\label{dtgb}
\ddt (t^\gamma+1)^{-\beta}
= \sum_{k=1}^\infty \frac{ -\gamma \beta} {(t^\gamma+1)^{ \beta +1} (t^{1-\gamma}+1)^k}.
\eeq

Thanks to the term $Au$ in the NSE, \eqref{dtgb} in turn suggests that a possible asymptotic expansion of $u(t)$ may have to include infinitely many terms $(t^\gamma+1)^{-\lambda}(t^{1-\gamma}+1)^{-\mu}$.

Because of this, we now consider a function
$\psi(t)=(t^\gamma+1)^{-\lambda}(t^{1-\gamma}+1)^{-\mu}$.
Taking the derivative by the product rule gives
\beqs
\psi'(t)=\Big(\ddt \frac{1}{(t^\gamma+1)^\lambda}\Big)\frac{1}{(t^{1-\gamma}+1)^\mu}
+\frac{1}{(t^\gamma+1)^\lambda}\Big(\ddt \frac{1}{(t^{1-\gamma}+1)^\mu}\Big).
\eeqs

Using \eqref{dtgb} with $\beta:=\lambda$ for the first derivative, and with $\gamma:=1-\gamma$, $\beta:=\mu$ for second derivative, 
we obtain
\beq\label{psip}
 \psi'(t)
= \sum_{k=1}^\infty \frac{ -\gamma \lambda} {(t^\gamma+1)^{ \lambda +1} (t^{1-\gamma}+1)^{\mu+k}}
 + \sum_{k=1}^\infty \frac{ -(1-\gamma) \mu} {(t^{1-\gamma}+1)^{ \mu +1} (t^\gamma+1)^{\lambda+k}}.
\eeq

Observe that the sums in \eqref{psip} involve the functions of \underline{the same form} as $\psi$, but with \underline{different} powers.
Also, the equality can be converted, under proper conditions, to an asymptotic expansion with the  background system $\Phi=(t^{-\lambda})_{\lambda>0}$.

\medskip
\noindent\textbf{Fixing a background system.} Let us fix the P-system $\Phi=(\varphi_\lambda)_{\lambda>0}$, where $\varphi_\lambda(t)=t^{-\lambda}$.
By (i)--(iii) in subsection \ref{ss1}, we see that $\Phi$ satisfies 
Condition (iii) of Assumption \ref{dsysdef}.

\medskip
From the above observation, we consider a force $f$ having the following general expansion
\beq\label{f-prod}
f(t) \dsim{\Phi} \sum_{n=1}^{\infty} \tilde\phi_n \tilde\psi_n(t)=\sum_{n=1}^{\infty}  \frac{\tilde \phi_n}{(t^\gamma+1)^{\tilde \alpha_n}(t^{1-\gamma}+1)^{\tilde \beta_n} } \quad \text{ in }G_{\alpha,\sigma},
\eeq
where $\tilde\psi_n(t)= (t^\gamma+1)^{-\tilde \alpha_n}(t^{1-\gamma}+1)^{-\tilde \beta_n}$,
$\gamma$ is a constant in the interval $(0,1)$, $(\tilde \alpha_n)_{n=1}^\infty$ and $(\tilde \beta_n)_{n=1}^\infty$ are sequences of non-negative numbers
such that 
\beq\label{lamtil} \tilde\lambda_n\eqdef \gamma\tilde\alpha_n+(1-\gamma)\tilde\beta_n\text{ is positive, strictly increasing (in $n$) to infinity}. 
\eeq 

Note that the expansion in \eqref{f-prod} is understood in the sense of Definition \ref{dpremsys} with $\tilde\Psi=(\tilde\psi_n)_{n=1}^\infty$ replacing $\Psi$, and $\tilde\lambda_n$ replacing $\lambda_n$. 

A simple example of \eqref{f-prod} is a finite sum $f(t)= \sum_{n=1}^N \tilde\phi_n \tilde\psi_n(t)$ for some $N\in\N$.
For more complicated cases of infinite sums, see Corollary \ref{cor79} below.

\begin{assumption}\label{gamassm}
The number  $\gamma$ is irrational, while  numbers $\tilde \alpha_n$ and $\tilde \beta_n$ are rational for all $n\in\N$. 
\end{assumption}

Define the sets
\begin{align}
\label{Ss1} 
S_*&=\Big \{  \gamma(\sum_{j=1}^p \tilde \alpha_{n_j} + k) + (1-\gamma)(\sum_{j=1}^p \tilde\beta_{n_j} + \ell) : \notag \\
&\qquad p,n_1,n_2,\ldots,n_p\in\N,\ (k,\ell)\in \N^2\cup (0,0)\Big  \},\\
E_1&=\Big\{\sum_{j=1}^p \tilde \alpha_{n_j} + k: p,n_1,n_2,\ldots,n_p\in\N,\ k\in\N\cup \{0\}\Big\}, \notag \\
E_2 &=\Big\{ \sum_{j=1}^q \tilde\beta_{k_j} + \ell:q,k_1,k_2,\ldots,k_q\in\N,\ \ell\in\N\cup \{0\}\Big\}.\notag
\end{align}

Note that 
\beq\label{Ssubar}
 \{\tilde \lambda_n:n\in\N\}\subset S_*\subset E^*\eqdef \{ \gamma\alpha+(1-\gamma)\beta: \alpha\in E_1,\ \beta\in E_2 \}.
\eeq

We see that $S_*$ and $E^*$ preserve the addition. 


\begin{lemma}\label{unique}
For each $\mu\in E^*$, there exists a unique pair $(\alpha,\beta)\in E_1\times E_2 $ such that
\beq\label{mudecomp}
\mu=\gamma\alpha+(1-\gamma)\beta.
\eeq 
\end{lemma}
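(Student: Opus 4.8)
The plan is to dispatch existence instantly and reduce uniqueness to the irrationality of $\gamma$. Existence is nothing more than the inclusion already recorded in \eqref{Ssubar}: by the very definition of $E^*$, any $\mu\in E^*$ can be written as $\gamma\alpha+(1-\gamma)\beta$ with $\alpha\in E_1$ and $\beta\in E_2$, which is precisely the sought decomposition \eqref{mudecomp}. So the whole content of the lemma is the uniqueness assertion.

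For uniqueness, I would start from two representations $\gamma\alpha+(1-\gamma)\beta=\gamma\alpha'+(1-\gamma)\beta'$ with $\alpha,\alpha'\in E_1$ and $\beta,\beta'\in E_2$, and regroup the terms to obtain
\[
\gamma\big((\alpha-\alpha')+(\beta'-\beta)\big)=\beta'-\beta.
\]
The crucial input is Assumption \ref{gamassm}: each member of $E_1$ (resp. $E_2$) is a finite sum of the rational numbers $\tilde\alpha_n$ (resp. $\tilde\beta_n$) together with a non-negative integer, hence is rational; consequently both sides of the displayed relation have rational coefficients when it is read as an affine equation in $\gamma$. If the coefficient $(\alpha-\alpha')+(\beta'-\beta)$ were nonzero, dividing would express $\gamma$ as a ratio of two rationals, contradicting the irrationality of $\gamma$. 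Therefore $(\alpha-\alpha')+(\beta'-\beta)=0$, and the same display then forces $\beta'-\beta=0$, i.e. $\beta=\beta'$; substituting back gives $\alpha=\alpha'$ as well. This proves uniqueness.

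I do not anticipate a genuine obstacle here; the only place needing care is the bookkeeping showing that elements of $E_1$ and $E_2$ are rational, which is exactly the role of the ``$\tilde\alpha_n,\tilde\beta_n$ rational'' half of Assumption \ref{gamassm}, while the ``$\gamma$ irrational'' half is what eliminates the cross term. It is worth noting that the argument in fact shows the decomposition $\mu=\gamma\alpha+(1-\gamma)\beta$ is unique among \emph{all} pairs of rationals $(\alpha,\beta)$, not merely those in $E_1\times E_2$, which may be convenient if later arguments need to compare indices arising from different sums.
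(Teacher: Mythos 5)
Your proof is correct and follows essentially the same route as the paper: existence from the inclusion \eqref{Ssubar}, and uniqueness by pitting the irrationality of $\gamma$ against the rationality of the elements of $E_1$ and $E_2$ guaranteed by Assumption \ref{gamassm}. The only difference is cosmetic — the paper isolates $\gamma/(1-\gamma)$ as a ratio of rationals, while you regroup to express $\gamma$ itself as one — and both arrangements are valid.
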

\begin{proof}
The existence of the decomposition \eqref{mudecomp} comes directly from  \eqref{Ssubar}.
We prove the uniqueness now. Let $\mu\in E^*$ and
suppose there are $(\alpha,\beta),(\alpha',\beta')\in E_1\times E_2 $ such that
$$\mu=\gamma\alpha+(1-\gamma)\beta=\gamma\alpha'+(1-\gamma)\beta'.$$
Then
$\gamma(\alpha-\alpha')=-(1-\gamma)(\beta-\beta').$
Note from this relation  that $\alpha=\alpha'$ if and only if $\beta=\beta'$.

Consider the case $\alpha\ne\alpha'$ and $\beta\ne \beta'$. Then
\beq\label{gamfrac}
\frac\gamma{1-\gamma}=-\frac{\beta-\beta'}{\alpha-\alpha'}.
\eeq

We have $\alpha-\alpha'=\sum_{j=1}^p \pm \tilde \alpha_{n_j} + k\ne 0$, and $\beta-\beta'=\sum_{j=1}^q \pm \tilde\beta_{k_j} + \ell\ne 0$, for some $k,\ell\in\Z$, and some particular $+/-$ signs in the two sums.

Since $\gamma$ is irrational, so is $\gamma/(1-\gamma)$, while the right-hand side of \eqref{gamfrac}  is rational, which yields  a contradiction. Therefore, we can only have $\alpha=\alpha'$ and $\beta=\beta'$. 
\end{proof}

We rewrite $S_*$ as
\beq\label{Snew}
S_*=\Big \{  \sum_{j=1}^p \tilde \lambda_{n_j} + \gamma k+ (1-\gamma)\ell :
 p,n_1,n_2,\ldots,n_p\in\N,\ (k,\ell)\in \N^2\cup (0,0)\Big  \}.
\eeq

Since $\tilde\lambda_n\to\infty$, it follows \eqref{Snew} that we can order $S_*$ to be a sequence $(\lambda_n)_{n=1}^\infty$ as in \eqref{museq}. 

Note that $\lambda_n\to\infty$, hence consequently, $\alpha_n+\beta_n\to\infty$.

\medskip
\noindent\textbf{The discrete system for expansions.} Let $\Psi=(\psi_n)_{n=1}^\infty$, where 
\beqs
\psi_n(t)=(t^\gamma+1)^{-\alpha_n}(t^{1-\gamma}+1)^{-\beta_n},
\eeqs
with  $(\alpha_n,\beta_n)$, thanks to Lemma \ref{unique},  being the unique pair in $E_1\times E_2$ such that
\beq\label{mu2}
\lambda_n=\gamma\alpha_n+(1-\gamma)\beta_n.
\eeq 

Clearly, 
$\psi_n(t)\Oeq t^{-\lambda_n}=\varphi_{\lambda_n}(t)$.
Hence, $\Psi$ and $\Phi$ satisfy condition \eqref{psph} in Definition \ref{dpremsys}.

\medskip
We still need to verify the remaining Conditions (i) and (ii) of Assumption \ref{dsysdef}.

\medskip
\noindent\textit{Verification of Condition {\rm(i)}.}
For $m,n \in \N$, we have
\beq\label{L1}
\psi_m(t)=(t^\gamma+1)^{-\alpha_m}(t^{1-\gamma}+1)^{-\beta_m},\quad \lambda_m=\gamma\alpha_m+(1-\gamma)\beta_m\in S_*,
\eeq
\beq\label{L2}
\psi_n(t)=(t^\gamma+1)^{-\alpha_n}(t^{1-\gamma}+1)^{-\beta_n},\quad \lambda_n=\gamma\alpha_n+(1-\gamma)\beta_n\in S_*,
\eeq
where $\alpha_m,\alpha_n\in E_1$ and $\beta_m,\beta_n\in E_2$.
Then
\beqs
(\psi_m\psi_n)(t)=(t^\gamma+1)^{-\alpha_m-\alpha_m}(t^{1-\gamma}+1)^{-\beta_n-\beta_n}.
\eeqs

Since $S_*$ preserves the addition we have $\lambda_m+\lambda_n\in S_*$, hence there exists $k$ such that
\beq \label{kmn}
\lambda_k=\lambda_m+\lambda_n.
\eeq 

By \eqref{kmn}, \eqref{L1} and \eqref{L2}, we have $\lambda_k=\gamma(\alpha_m+\alpha_n)+(1-\gamma)(\beta_m+\beta_n)$.
Because $\alpha_m+\alpha_n\in E_1$, $\beta_m+\beta_n\in E_2$, and by the uniqueness of the decomposition of $\lambda_k$,
we deduce
$\alpha_m+\alpha_n=\alpha_k$ and $\beta_n+\beta_m=\beta_k$.
Therefore,
\beqs
(\psi_m\psi_n)(t)=(t^\gamma+1)^{-\alpha_k}(t^{1-\gamma}+1)^{-\beta_k}=\psi_k(t).
\eeqs

This proves that \eqref{dtriple} of Assumption \ref{dsysdef} holds true with $d_{m,n}=1$ and $k=m\wedge n\in\N$ satisfying \eqref{kmn}.

Thus, Condition (i) of Assumption \ref{dsysdef} is met.

\medskip
\noindent\textit{Verification of Condition {\rm(ii)}.}
Using \eqref{psip}, we have, for $n\in\N$, $t>0$, 
\begin{align*}
 \psi_n'(t)
&= \sum_{k=1}^\infty \frac{ -\gamma \alpha_n} {(t^\gamma+1)^{ \alpha_n +1} (t^{1-\gamma}+1)^{\beta_n+k}}
 + \sum_{k=1}^\infty \frac{ -(1-\gamma) \beta_n} {(t^\gamma+1)^{\alpha_n+k}(t^{(1-\gamma}+1)^{ \beta_n +1} }\\
&=G_{1,n}(t)+G_{2,n}(t), 
\end{align*}
where
\beqs
G_{n,1}(t)=\sum_{k=1}^\infty  \frac{ -\gamma \alpha_n} {(t^\gamma+1)^{ \alpha_n +1} (t^{1-\gamma}+1)^{\beta_n+k}},
\eeqs
\beqs
G_{n,2}(t)=
\sum_{k=1}^\infty \frac{ -(1-\gamma) \beta_n} {(t^\gamma+1)^{\alpha_n+k}(t^{(1-\gamma}+1)^{ \beta_n +1} }.
\eeqs

Let $n\in\N$ be fixed momentarily. We apply Lemma \ref{lem710} to 
$$\varphi(t)=t^{-1},\ 
\bar \psi_k(t)=(t^\gamma+1)^{-\alpha_n-1} (t^{1-\gamma}+1)^{-\beta_n-k},\
T_*=1,$$
$$X=\R,\ \bar \xi_k=-\gamma\alpha_n,\  \bar\nu_k=\gamma(\alpha_n+1)+(1-\gamma)(\beta_n+k).$$

Note, for $t\ge 1$, that
$$  2^{-\alpha_n-\beta_n-k-1}t^{-\bar\nu_k}=(2t^\gamma)^{-\alpha_n-1} (2t^{1-\gamma})^{-\beta_n-k} 
\le \bar \psi_k(t)\le t^{-\bar\nu_k},$$
which yields
$$ D_k^{-1}\varphi(t)^{\bar\nu_k} \le \bar \psi_k(t)\le D_k \varphi(t)^{\bar\nu_k},$$
where $D_k= 2^{\alpha_n+\beta_n+k+1}$.
Taking  $c_0=\gamma\alpha_n$, $\kappa=1$, and $M=3^{1/(1-\gamma)}$, we have
\beqs
|\xi_k|\le c_0 \kappa^{\bar\nu_k},\quad \sum_{k=1}^\infty D_k M^{-\bar\nu_k}<\infty.
\eeqs

Since $(\bar\nu_k)_{k=1}^\infty$ is already strictly increasing, it is its own strictly increasing re-arrangement.

Then, by Lemma \ref{lem710},
\beq\label{G1}
G_{n,1}(t)\dsim{\Phi} \sum_{k=1}^\infty \bar\xi_k\bar\psi_k(t)= \sum_{k=1}^\infty \frac{ -\gamma \alpha_n} {(t^\gamma+1)^{ \alpha_n +1} (t^{1-\gamma}+1)^{\beta_n+k}}.
\eeq

Now, with 
$\hat\xi_k=-(1-\gamma) \beta_n$ replacing $\xi_k$,
$\hat\psi_k(t)=(t^\gamma+1)^{-\alpha_n-k}(t^{1-\gamma}+1)^{-\beta_n -1}$ replacing $\bar\psi_k(t)$,
and $\hat\nu_k=\gamma(\alpha_n+k)+(1-\gamma)(\beta_n+1)$ replacing $\bar\nu_k$,
we similarly obtain
\beq\label{G2}
G_{n,2}(t)\dsim{\Phi}\sum_{k=1}^\infty \hat\xi_k\hat\psi_k(t)
=\sum_{k=1}^\infty \frac{ -(1-\gamma) \beta_n} {(t^\gamma+1)^{\alpha_n+k}(t^{1-\gamma}+1)^{ \beta_n +1} }.
\eeq

Since $\bar\nu_k$ and $\hat\nu_k$ belong to $S_*$, all functions $\bar\psi_k(t)$ in \eqref{G1}, and $\hat\psi_k(t)$ in \eqref{G2} belong to the collection $\{\psi_n:n\in\N\}$.
Then we can rewrite 
\beqs
G_{n,i}(t)\dsim{\Phi} \sum_{k=1}^\infty \tilde c_{n,i,k}\psi_k(t),\quad\text{for } i=1,2,
\eeqs
for some constants $\tilde c_{n,i,k}$.
Therefore,
\beq\label{pc-nk}
\psi_n'(t)=G_{n,1}(t)+G_{n,2}(t)\dsim{\Phi} 
-\sum_{k=1}^\infty c_{n,k}\psi_k(t),
\eeq
where
\beq\label{newcnk}
c_{n,k}=-(\tilde c_{n,1,k}+\tilde c_{n,2,k})
=\gamma \sum_j\alpha_j +(1-\gamma)\sum_\ell\beta_\ell,
\eeq
with $\alpha_j+1=\alpha_k$, $\beta_j+p=\beta_k$, and 
$\alpha_\ell+q=\alpha_k$, $\beta_\ell+1=\beta_k$ for some $p,q\in \N\cup\{0\}$.

Note that these pairs $(j,p)$ and $(\ell,q)$ are only finitely many. 
Indeed, since 
$$\alpha_j+\beta_j+p+1=\alpha_k+\beta_k\text{ and }\alpha_j+\beta_j\to\infty,
$$
we have, for each fixed $k$, there are only finitely many $j$ and $p$.
The same arguments apply to  $(\ell,q)$. Therefore, the sums in \eqref{newcnk} are only finite ones.

With \eqref{pc-nk}, Condition (ii) of Assumption \ref{dsysdef} is met.

\medskip
\noindent\textit{Conclusion on Assumption \ref{dsysdef}.} We have checked that the systems $\Psi$ and $\Phi$ satisfy Assumption \ref{dsysdef}.

\medskip
We now return to the expansion \eqref{f-prod} for the force. Since $\tilde\lambda_n\in S_*$, we have that each function $(t^\gamma+1)^{-\tilde \alpha_n}(t^{1-\gamma}+1)^{-\tilde \beta_n}$ in the sum in \eqref{f-prod}
belongs to $\{\psi_n:n\in\N\}$.
Hence, we can rewrite \eqref{f-prod} as 
\beq\label{f-prod2}
f(t)\dsim{\Phi} \sum_{n=1}^\infty \phi_n \psi_n(t)
=\sum_{n=1}^{\infty} \frac{ \phi_n}{(t^\gamma+1)^{ \alpha_n}(t^{1-\gamma}+1)^{ \beta_n} } \quad \text{in }G_{\alpha,\sigma},
\eeq
for some sequence $(\phi_n)_{n=1}^\infty$ in $G_{\alpha,\sigma}$.

By applying Theorem \ref{discretethm}, we obtain the following result.

\begin{proposition}\label{productcase}
Assume \eqref{f-prod2}. Then any Leray-Hopf weak solution $u(t)$ of the NSE \eqref{fctnse} admits the asymptotic expansion
\beqs
u(t) \dsim{\Phi} \sum_{n=1}^{\infty} \xi_n\psi_n(t)=\sum_{n=1}^{\infty} \frac{ \xi_n}{(t^\gamma+1)^{ \alpha_n}(t^{1-\gamma}+1)^{ \beta_n} } \quad \text{in }G_{
\alpha +1-\rho,\sigma}\text{ for all } \rho \in (0,1),
 \eeqs
where 
\beqs 
\xi_1=A^{-1}\phi_1,\quad
\xi_n=A^{-1}\Big(\phi_n + \sum_{ p=1}^{n-1} c_{p,n} \xi_p - \sum_{\stackrel{1\le k,m\le n-1,}{\lambda_k+\lambda_m=\lambda_n}}B(\xi_k,\xi_m)\Big) \quad\text{for } n\ge 2,
\eeqs
with $c_{p,n}$ being defined in \eqref{newcnk}.
\end{proposition}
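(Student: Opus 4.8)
The plan is to obtain this proposition as a direct application of Theorem \ref{discretethm}, since everything needed to invoke that theorem has already been assembled in the discussion preceding the statement. First I would recall that the pair of systems $\Psi=(\psi_n)_{n=1}^\infty$, with $\psi_n(t)=(t^\gamma+1)^{-\alpha_n}(t^{1-\gamma}+1)^{-\beta_n}$, and the background $\Phi=(\varphi_\lambda)_{\lambda>0}$, with $\varphi_\lambda(t)=t^{-\lambda}$, satisfies the hypotheses of Definition \ref{dpremsys} through the relation $\psi_n(t)\Oeq t^{-\lambda_n}=\varphi_{\lambda_n}(t)$, and that it meets all three parts of Assumption \ref{dsysdef}: Condition (iii) holds because $\Phi$ is the power-decay P-system already verified in subsection \ref{ss1}; Condition (i) was checked with $d_{m,n}=1$ and $m\wedge n$ equal to the index $k$ determined by $\lambda_k=\lambda_m+\lambda_n$; and Condition (ii) was checked by differentiating $\psi_n$ through \eqref{psip}, applying Lemma \ref{lem710} to each of the two infinite sums $G_{n,1},G_{n,2}$, and re-indexing to reach the expansion \eqref{pc-nk}. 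Finally, the force hypothesis \eqref{f-prod2}, namely $f(t)\dsim{\Phi}\sum_{n=1}^\infty\phi_n\psi_n(t)$ in $G_{\alpha,\sigma}$, is exactly the input required by Theorem \ref{discretethm}.

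Given these facts, I would invoke Theorem \ref{discretethm} to conclude at once that every Leray-Hopf weak solution $u(t)$ of \eqref{fctnse} satisfies $u(t)\dsim{\Phi}\sum_{n=1}^\infty\xi_n\psi_n(t)$ in $G_{\alpha+1-\rho,\sigma}$ for all $\rho\in(0,1)$, with $\xi_n$ given by the abstract recursion \eqref{dxin}. It then remains only to rewrite \eqref{dxin} in the concrete form stated in the proposition. For the bilinear part I would substitute $d_{k,m}=1$ and use the characterization of $\wedge$ from the verification of Condition (i), so that $\sum_{1\le k,m\le n-1,\ k\wedge m=n}d_{k,m}B(\xi_k,\xi_m)$ becomes $\sum_{1\le k,m\le n-1,\ \lambda_k+\lambda_m=\lambda_n}B(\xi_k,\xi_m)$. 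For the term $\chi_n=\sum_{p=1}^{n-1}c_{p,n}\xi_p$ in \eqref{dxin}, where $c_{p,n}$ there denotes the coefficient of $\psi_n$ in the expansion of $\psi_p'$ furnished by Assumption \ref{dsysdef}(ii), I would observe that by \eqref{pc-nk} this coefficient equals $-c_{p,n}$ with $c_{p,n}$ now understood as in \eqref{newcnk}; hence $-\chi_n=\sum_{p=1}^{n-1}c_{p,n}\xi_p$, which accounts for the plus sign in the displayed recursion. Assembling these substitutions produces precisely the claimed formula for $\xi_n$, while $\xi_1=A^{-1}\phi_1$ carries over directly.

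I do not anticipate a genuine obstacle: the substantive work, namely showing that $(\Psi,\Phi)$ satisfies Assumption \ref{dsysdef} and in particular deriving the infinite-sum expansion \eqref{pc-nk} of $\psi_n'$ via Lemma \ref{lem710}, has already been carried out, so Theorem \ref{discretethm} may be used as a black box. The single point that deserves a careful second look is the bookkeeping of signs and of the index correspondence when matching the abstract data $d_{k,m}$ and $c_{p,k}$ against the concrete values $1$ and \eqref{newcnk}; in particular I would re-confirm that the finiteness argument behind \eqref{newcnk}, that there are only finitely many admissible pairs $(j,p)$ and $(\ell,q)$ because $\alpha_j+\beta_j\to\infty$, is exactly what guarantees that $\chi_n$ reduces to the finite sum $\sum_{p=1}^{n-1}c_{p,n}\xi_p$ written in the statement.
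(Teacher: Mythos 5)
Your proposal is correct and follows exactly the route the paper takes: Proposition \ref{productcase} is stated there as an immediate consequence of Theorem \ref{discretethm}, relying on the verification of Assumption \ref{dsysdef} for the pair $(\Psi,\Phi)$ carried out just before the statement. Your translation of the abstract recursion \eqref{dxin} into the concrete formula, including the sign flip coming from \eqref{pc-nk} (so that $-\chi_n=+\sum_{p=1}^{n-1}c_{p,n}\xi_p$) and the identification $d_{k,m}=1$, $k\wedge m=n\Leftrightarrow\lambda_k+\lambda_m=\lambda_n$, matches the paper's intent.
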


\begin{remark}\label{same2}
This is a counterpart of Remark \ref{sameexp}, but applied to the expansion \eqref{f-prod}.

Assume $(\tilde\alpha_n)_{n=1}^\infty$ and $(\tilde\beta_n)_{n=1}^\infty$ are sequences of non-negative numbers such that
\begin{enumerate}[label={\rm(\alph*)}]
 \item  Property \eqref{lamtil} holds true,
 \item For each $n\in\N$, the right-hand side of \eqref{lamtil} is the unique decomposition among $\gamma \tilde\alpha_k+(1-\gamma) \tilde\beta_j$ for $k,j\in\N$,
 \item Each set
$E_{*,1}=\{\tilde\alpha_n:n\in\N\}$, $E_{*,2}=\{\tilde\beta_n:n\in\N\}$
preserves the addition, and the increments by $1$,
 \item The set 
$E_*\eqdef \{\tilde\lambda_n:n\in\N\}$ is equal to $\bar E\eqdef \{\gamma \tilde\alpha_k+(1-\gamma) \tilde\beta_j:k,j\in\N\}$.
\end{enumerate}


Define
 $E = \Big\{\gamma(\tilde\alpha_n+k)+(1-\gamma)(\tilde\beta_n+j):n\in\N,\ k,j\in\N\cup\{0\}\Big\}$. 

 By the preservation of $E_{*,1}$ and $E_{*,2}$ in (c), we have $\tilde\alpha_n+k\in E_{*,1}$ and $\tilde\beta_n+k\in E_{*,2}$.
Then   $E_*\subset E\subset \bar E=E_*$, which implies $E_*=E$. 
 
Let $S_*$ be defined by \eqref{Ss1}. We have $E_*\subset S_*\subset E=E_*$.
Hence, $S_*=E_*$, which, by \eqref{lamtil}, is already ordered by $(\tilde\lambda_n)_{n=1}^\infty$.
By this and \eqref{museq}, \eqref{mu2}, we have $\lambda_n=\tilde\lambda_n$, $\alpha_n=\tilde \alpha_n$, $\beta_n=\tilde \beta_n$, and 
$\psi_n=\tilde\psi_n$. Therefore, the expansion \eqref{f-prod2} is the original \eqref{f-prod}.
\end{remark}

\appendix
\section{Appendix}\label{apA}
One way to generate an infinite expansion of the type  \eqref{g0} is to start with a function as a convergent series in \eqref{gequal}.
We give a criterion for such a conversion.

\begin{lemma}\label{power-asym}
Let $\varphi$ be a positive function defined on $[T_*,\infty)$ for some $T_*\ge 0$, and $\varphi(t)\to\infty$ as $t\to\infty$.

Let $(\lambda_n)_{n=1}^\infty$ be a strictly increasing, divergent sequence of positive numbers and there exists a number $M>1$ such that 
\beq\label{ML}
\sum_{n=1}^\infty M^{-\lambda_n}<\infty.
\eeq

Let $(X, \|\cdot\|)$ be a  Banach space 
 and let $(\xi_n)_{n=1}^\infty \subset X$ satisfy
\beq \label{xin-bdd}
\|\xi_n\|  \le c_0\kappa^{\lambda_n} \quad\forall n\in\N,
\eeq
for some positive constants $c_0$ and  $\kappa$.

Then the series $\sum_{n=1}^\infty \xi_n \varphi(t)^{\lambda_n}$ converges absolutely and uniformly to a function $g(t)$ on $[T_0,\infty)$ for some $T_0 \ge T_*$, and 
$g$ has the expansion 
\beq\label{gex}
g(t)\simP \sum_{n=1}^\infty  \xi_n \varphi(t)^{\lambda_n} \text{ in }X.
\eeq
\end{lemma}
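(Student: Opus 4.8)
The plan is a short two-step argument. First I would produce the sum $g$ and the tail interval $[T_0,\infty)$ by a Weierstrass majorant, and then read off the asymptotic expansion \eqref{gex} directly from Definition \ref{premsys}. Throughout set $\psi_\lambda(t)=\varphi(t)^\lambda$, so $\Psi=(\psi_\lambda)_{\lambda>0}$ is the P-system occurring in \eqref{gex}; by Property (P) it satisfies (a) and (b) of Definition \ref{premsys}, and in particular $\psi_\lambda=\psi_\mu\,\psi_{\lambda-\mu}$ for $\lambda>\mu$, a factorization that will pin down the remainder rate.

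The first step is the majorant. Fix $T_0\ge T_*$ large enough that $\kappa\,\varphi(t)\le 1/M$ for all $t\ge T_0$; this is possible since $\varphi(t)\to 0$ as $t\to\infty$ while $M>1$, $\kappa>0$ are fixed. Then, for $t\ge T_0$ and every $n\in\N$, hypothesis \eqref{xin-bdd} gives
\[ \|\xi_n\varphi(t)^{\lambda_n}\|\le c_0\kappa^{\lambda_n}\varphi(t)^{\lambda_n}=c_0\bigl(\kappa\varphi(t)\bigr)^{\lambda_n}\le c_0 M^{-\lambda_n}, \]
and $\sum_{n\ge 1}c_0 M^{-\lambda_n}<\infty$ by \eqref{ML}. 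Since $X$ is a Banach space, the Weierstrass $M$-test shows that $\sum_{n\ge 1}\xi_n\varphi(t)^{\lambda_n}$ converges absolutely, and uniformly on $[T_0,\infty)$, to a function $g$.

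The second step is the tail estimate, which is where the P-system structure enters. Fix $N\in\N$ and put $\varep=\lambda_{N+1}-\lambda_N>0$. For $t\ge T_0$, with $s:=\kappa\varphi(t)\in(0,1/M]$, the inequality $s^{\lambda_n}=s^{\lambda_{N+1}}s^{\lambda_n-\lambda_{N+1}}\le s^{\lambda_{N+1}}M^{\lambda_{N+1}}M^{-\lambda_n}$, valid for $n\ge N+1$ because $0<s\le 1/M$ and $\lambda_n\ge\lambda_{N+1}$, yields
\[ \Bigl\|g(t)-\sum_{n=1}^{N}\xi_n\varphi(t)^{\lambda_n}\Bigr\|=\Bigl\|\sum_{n>N}\xi_n\varphi(t)^{\lambda_n}\Bigr\|\le c_0 M^{\lambda_{N+1}}\Bigl(\sum_{n\ge 1}M^{-\lambda_n}\Bigr)\bigl(\kappa\varphi(t)\bigr)^{\lambda_{N+1}}. \]
Hence the left-hand side is $\mathcal O\bigl(\varphi(t)^{\lambda_{N+1}}\bigr)=\mathcal O\bigl(\psi_{\lambda_N}(t)\,\psi_\varep(t)\bigr)$, which is precisely the remainder bound \eqref{lamrate} demanded by Definition \ref{premsys}. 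As $(\lambda_n)_{n\ge 1}$ is strictly increasing and divergent and each $\xi_n\in X$, this is exactly the expansion \eqref{gex}.

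I do not expect a real obstacle; the two points that need care are (i) the exponent bookkeeping, ensuring the tail is bounded by the product $\psi_{\lambda_N}\psi_\varep$ with the explicit gap $\varep=\lambda_{N+1}-\lambda_N$ rather than by a mere $o(\psi_{\lambda_N})$, and (ii) that the single $T_0$ of step one works for every $N$ at once, since the geometric majorant $\sum_{n\ge 1}M^{-\lambda_n}$ is uniform in $N$. If only the weaker expansion of Definition \ref{sys0}, namely $g(t)\sim\sum_{n\ge 1}\xi_n\varphi(t)^{\lambda_n}$, were wanted, the same tail estimate gives $o(\psi_{\lambda_N}(t))$ immediately from $\lambda_{N+1}>\lambda_N$, and its uniqueness would then follow from Proposition \ref{Prop62} once one checks each $\varphi^{\lambda_n}$ is asymptotically nontrivial.
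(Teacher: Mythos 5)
Your proof is correct and follows essentially the same route as the paper's: Weierstrass majorant $c_0M^{-\lambda_n}$ for uniform convergence on $[T_0,\infty)$, then a tail bound of order $\varphi(t)^{\lambda_{N+1}}=\psi_{\lambda_N}(t)\psi_{\lambda_{N+1}-\lambda_N}(t)$ to verify \eqref{lamrate}. The only (harmless) difference is that you reuse \eqref{xin-bdd} directly in the tail estimate, whereas the paper re-derives a bound $\|\xi_n\|\le c_1\varphi(T_0)^{-\lambda_n}$ from convergence at $T_0$ and introduces a second threshold $T_1$; both of you also correctly read the hypothesis as $\varphi(t)\to 0$, which is what the lemma's statement intends.
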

\begin{proof}
Since  $\varphi(t) \to 0 $ as $t\to\infty$, there exists $T_0\ge T_*$ such that  for all $ t \ge T_0$,
\beq \label{vphi-bdd}
\varphi(t) \le \frac1{M\kappa}.
\eeq

Combining \eqref{xin-bdd} and \eqref{vphi-bdd} yields, for all $n \in \N$,
\beqs
\sup_{[T_0,\infty)}\|\xi_n \varphi(t)^{\lambda_n}\| \le c_0 M^{-\lambda_n}.
\eeqs

This and \eqref{ML} imply that $\sum_{n=1}^\infty \xi_n \varphi(t)^{\lambda_n}$ converges absolutely and uniformly on $[T_0,\infty)$, with $g(t)=\sum_{n=1}^\infty \xi_n \varphi(t)^{\lambda_n}$ being its limit function. 

It remains to prove the expansion \eqref{gex}.
We note,  by the convergence of the series $\sum_{n=1}^\infty \xi_n \varphi(T_0)^{\lambda_n}$, that 
\beqs
\sup_{n\in\N} \|\xi_n\| \varphi(T_0)^{\lambda_n} = c_1<\infty,
\eeqs
which implies 
\beq \label{xin-bdd2}
\|\xi_n\|  \le \frac{c_1}{\varphi(T_0)^{\lambda_n}}\quad\forall n\in\N.
\eeq

Again, since $\varphi(t) \to 0 $ as $t\to\infty$, there exists $T_1\ge T_0$ such that for all $ t \ge T_1$,
\beq \label{vphi-Tnot}
\frac{\varphi(t)}{\varphi(T_0)} \le \frac{1}{M}.
\eeq

Using \eqref{xin-bdd2} and \eqref{vphi-Tnot}, we estimate, for all $t \ge T_1$,
\begin{align*}
\Big\|g(t)-\sum_{n=1}^N \xi_n \varphi(t)^{\lambda_n}\Big\|
&=\Big\| \sum_{n=N+1}^\infty \xi_n \varphi(t)^{\lambda_n}\Big\| \\
&\le c_1 \sum_{n=N+1}^\infty \frac{ \varphi(t)^{\lambda_n}}{\varphi(T_0)^{\lambda_n}} 
=  c_1  \,\frac{ \varphi(t)^{\lambda_{N+1}}}{\varphi(T_0)^{\lambda_{N+1}}} \sum_{n=N+1}^\infty \left(\frac{ \varphi(t)}{\varphi(T_0)}\right)^{\lambda_n-\lambda_{N+1}}\\
&\le  c_1  \,\frac{ \varphi(t)^{\lambda_{N+1}}}{\varphi(T_0)^{\lambda_{N+1}}} \sum_{n=N+1}^\infty M^{-\lambda_n+\lambda_{N+1}}
\le C_N\varphi(t)^{\lambda_{N+1}},
\end{align*}
where $C_N=c_1  (M/\varphi(T_0))^{\lambda_{N+1}} \sum_{n=1}^\infty M^{-\lambda_n}<\infty$.
Therefore, we obtain the expansion \eqref{gex}, according to Definition \ref{premsys} with $\psi_\lambda=\varphi^\lambda$.
\end{proof}

We extend Lemma \ref{power-asym} to cover the expansions with a background system such as those in Section \ref{discrtsec}.

\begin{lemma}\label{lem710}
Let $\varphi(t)$ and $\psi_n(t)$, for $n\in\N$, be positive functions defined on $[T_*,\infty)$ for some $T_*\ge 0$ that tend to zero as $t\to \infty$. Assume, for each $n\in\N$,  
there exist a numbers $D_n\ge 1$ such that 
\beq\label{A8}
D_n^{-1}\varphi(t)^{\lambda_n}\le \psi_n(t) \le D_n\varphi(t)^{\lambda_n}\quad\forall t\ge T_*,
\eeq
where $(\lambda_n)_{n=1}^\infty$ is a sequence of positive numbers and $\lambda_n\to\infty$ as $n\to\infty$.
Assume further that there exists $M>0$ such that 
\beq\label{DnM}
\sum_{n=1}^\infty D_n M^{-\lambda_n}<\infty.
\eeq 

Let $(X,\|\cdot\|)$ be a Banach space, and  $(\xi_n)_{n=1}^\infty$ be a sequence in $X$ such that \eqref{xin-bdd} holds. 

{\rm(i)} Then the series in $\sum_{n=1}^\infty \xi_n \psi_n(t)$ converges absolutely and uniformly on $[T_0,\infty)$ for some $T_0\ge T_*$.
Define
\beq\label{f3sum}
f(t)=\sum_{n=1}^\infty \xi_n \psi_n(t)\in X \quad\forall t\ge T_0.
\eeq

{\rm(ii)} Assume the mapping $n\mapsto\lambda_n$ is one-to-one. 
Let $(\mu_n)_{n=1}^\infty$ be the strictly increasing re-arrangement of $(\lambda_n)_{n=1}^\infty$.
Define $\psi_n^*=\psi_k$  and $\xi_n^*=\xi_k$ with $\mu_n=\lambda_k$. Let  $\Phi=(\varphi(t)^{-\lambda})_{\lambda>0}$.  Then
\beq\label{f3ex}
f(t)\dsim{\Phi}\sum_{n=1}^\infty \xi_n^* \psi_n^*(t).
\eeq
\end{lemma}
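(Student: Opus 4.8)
The plan is to mimic the proof of Lemma \ref{power-asym}, inserting the two‑sided comparison \eqref{A8} between $\psi_n$ and $\varphi^{\lambda_n}$ wherever that proof uses a power $\varphi^{\lambda_n}$ directly, and to handle the re‑arrangement by the standard invariance of absolutely convergent series in a Banach space. For part (i), since $\varphi(t)\to 0$ I first pick $T_0\ge T_*$ with $\kappa\,\varphi(t)\le 1/M$ for all $t\ge T_0$. Combining \eqref{xin-bdd} with the upper bound in \eqref{A8} gives, for every $n\in\N$ and $t\ge T_0$,
\[
\|\xi_n\psi_n(t)\|\le c_0\kappa^{\lambda_n}D_n\varphi(t)^{\lambda_n}=c_0 D_n\big(\kappa\varphi(t)\big)^{\lambda_n}\le c_0 D_n M^{-\lambda_n},
\]
so $\sup_{t\ge T_0}\|\xi_n\psi_n(t)\|\le c_0 D_n M^{-\lambda_n}$; then \eqref{DnM} and the Weierstrass $M$-test yield absolute and uniform convergence of $\sum_n\xi_n\psi_n(t)$ on $[T_0,\infty)$, which defines $f$ as in \eqref{f3sum}.

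For part (ii), I first record that since $n\mapsto\lambda_n$ is one-to-one and $\lambda_n\to\infty$, each interval $[0,C]$ contains only finitely many $\lambda_n$'s, so the increasing re-arrangement $(\mu_n)_{n=1}^\infty$ is strictly increasing and divergent, and the starred data $(\xi_n^*,\psi_n^*,D_n^*)$ — with $D_n^*=D_k$ whenever $\mu_n=\lambda_k$ — inherit \eqref{A8} and \eqref{xin-bdd} with $\lambda_n$ replaced by $\mu_n$; in particular $\psi_n^*(t)\Oeq\varphi(t)^{\mu_n}=\varphi_{\mu_n}(t)$, where $\varphi_\lambda=\varphi^\lambda$ is a P-system, hence a legitimate background system in the sense of Definition \ref{dpremsys}. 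Since the series in \eqref{f3sum} converges absolutely and $\sum_k D_k M^{-\lambda_k}<\infty$, re-arrangement gives both $f(t)=\sum_n\xi_n^*\psi_n^*(t)$ and $\sum_n D_n^* M^{-\mu_n}=\sum_k D_k M^{-\lambda_k}<\infty$. With the same $T_0$ as in (i) (so $\kappa\varphi(t)\le 1/M<1$ for $t\ge T_0$), for any $N\ge 1$ and $t\ge T_0$ I estimate the tail exactly as in Lemma \ref{power-asym}:
\[
\Big\|f(t)-\sum_{n=1}^N\xi_n^*\psi_n^*(t)\Big\|
\le\sum_{n=N+1}^\infty c_0 D_n^*\big(\kappa\varphi(t)\big)^{\mu_n}
\le c_0\big(\kappa\varphi(t)\big)^{\mu_{N+1}}\sum_{n=N+1}^\infty D_n^* M^{-(\mu_n-\mu_{N+1})},
\]
and the last sum is at most $M^{\mu_{N+1}}\sum_k D_k M^{-\lambda_k}<\infty$, whence
\[
\Big\|f(t)-\sum_{n=1}^N\xi_n^*\psi_n^*(t)\Big\|\le C_N\,\varphi(t)^{\mu_{N+1}}\qquad\forall t\ge T_0,
\]
with $C_N=c_0(\kappa M)^{\mu_{N+1}}\sum_k D_k M^{-\lambda_k}$. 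Since $\mu_{N+1}>\mu_N$, this is precisely the remainder bound \eqref{Pfin}/\eqref{Prate} required in Definition \ref{dpremsys} — invoking remark (b) following that definition to pass from $\mathcal O(\varphi_{\mu_{N+1}}(t))$ to the stated form — so \eqref{f3ex} follows.

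I do not anticipate a genuine difficulty: the whole argument is a Weierstrass $M$-test followed by a geometric-tail estimate, just as in Lemma \ref{power-asym}. The only point requiring care is the bookkeeping around the re-arrangement — verifying that $(\mu_n)$ is strictly increasing and divergent, that the re-arranged series of norms (and of the constants $D_n^*M^{-\mu_n}$) stays summable by non-negativity of the terms, and that the hypotheses \eqref{A8}--\eqref{xin-bdd} are inherited by the starred quantities — together with noting that a single threshold $T_0\ge T_*$ suffices for both the convergence in (i) and the tail estimate in (ii).
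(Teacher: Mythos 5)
Your proof is correct and follows essentially the same route as the paper's: a Weierstrass $M$-test for (i), rearrangement of the absolutely convergent series, and a geometric tail estimate against $\varphi(t)^{\mu_{N+1}}$ for (ii). The only (harmless) difference is that you bound $\|\xi_n^*\|\le c_0\kappa^{\mu_n}$ directly from \eqref{xin-bdd}, so a single threshold $T_0$ suffices, whereas the paper re-derives $\|\xi_n^*\|\le c_1 D_n^*\varphi(T_0)^{-\mu_n}$ from convergence at $T_0$ and consequently needs a second threshold $T_1$ together with the bound $\sum (D_n^*)^2 M^{-2\mu_n}\le\big(\sum D_n^* M^{-\mu_n}\big)^2$.
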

\begin{proof}
(i) There is $T_0\ge T_*$ such that 
$\varphi(t)\le 1/\kappa M$ for all $t\ge T_0$.
Then for all $t\ge T_0$, by \eqref{xin-bdd}, \eqref{A8},  and \eqref{DnM},
 \begin{align*}
  \sum_{n=1}^\infty \|\xi_n\| \psi_n(t)
  &\le \sum_{n=1}^\infty \|\xi_n\| D_n\varphi(t)^{\lambda_n}
  \le \sum_{n=1}^\infty c_0 D_n M^{-\lambda_n}<\infty.
 \end{align*}
Therefore, we obtain the absolute and uniform convergence on $[T_0,\infty)$.

(ii) Let $D^*_n=D_k$ with $\mu_n=\lambda_k$. After the re-arrangement we still have 
\beq\label{DkM} 
\sum_{n=1}^\infty D_n^* M^{-\mu_n}=\sum_{k=1}^\infty D_k M^{-\lambda_k}<\infty,
\eeq  
and, because of the absolute convergence, 
\beq\label{f3arr}
f(t)=\sum_{n=1}^\infty \xi_n^* \psi_n^*(t) \text{ for }t\ge T_0.
\eeq
 
For each $n\in\N$, let $k\in\N$ such that $\mu_n=\lambda_k$, then we have 
 $$(D_n^*)^{-1}\varphi^{\mu_n}=D_k^{-1}\varphi^{\lambda_k}\le \psi_n^*=\psi_k\le D_k\varphi^{\lambda_k}=D^*_n\varphi^{\mu_n}.$$ 
 
The convergence of $f(T_0)$ in \eqref{f3arr} implies that there exists $c_1>0$ such that 
 \beqs
  \|\xi_n^*\|\le c_1 \psi_n^*(T_0)^{-1}\le c_1 D_n^* \varphi(T_0)^{-\mu_n}\quad \forall n\in\N. 
 \eeqs

 Let $T_1\ge T_0$ such that $\varphi(t)/\varphi(T_0)\le 1/M^2$ for all $t\ge T_1$. 
Then, for $t\ge T_1$,
\begin{align*}
\Big\|f(t)-\sum_{n=1}^N \xi_n^* \psi_n^*(t)\Big\|
&=\Big\| \sum_{n=N+1}^\infty \xi_n^* \psi_n^*(t)\Big\|
\le  \sum_{n=N+1}^\infty  c_1 D_n^* \varphi(T_0)^{-\mu_n}\cdot D_n^*\varphi(t)^{\mu_n}\\
&\le c_1 (\varphi(t)/\varphi(T_0))^{\mu_{N+1}}\sum_{n=N+1}^\infty  (D_n^*)^2(\varphi(t)/\varphi(T_0))^{\mu_n-\mu_{N+1}}\\
&\le c_1 (\varphi(t)/\varphi(T_0))^{\mu_{N+1}}\sum_{n=N+1}^\infty  (D_n^*)^2M^{-2\mu_n+2\mu_{N+1}}\\
&\le c_1 (M^2\varphi(t)/\varphi(T_0))^{\mu_{N+1}}\Big(\sum_{n=N+1}^\infty  D_n^* M^{-\mu_n}\Big)^2
\le C\varphi(t)^{\mu_{N+1}},
\end{align*}
where, thanks to \eqref{DkM}, $C$ is a positive number.
Since $\mu_{N+1}>\mu_N$, we obtain \eqref{f3ex}.
\end{proof}

We emphasize that the sum in \eqref{f3sum} must be re-arranged to have a meaningful expansion as in \eqref{f3ex}.
In particular cases, Lemma \ref{lem710} is used to obtain expansions when $\psi_n$ is generated by two functions with two different sequences of powers.

\begin{corollary}\label{cor79}
Suppose $\zeta(t)$, $\vartheta(t)$ and $\varphi(t)$ are three positive functions defined on $[T_*,\infty)$ for some $T_*\ge 0$ that tend to zero as $t\to \infty$,  and there exist numbers $D\ge 1$, $s_1,s_2>0$ such that 
\beq\label{3rel}
D^{-1}\varphi(t)^{s_1}\le \zeta(t)\le D\varphi(t)^{s_1},\quad 
D^{-1}\varphi(t)^{s_2}\le \vartheta(t)\le D\varphi(t)^{s_2}
\quad\forall t\ge T_*.
\eeq

Let  $(\alpha_n)_{n=1}^\infty$ and $(\beta_n)_{n=1}^\infty$ be two sequences of non-negative numbers such that $\alpha_n+\beta_n\to\infty$. 

Define $\lambda_n=s_1 \alpha_n+s_2\beta_n$ for $n\in\N$.
Let $(X,\|\cdot\|)$ be a Banach space, and  $(\xi_n)_{n=1}^\infty$ be a sequence in $X$.
Assume  \eqref{ML} and \eqref{xin-bdd}. 

{\rm(i)}
Then the series in $\sum_{n=1}^\infty \xi_n \zeta(t)^{\alpha_n}\vartheta(t)^{\beta_n}$ converges absolutely and uniformly on $[T_0,\infty)$ for some $T_0\ge T_*$.
Define
\beqs
f(t)=\sum_{n=1}^\infty \xi_n \zeta(t)^{\alpha_n}\vartheta(t)^{\beta_n} \quad\forall t\ge T_0.
\eeqs

{\rm(ii)} Suppose the mapping $n\mapsto \lambda_n$ is one-to-one. 
Let $(\mu_n)_{n=1}^\infty$ be the strictly increasing re-arrangement of $(\lambda_n)_{n=1}^\infty$.
Define $\psi_n^*=\zeta^{\alpha_k}\vartheta^{\beta_k}$ and $\xi_n^*=\xi_k$ with 
$\mu_n=\lambda_k$.

Let $\Phi=(\varphi(t)^{-\lambda})_{\lambda>0}$.  Then
\beqs
f(t)\dsim{\Phi}\sum_{n=1}^\infty \xi_n^* \psi_n^*(t).
\eeqs
\end{corollary}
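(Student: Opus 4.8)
The plan is to obtain Corollary \ref{cor79} as an immediate consequence of Lemma \ref{lem710} once the auxiliary data are correctly identified. I would set $\psi_n(t)=\zeta(t)^{\alpha_n}\vartheta(t)^{\beta_n}$ for $n\in\N$, and keep $\varphi$ and $\lambda_n=s_1\alpha_n+s_2\beta_n$ exactly as in the statement. The first step is to verify the two-sided comparison \eqref{A8}: since $\alpha_n,\beta_n\ge 0$ and all the functions in \eqref{3rel} are positive, raising the inequalities $D^{-1}\varphi^{s_1}\le\zeta\le D\varphi^{s_1}$ and $D^{-1}\varphi^{s_2}\le\vartheta\le D\varphi^{s_2}$ to the powers $\alpha_n$ and $\beta_n$ respectively and multiplying them gives
\[
D^{-(\alpha_n+\beta_n)}\varphi(t)^{s_1\alpha_n+s_2\beta_n}\le\psi_n(t)\le D^{\alpha_n+\beta_n}\varphi(t)^{s_1\alpha_n+s_2\beta_n}\qquad\forall t\ge T_*,
\]
which is \eqref{A8} with $D_n=D^{\alpha_n+\beta_n}\ge 1$. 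Since $\zeta$ and $\vartheta$ tend to zero (hence are bounded near infinity) and the exponents are non-negative, each $\psi_n$ is positive on $[T_*,\infty)$ and tends to $0$ as $t\to\infty$, as required by Lemma \ref{lem710}; moreover $\lambda_n\ge \min\{s_1,s_2\}(\alpha_n+\beta_n)\to\infty$.

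The only genuinely new estimate is the summability condition \eqref{DnM}. Write $s=\min\{s_1,s_2\}>0$; then $\alpha_n+\beta_n\le\lambda_n/s$, so, because $D\ge 1$, $D_n=D^{\alpha_n+\beta_n}\le D^{\lambda_n/s}$. Choosing the enlarged constant $M'=M\,D^{1/s}$ we get
\[
\sum_{n=1}^\infty D_n\,(M')^{-\lambda_n}\le\sum_{n=1}^\infty\bigl(D^{1/s}/M'\bigr)^{\lambda_n}=\sum_{n=1}^\infty M^{-\lambda_n}<\infty
\]
by hypothesis \eqref{ML}, which is precisely \eqref{DnM} with $M'$ in place of $M$. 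The bound \eqref{xin-bdd} on the coefficients $\xi_n$ is assumed verbatim and is carried over unchanged (note that $\|\xi_n\|\le c_0\kappa^{\lambda_n}$ is stated for exactly the same $\lambda_n$).

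With \eqref{A8}, \eqref{DnM} (for $M'$), and \eqref{xin-bdd} verified, part (i) follows directly from Lemma \ref{lem710}(i): the series $\sum_{n}\xi_n\psi_n(t)=\sum_{n}\xi_n\zeta(t)^{\alpha_n}\vartheta(t)^{\beta_n}$ converges absolutely and uniformly on some $[T_0,\infty)$ to $f(t)$. For part (ii), the assumption that $n\mapsto\lambda_n$ is one-to-one lets us apply Lemma \ref{lem710}(ii): taking $(\mu_n)_{n=1}^\infty$ to be the increasing rearrangement of $(\lambda_n)$ and $\psi_n^{*}=\psi_k=\zeta^{\alpha_k}\vartheta^{\beta_k}$, $\xi_n^{*}=\xi_k$ whenever $\mu_n=\lambda_k$, we obtain $f(t)\dsim{\Phi}\sum_{n}\xi_n^{*}\psi_n^{*}(t)$ with $\Phi=(\varphi(t)^{-\lambda})_{\lambda>0}$, which is exactly the claimed expansion. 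I do not anticipate any serious obstacle; the only point requiring a moment's care is the passage from \eqref{ML} to \eqref{DnM}, i.e. controlling $D_n=D^{\alpha_n+\beta_n}$ by a power of $\lambda_n$ so that it can be absorbed by enlarging $M$, together with the routine remark that $\psi_n$ inherits its decay and comparison properties from $\zeta$ and $\vartheta$.
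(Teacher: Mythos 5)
Your proposal is correct and follows essentially the same route as the paper: both reduce the corollary to Lemma \ref{lem710} by setting $\psi_n=\zeta^{\alpha_n}\vartheta^{\beta_n}$, $D_n=D^{\alpha_n+\beta_n}$, and absorbing $D_n$ by enlarging $M$ to $MD^{1/\min\{s_1,s_2\}}$ (the paper writes this as $MD^s$ with $s=\max\{1/s_1,1/s_2\}$, which is the same constant). No gaps.
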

\begin{proof}
Let $\psi_n(t)=\zeta(t)^{\alpha_n}\vartheta(t)^{\beta_n}$ and $D_n=D^{\alpha_n+\beta_n}$.
 Thanks to \eqref{3rel}, we have $$D_n^{-1}\varphi^{\lambda_n}=D^{-(\alpha_n+\beta_n)}\varphi^{\lambda_n}\le \psi_n\le D^{\alpha_n+\beta_n}\varphi^{\lambda_n}=D_n\varphi^{\lambda_n}.$$
 
Note that $\lambda_n\to\infty$. 
Denote $s=\max\{1/s_1,1/s_2\}$. Then 
\beqs
 \sum_{n=1}^\infty D_n (M D^s)^{-\lambda_n}= \sum_{n=1}^\infty M^{-\lambda_n} D^{(1- s_1 s)\alpha_n+(1- s_2 s )\beta_n}
 \le \sum_{n=1}^\infty M^{-\lambda_n} <\infty.
 \eeqs
Hence, \eqref{DnM} holds true with $M:=MD^s$.
Applying Lemma \ref{lem710}, we obtain the desired statements in (i) and (ii).
\end{proof}

\def\cprime{$'$}\def\cprime{$'$} \def\cprime{$'$}

\end{document}